 \ifpdf\setlength{\pdfpagewidth}{8.5in}\setlength{\pdfpageheight}{11in}\fi
\newcommand{\ignore}[1]{}
\newcommand{\signore}[1]{}
\newcommand{\notinproc}[1]{}
\newtheorem{thm}{Theorem}[section]
\newtheorem{theorem}{Theorem}[section]
\newtheorem{lemma}[thm]{Lemma}
\newtheorem{definition}[thm]{Definition}
 \newcommand{\qed}{\hfill \rule{1ex}{1ex}\medskip\\}
 \newenvironment{proof}{\paragraph{Proof}}{\qed}
\def\range{\mbox{{\sc rg}}}
\def\cl{\mbox{cl}}
\def\var{\mbox{\sc var}}
\def\E{{\textsf E}}
\def\vecv{\boldsymbol{v}}
\def\vecz{\boldsymbol{z}}
\def\vecw{\boldsymbol{w}}
\def\vecy{\boldsymbol{y}}
\def\p{p}
\def\L{L*}
\def\U{U*}
\def\1DE{Monotone Estimation}
\begin{document}

 \title{Estimation for Monotone Sampling:\\
  Competitiveness and Customization}

\date{}

 \ignore{
\numberofauthors{1}
\author{
\alignauthor Edith Cohen\\
       \affaddr{Microsoft Research SVC}\\
       \affaddr{Mountain View, CA, USA}\\
       \email{editco@microsoft.com}
}
 }

\author{
Edith Cohen\thanks{Microsoft Research, Mountain View, CA, USA
{\tt editco@microsoft.com}} $^\dagger$ 
}



 \maketitle
\begin{abstract}
 { \small Random samples are lossy summaries which allow queries posed
   over the data to be approximated by applying an appropriate
   estimator to the sample.  The effectiveness of sampling, however,
   hinges on estimator selection.  The choice of estimators is
   subjected to global requirements, such as unbiasedness and range
   restrictions on the estimate value, and ideally, we seek estimators
   that are both efficient to derive and apply and {\em admissible}
   (not dominated, in terms of variance, by other estimators).
   Nevertheless, for a given data domain, sampling scheme, and query,
   there are many admissible estimators.

   We study the choice of admissible nonnegative and unbiased
   estimators for monotone sampling schemes.  Monotone sampling
   schemes are implicit in many applications of massive data set
   analysis.  
Our main contribution is general derivations of admissible
estimators with desirable properties.  We present a construction of {\em order-optimal}
estimators, which minimize variance according to {\em any} specified
priorities over the data domain.  Order-optimality allows us to
customize the derivation to common patterns that we can learn or
observe in the data.  When we prioritize 
lower values (e.g., more similar data sets when estimating
difference), we  obtain the L$^*$ estimator, which is the unique
monotone admissible estimator. We show that the L$^*$ estimator is
4-competitive and dominates the classic Horvitz-Thompson estimator.
These properties make the L$^*$ estimator a natural default choice.
We also present the U$^*$ estimator, which prioritizes large values
(e.g., less similar data sets).  Our estimator constructions are both
easy to apply and possess desirable properties, allowing us to make
the most from our summarized data.

 }
\ignore{
We build on a notion of 
order optimality, defined with respect to an order on 
data vectors:  any estimator with lower variance on data must have
higher variance on some preceding data.
  By choosing the order so that more prevalent data patterns are prioritized, we
can tailor our estimators to the data.  

We define a range of optimal estimates.  The lower point in that
range, which we name the \L\ estimator, is both 4-competitive and
variance optimal, in that the second moment on any data and any
function is within a factor of 4 of the minimum possible for the data.
The \L\ estimator dominates the Horvitz-Thompson estimator and is order
optimal with respect to the order that prioritizes data with lower $f$
value.  The upper point in the range, which is the \U\ estimator, is
order optimal with respect to the reverse order, under some conditions
on $f$.  More generally, we show how to compute, for any function and
order for which an unbiased and nonnegative estimator exists, an
order-optimal estimator.  }
 \end{abstract}


\section{Introduction} \label{intro:sec}

Random sampling is a common tool in the analysis of massive
data.  Sampling is highly suitable for parallel or distributed
platforms.  The samples facilitate scalable approximate processing of queries posed
over the original data, when exact processing is too resource
consuming or when the original data is no longer available.  Random
samples have a distinct advantage over other synopsis in their
flexibility.   In particular, they naturally support {\em domain} (subset)
queries, which specify a selected set of records.
Moreover, the same sample can be used for 
basic statistics,
such as sums, moments, and averages, and more complex relations:
distinct counts, size of set intersections, and difference
norms.  

The value of a sample hinges on the accuracy within which we
can estimate query results.  In turn, this boils down to the {\em
  estimators} we use, which are the functions we apply to the sample
to produce the estimate.  As a rule, we are interested in
estimators that satisfy desirable {\em global} properties,  which must hold
for {\em all possible data} in our data domain.  Common desirable
properties are:
\begin{trivlist}
\item $\bullet$
    {\em Unbiasedness}, which means that the expectation of the estimate is 
    equal to the estimated value.  Unbiasedness is particularly important 
    when we are ultimately interested in estimating a sum aggregate,
    and our estimator is  applied to each summand. 
    Typically, the estimate for each summand has high variance, but
    with unbiasedness (and pairwise independence), the
relative error to decreases with  aggregation. 
\item $\bullet$
   {\em Range restriction} of estimates:  since the estimate is often used as a substitute of
   the true value, we would like it to be from the same domain as the
 query result.  Often, the domain is {\em nonnegative}  and we would
 like the estimate to be nonnegative as well.  Another natural
 restriction is {\em boundedness} which means that all the estimate
 for each given input is bounded.
\item $\bullet$
    {\em Finite variance} (implied by boundedness but less restrictive)
\end{trivlist}

\ignore{
 A natural  fundamental question is deriving estimators which minimize
the variance for a particular data.
For a function $f$ and data $\vecv$, the trivial
estimator which outputs the fixed value equal to $f(\vecv)$ 
on all outcomes consistent with data $\vecv$ has
zero variance for $\vecv$, but might violate global
requirements.  
Over estimators that are unbiased and nonnegative, 
the minimum possible variance for
particular data can be strictly positive: If our data domain is
$\vecv\in [0,1]$, then when the value is not sampled we must output an
estimate of $0$, to allow for nonnegative unbiased estimates when the
data value is $0$.  This means that whenever the sampling probability
is strictly less than $1$, variance is positive on all positive data
values.  
}

Perhaps the most ubiquitous quality measure of an estimator is its
variance.  The variance, however, is a function of the input data.  An
important concept in estimation theory is a
Uniform Minimum Variance Unbiased (UMVUE) 
estimator \cite{surveysampling2:book}, that is, a single estimator which attains the minimum 
possible variance  for all inputs in our data domain 
\cite{Lanke:Metrica1973}.  
A UMVUE estimator, however, generally does not exist.
We instead seek an
{\em admissible} (Pareto variance optimal) estimator \cite{surveysampling2:book} -- meaning  that strict improvement 
 is not possible without violating some global properties.  More
precisely,  an estimator is admissible if there is no other estimator that satisfies the global properties
with at most the variance
of our estimator on all data and strictly lower 
variance on some data.   A UMVUE  must be admissible, but
when one does not exist, there is typically a full Pareto front of
admissible estimators.
We recently proposed 
 {\em variance competitiveness} ~\cite{CKsharedseed:2012}, as a 
robust ``worst-case'' performance measure when there is no UMVUE.
We defined the variance competitive ratio to be 
the maximum, over data, of the ratio of the expectation of the square of our estimator
to the minimum possible for the data subject to the global properties.
A small ratio
 means that variance on each input in the data domain is not
too far off the minimum variance attainable on this data by an
estimator which satisfies the global properties.

\ignore{
 and constitutes
a robust ``worst case''  measure, which captures what we can
guarantee for all possible data.
This notion bridge the
  gap between 
the classic notion of  UMVUE (uniform minimum
  variance unbiased) estimators \cite{surveysampling2:book} , which attain the minimum possible variance for
  all data (and therefore are also variance-optimal), but generally do
  not exist \cite{Lanke:Metrica1973}, 

and the practice of estimator selections with no
  ``worst-case''  guarantees.
}

  We work with the following definition of a sampling scheme.  In the
  sequel we show how it applies to common sampling schemes and their applications.
\begin{quote}
A {\em monotone sampling scheme} $({\bf V},S^*)$ is specified by a 
{\em data domain} ${\bf V}$
and a mapping 
$S^*: {\bf V}\times (0,1] \rightarrow 2^{{\bf V}}$. The mapping is
such that the set $S^*(\vecv,u)$ for a fixed $v$ is monotone
non-decreasing with $u$.
\end{quote}
The sampling interpretation is that a {\em sample} 
$S(\vecv,u)$ of the {\em input}
$\vecv$ (which we also refer to as the {\em data vector})  is obtained by drawing a {\em seed} $u \sim U[0,1]$, uniformly at random 
from $[0,1]$.  The sample deterministically depends on $\vecv$ and the (random) {\em seed} $u$.  
The mapping $S^*(\vecv,u)$ is the set of 
all data vectors that are consistent with $S$ (which we assume
includes the seed value $u$).  It represents all the information we
can glean from the sample on the input.  In particular, we must have $\vecv\in S^*(\vecv,u)$ for all
$\vecv$ and $u$.  The sampling scheme is monotone in the
randomization: When fixing $\vecv$, the set $S^*(\vecv,u)$ is 
non-decreasing with $u$, that is, the smaller $u$ is, the more 
information we have on the data $\vecv$.  

In the applications we consider, the (expected) representation size
of the sample $S(\vecv,u)$ is typically much smaller size than
$\vecv$. 
The set $S^*$ can be very large (or infinite), and our estimators will
only depend on performing certain operations on it, such as obtaining
the infimum of some function.
Monotone sampling can also be interpreted as obtaining a ``measurement''  $S(\vecv,u)$ 
of the data $\vecv$, where $u$ determines the granularity of 
our measuring instrument. 
Ultimately, the goal is to recover some function of the data from the
sample (the outcome of our measurement):

 \begin{quote}
A {\em monotone estimation} problem is specified by a monotone
sampling scheme and a nonnegative function 
$f: {\bf  V} \geq 0$.  The goal is to specify an {\em estimator},
which is a function of all possible outcomes
$\hat{f}:{\cal S} \geq
0$, where ${\cal S} = \{S(\vecv,u) | \vecv \in {\bf V}, u\in (0,1]$.
The estimator should be unbiased
$\forall \vecv,\ \E_{u\sim U[0,1]} \hat{f}(S(\vecv,u)) = f(\vecv)$ and
satisfy some other desirable properties.
  \end{quote}
 The interpretation is that we obtain a query, specified in the form of a nonnegative function 
$f: {\bf  V} \geq 0$ on all possible data vectors $\vecv$.  We are
interested in knowing $f(\vecv)$, but we can not see $\vecv$ and
only have access to the sample $S$. The sample provides us with little
information on $\vecv$, and thus on $f(\vecv)$.  We approximate
$f(\vecv)$ by applying  an {\em estimator}, $\hat{f}(S)\geq 0$ to the sample.
The monotone estimation problem is a bundling of a function $f$ and a monotone sampling
scheme.  We are interested in estimators $\hat{f}$ that satisfy
properties. We always require nonnegativity and
unbiasedness and consider admissibilitiy, variance
competitiveness, and what we call customization (lower variance on some data patterns).


Our formulation departs from traditional estimation theory.  We view the data
vectors in the domain as the possible inputs to the
sampling scheme, and we treat estimator derivation as an
optimization problem.  The variance of the estimator parallels the
``performance'' we obtain on a certain input.  
The work horse of estimation theory, the maximum likelihood
estimator, is not even applicable here as it does not distiguish
between the different data vectors in $S^*$.
Instead,  the random
``coin flips,'' in the form of the seed $u$, that are available to the
estimator are used to restrict the set $S^*$ and obtain meaningful estimates.

  We next show how monotone sampling relates to the well-studied model of
coordinated sampling, that has extensive applications in massive data
analysis.  In particular, estimator constructions for monotone
estimation can be applied to estimate functions over coordinated samples.


\subsection*{Coordinated shared-seed sampling}
In this framework our data has a matrix form of 
multiple {\em instances} ($r>1$), where each 
instance (row) has the form of a weight assignment to the (same) set of 
items (columns). 
Different  instances may correspond to 
snapshots, activity logs, measurements,
or repeated surveys that are taken at different times or locations.  
When instances correspond to documents, items can correspond 
to features.  When  instances are network neighborhoods, items can 
correspond  to members or objects they store.   


Over such data, we are interested in
queries which depend on two or more instances and a subset (or all)
items.  Some examples are Jaccard similarity, distance norms, or the
number of distinct items with positive entry in at least one instance
(distinct count). These queries can be conditioned on a subset of items.

 Such queries often can be expressed, or can be well approximated, by a sum over
(selected) items of an {\em item function} that is applied to the tuple
containing the values of the item in the different instances. 
Distinct count is a sum aggregate of
logical OR and the $L_p$ difference  is the 
$p$th root of $L_p^p$, which sum-aggregates $|v_1-v_2|^p$, when $r=2$.
For $r\geq 2$ instances, we can consider sum aggregates of the exponentiated range functions
$\range_p(\vecv)=(\max(\vecv)-\min(\vecv))^p$, where
$p>0$.
This is made concrete in Example \ref{example1} which illustrates a data set
of 3 instances over 8 items, example  
queries, specified over a selected set of items, and the corresponding item
functions.

We now assume that 
each instance is sampled and the 
sample of each instance contains a subset of the
items that were active in the instance (had a positive weight).
Common sampling schemes for a single instance are
Probability Proportional to Size (PPS) ~\cite{Hajekbook1981} or bottom-$k$ sampling which
includes
Reservoir sampling~\cite{Knuth2f,Vit85}, Priority (Sequential Poisson)~\cite{Ohlsson_SPS:1998,DLT:jacm07},
or Successive weighted sample
without
replacement~\cite{Rosen1972:successive,ES:IPL2006,bottomk07:ds}.  
The sampling of items in each instance can be completely independent
or slightly dependent (as with Reservoir or bottom-$k$ sampling, which
samples exactly $k$ items).

  Coordinated sampling is a way of specifying the randomization so
  that the sampling of different instances utilizes the same
  ``randomization''~\cite{BrEaJo:1972,Saavedra:1995,ECohen6f,Ohlsson:2000,Rosen1997a,Broder:CPM00,BRODER:sequences97,CK:sigmetrics09,LiChurchHastie:NIPS2008,multiw:VLDB2009}.
  That is, the sampling of the same item in different instances
becomes very correlated.
Alternative term used in the survey sampling literature is Permanent
Random Numbers (PRN).  Coordinated sampling is also a form of locality
sensitive hashing (LSH):  When the weights  in two instances (rows) are very
similar, the samples we obtain are similar, and more likely to be
identical.

The method of coordinating samples had been rediscovered many times,
for different application, in both statistics and computer science. 
The main reason for its consideration by computer
scientists is that it allows for more accurate estimates of queries
that span multiple instances such as distinct counts and similarity
measures
\cite{Broder:CPM00,BRODER:sequences97,ECohen6f,CoWaSu,MS:PODC06,GT:spaa2001,Gibbons:vldb2001,BCMS:ton2004,DDGR07,BHRSG:sigmod07,HYKS:VLDB2008,LiChurchHastie:NIPS2008,CK:sigmetrics09,multiw:VLDB2009}.
In some cases, such as all-distances sketches
\cite{ECohen6f,CoKa:jcss07,MS:PODC06,bottomk07:ds,bottomk:VLDB2008,ECohenADS:PODS2014}
of neighborhoods of nodes in a graph, coordinated samples are obtained
much more efficiently than independent samples.  Coordination can be
efficiently achieved by using a random hash function, applied to the
item key, to generate the seed, in conjunction with the
single-instance scheme of our choice (PPS or Reservoir).  The use of
hashing allows the sampling of different instances to be performed
independently when storing very little state.

 The result of coordinated sampling of different instances when
 restricted to a 
single item is a monotone sampling scheme that is applied to the tuple
$\vecv$ of the weights of the item on the different instances (a
column in our matrix).  
\footnote{Bottom-$k$ samples  
select exactly $k$ items in each instance, hence inclusions of items  
are dependent.
We obtain a single-item restriction by
  considering the sampling scheme for the item conditioned 
on fixing the seed values of other items. 
 A similar situation is with all-distances sketches, where we
can use the HIP inclusion probabilities \cite{ECohenADS:PODS2014}, which
are conditioned on fixing the randomization of all closer nodes.}
The estimation problem of an item-function is a monotone estimation
problem for this sampling scheme.
 
The data domain is a subset of $r\geq 1$ dimensional vectors ${\bf V}
\subset \mathbb{R}_{\geq 0}^r$
(where $r$ is the number of instances in the query specification).
 The sampling is specified by $r$ continuous non-decreasing functions on $(0,1]$:
$\boldsymbol{\tau}=\tau_1,\ldots,\tau_r$.  
The sample $S$ includes the $i$th entry of $\vecv$ with its value $v_i$ if and only 
$v_i \geq \tau_i(u)$.    
Note that when entry $i$ is not sampled, we also 
have some information, as we know that $v_i < \tau_i(u)$. 
Therefore the set $S^*$ of data vectors consistent with our sample
(which we do not explicitly compute) includes 
the exact values of some entries and upper bounds on other entries.
Since the functions $\tau_i$ are non-decreasing, the sampling scheme
is monotone.
In particular, PPS sampling of different instances, restricted to a
single item, is expressed with
$\tau_i(u)$ that are linear functions:  There is a fixed vector
$\boldsymbol{\tau}^*$ such that $\tau_i(u) \equiv u \tau^*_i$.

Coordinated PPS sampling of the instances in Example \ref{example1} is
demonstrated in Example \ref{example2}.   
 The term {\em coordinated} refers to the use of the same random seed $u$ to 
 determine the sampling of all entries in the tuple.  This is in 
 contrast to {\em independent} where a different (independent) seed is 
 used for each entry \cite{CK:pods11}. 



We now return to the original setup of estimating sum aggregates,
such as $L^p_p$.
 Sum aggregates over a domain of items $\sum_{i\in D} f(\vecv^{(i)})$ are  estimated by
summing up estimators for the item function over the selected items,
that is $\sum_{i\in D} \hat{f}(S(\vecv^{(i)},u^{(i)})$.  In general,
the ``sampling'' is very sparse, and we expect that $\hat{f}=0$ for
most items.
These item estimates typically have high 
variance, since most or all of the entries are missing from the 
sample.  We therefore insist on unbiasedness and pairwise 
independence of the single-item estimates.  That way,
$\var[\sum_{i\in D} \hat{f}(S(\vecv^{(i)},u^{(i)}))]= \sum_{i\in D}
\var[\hat{f}(S(\vecv^{(i)},u^{(i)}))]$,
 the variance of 
the sum estimate is the sum over items in $i\in D$ of the variance of 
$\hat{f}$ for $\vecv^{(i)}$. 
Thus (assuming variance is
balanced)  we can expect the relative error to decrease $\propto 1/\sqrt{|D|}$.
Lastly, since the 
functions we are interested in are nonnegative, we also require  the 
estimates to be nonnegative (results 
extend to any one-sided range restriction on the estimates).
Therefore, the estimation of the sum-aggregate is reduced to monotone estimation on
single items.


In  \cite{CKsharedseed:2012} we provided a complete characterization of
estimation problems over coordinated samples for which estimators with desirable global 
properties exist.  This characterization can be extended to  monotone estimation.
 The properties considered were unbiasedness and nonnegativity, and 
together with finite variances or boundedness.  We also showed that 
for any coordinated estimation problem for which an unbiased nonnegative estimator with 
finite variances exists, we can construct an estimator, which we named
the J estimator, that is 
84-competitive.   The J estimator, however, is generally not 
admissible, and also, the construction was geared to establish  $O(1)$
competitiveness rather than obtain a ``natural'' estimator or to minimize 
the constant. 
\subsection*{Contributions}
\ignore{
We aim for developing highly applicable tools,
namely, ``algorithms'' for computing good estimators and for a good 
understanding of the foundations: 
Understand the limits of variance 
competitiveness.  What is the best ratio we can always guarantee ?
Can we achieve this efficiently (and together with admissibility) ?
Secondly, we aim for a better understanding of the ``Pareto front'' of 
admissible estimators which 
satisfy the global properties.  We then hope 
to leverage the freedom we have in estimator selection to {\em customize} the derivation 
and obtain estimators that perform better on recurring patterns we 
can learn or observe in the data. 
}

 The main contributions we make in this paper are the derivation of estimators for general
  monotone estimation problems.  Our estimators are admissible, easy
  to apply, and satisfy desirable properties.  
We now state the main contributions in detail.  We provide pointers to
examples and to the appropriate sections in the body of the paper.
\ignore{
To facilitate a study of competitiveness, in \cite{CKsharedseed:2012} we expressed the $\vecv$-optimal
estimates for nonnegative unbiased estimators.   
We also defined, for any tuple function,  the J
estimator which is unbiased, nonnegative, and has bounded variances, 
provided that 
an estimator with a subset of these properties exists for the function.
The J estimator is also variance competitive, but 
not variance$^+$ optimal, where we define variance$^+$ optimality as
variance optimality over unbiased nonnegative estimators.
}



\medskip
\noindent
{\bf The optimal range:} 
We start by defining the admissibility playing field for
 unbiased nonnegative estimators.   
We define the {\em optimal range} of estimates
(Section~\ref{poptr:sec}) for each particular outcome,   {\em conditioned}
on the estimate values on all ``less-informative'' outcomes (outcomes
which correspond to larger seed value $u$).   
The range includes all
estimate values that are ``locally'' optimal with respect to at least one data vector
that is consistent with the outcome.
We show that being ``in range'' almost everywhere is 
necessary for admissibility and is sufficient for unbiasedness
and nonnegativity, when an unbiased nonnegative estimator exists.

\medskip
\noindent
{\bf The \L\ estimator:}
  The lower extreme of the optimal range is obtained
by solving the constraints that force 
the estimate on each outcome to be equal to the infimum of the optimal range.
We refer to this solution as the
{\em \L\ estimator}, and study it extensively in Section~\ref{Lest:sec}.  

We show that the \L\ estimator, which is the 
solution of a respective integral equation, can be expressed in the
following convenient form:
\begin{align}
\hat{f}^{(L)}(S,\rho) &= \frac{\underline{f}^{(\vecv)}(\rho)}{\rho}- \int_{\rho}^1
\frac{\underline{f}^{(\vecv)}(u)}{u^2} du\ , \\
\end{align}
where $\rho$ is the seed value used to obtain the sample $S$, 
$\vecv\in S^*$ is any (arbitrary)
data vector consistent with $S$ and $\rho$, and the {\em lower bound} function 
$\underline{f}^{(\vecv)}(u)$ is defined as the infimum of $f(\vecz)$ over all vectors
$\vecz\in S^*(\vecv,u)$ that are consistent with the sample obtained for data $\vecv$ with seed $u$.
We note that the estimate is the same for any choice of $\vecv$ and that
the values $\underline{f}^{(\vecv)}(u)$ for all $u\geq \rho$ 
can be computed from $S$ and $\rho$.  Therefore, the estimate is well defined.
  This expression allows us to
efficiently compute the estimate, for any function,  by numeric
integration or a closed form (when a respective definite integral has a closed form).
The lower bound function is presented more precisely in Section \ref{prelim:sec}
and an example is provided in Example \ref{example3}.  An example derivation of the \L\ estimator for the functions $\range_{p+}$ is provided in Example \ref{example4}.

We show that the \L\
estimator has a natural and compelling
combination of properties.  It
satisfies both our quality measures, being both admissible
and $4$-competitive for any instance of the monotone estimation problem
for which a bounded variance estimator exists.  The competitive ratio
of $4$  improves over the previous upper bound of 84
\cite{CKsharedseed:2012}.   
We show that the ratio of $4$ of the \L\ estimator is tight in the sense that there is a family of
functions on which the supremum of the ratio, over functions and data
vectors,  is $4$.   We note however that
the \L\ estimator has lower ratio  for
specific functions.  For example, we computed 
ratios of 2 and 2.5, respectively, 
for exponentiated range with $p=1,2$ (Which
facilitates estimation of $L_p$ differences, see Example \ref{example1}).

Moreover, the \L\ estimator is
{\em monotone}, meaning that when fixing the data
vector, the estimate value is monotone non-decreasing with the
information in the outcome (the set $S^*$ of data vectors that are consistent
with our sample).  In terms of our monotone sampling formulation, 
estimator monotonicity means
that when we fix the data $\vecv$, the estimate is non-increasing with
the seed $u$.   Furthermore,  the \L\ estimator is the
{\em unique} admissible monotone estimator and thus dominates
(has at most the variance on every data vector) the Horvitz-Thompson (HT) estimator~\cite{HT52} (which is also unbiased,
nonnegative, and monotone).  

 To further illustrate this point, recall that the HT estimate is positive only on outcomes when we know
  $f(\vecv)$.  In this case, we have the inverse probability
  estimate $f(\vecv)/p$, where $p$ is the probability of an outcome
which reveals  $f(\vecv)$.  When we have partial information on
$f(\vecv)$, the HT estimate does not utilize that and is $0$ whereas
admissible estimators, such as the \L\ estimators, must use this
information.
It is also possible that the probability of an outcome that reveals
$f(\vecv)$ is $0$.  In this case, the HT estimator is not even applicable.
  One natural example is estimating the range $|v_1-v_2|$ with
say $\tau_1(u)\equiv \tau_2(u) \equiv u$, this is essentially classic
Probability Proportional to Size (PPS) sampling (coordinated between ``instances'').  When the input is
$(0.5,0)$, the range is $0.5$, but there is $0$ probability of
revealing $v_2=0$.   We can obtain nontrivial lower (and upper) bounds on
the range:  When $u\in (0,0.5)$, we have a lower bound of $0.5-u$.
Nonetheless, the probability of knowing the exact value ($u=0$) is $0$.
In contrast to the HT estimate,  our \L\ estimator is defined
for any monotone estimation instance for which a nonnegative unbiased
estimator with finite variance exists.

\medskip
\noindent
{\bf Order-optimal estimators:}
In many situations we have information on data patterns.  For example, if
our data consists of  hourly temperature measurements across locations or
daily summaries of Wikipedia, we expect it to be fairly stable.  That is,
we expect instances  to
be very similar.
That is, most tuples of values , each corresponding to a particular
geographic location or Wikipedia article, would have most entries being very similar.
In other cases, such as IP traffic, differences are typically larger.
Since there is a choice, the full Pareto front of admissible estimators, 
we would like to be able to select an estimator that would have lower
variance on more likely patterns of data vectors, 
this while still providing some weaker ``worst case'' guarantees for all applicable data vectors in our domain.

 Customization of estimators to data patterns can be facilitated through  {\em order
  optimality}~\cite{CK:pods11}.  More precisely, 
an estimator is $\prec^+$-optimal with respect to some partial order $\prec$  on data vectors
if any other (nonnegative unbiased) estimator with lower variance
on some data $\vecv$ must have strictly higher variance on some
data that precedes $\vecv$.  Order-optimality implies
admissibility, but not vice versa.  Order-optimality also uniquely specifies an
admissible estimator.  By specifying an order which
prioritizes more likely patterns in the data, we can customize the
estimator to these patterns.

 We show (Section \ref{estPREC:sec}) how to construct a $\prec^+$-optimal nonnegative unbiased
 estimators for {\em any} function and order $\prec$ for which such estimator
 exists. We show that when the data domain is discrete,
such estimators always exist whereas continuous domains
require some natural convergence properties of $\prec$.

  We also show that the \L\ estimator is $\prec^+$-optimal with respect to the
  order $\prec$ such that $\vecz\prec
  \vecv \iff f(\vecz) < f(\vecv)$.    This means that when estimating the exponentiated
range function, the \L\ estimator is optimized for high similarity (this while providing a strong 4-competitiveness guarantee even for highly dissimilar data).

\medskip
\noindent
{\bf The \U\ estimator:}
We also explore the upper extreme of the optimal range, that is, the solution obtained by aiming for the supremum of the range.  We call this solution the {\em \U\
  estimator} and we study it in Section~\ref{Uest:sec}.  This estimator is
unbiased, nonnegative, and has finite variances.  We formulate some
conditions  on the tuple function, that are satisfied by natural
functions including the exponentiated range, under which the estimator
is admissible.
The \U\ estimator, under some
 conditions, is $\prec^+$-optimal with respect to the order $\vecz\prec
  \vecv \iff f(\vecz) > f(\vecv)$.  In the context of the exponentiated range,
it means that it is optimized for highly dissimilar instances.


\ignore{
Moreover,  the study also demonstrates the significance of
 being able to understand the worst case performance and to tailor estimators to patterns in data through
 $\prec^+$-optimality, competitiveness, and variance optimality.
The tradeoff
 between the \L\ and \U\ estimators difference estimators shows that the \U\
 estimator outperforms the \L\ estimator in applications when ranges are
 large (data sets are less similar) and vice versa.  The experiments
 also demonstrate the value of the competitiveness of the \L\
 estimators:  even on data when they are dominated by tailored
 estimators, they are not ``too far off'' the optimum whereas non-competitive
 estimators can potentially perform much worse when the data deviates
 from the template we expect.
This suggests a default use of the \L\ estimator when there is no
 prior knowledge on the typical form of the data.  The competitive
 ratio of the \L\ estimator is $2$ for $p=1$ and $2.5$ for $p=2$.  
}

 Lastly, in Section \ref{conclu:sec} we conclude with a discussion of
 future work and of 
follow-up uses of our estimators in applications, including
 pointers to experiments.  One application of particular  importance that
 is enabled by our work here is the estimation of $L_p$ difference norms
 over sampled data.  Another application is similarity estimation in social
networks. We hope and believe that our methods and estimators, once understood,
will be more extensively applied.

\begin{fexample*}
{\em Instances $i\in\{1,2,3\}$ and items $k\in\{a,b,c,d,e,f,g,h\}$:}

{\small 
\begin{tabular}{c|llllllll}
       &     a &    b &     c &     d &     e &     f&     g &     h 
       \\
\hline
$v_1$   & $0.95$ & $0$ & $0.23$ & $0.70$ & $0.10$ & $0.42$ & $0$ & $0.32$
   \\
$v_2$   & $0.15$ & $0.44$ & $0$ & $0.80$ & $0.05$ & $0.50$ & $0.20$ & $0$
\\
$v_3$   & $0.25$ & $0$ & $0$ & $0.10$ & $0$ & $0.22$ & $0$ & $0$ 
\end{tabular}
}
\smallskip

{\em Example queries} over selected items
$H\subset[a\text{-}h]$. $L_p$ difference,  $L_p^p$, which is the $p$th power of $L_p$
difference and a sum aggregate which can be used to estimate the $L_p$ difference,
$L^p_{p+}$: asymmetric (increase only) $L^p_{p}$, the sum of the
increase-only and the decrease-only changes  
(decrease only is obtained by switching the roles of $v_1$ and $v_2$) is $L_p^p$,
 but each component is a useful metric for asymmetric change. 
$G$ an ``arbitrary''  sum aggregate, illustrating versatility of queries.
{\small
\begin{align*}
L_p(H) & =(\sum_{k\in H} |v^{(k)}_1-v^{(k)}_2|^p)^{1/p} \\
L_p^p(H) &= \sum_{k\in H} |v^{(k)}_1-v^{(k)}_2|^p \\
L^p_{p+}(H) &= \sum_{k\in H} \max\{0,v^{(k)}_1-v^{(k)}_2\}^p\\
G(H) &= \sum_{k\in H} |v^{(k)}_1-2v^{(k)}_2+ v^{(k)}_3|^2
\end{align*}

\begin{tabular}{l|l}
{\em sum aggregate} & {\em  item function} \\
\hline\hline
$L_p^p$  &    $\range_p(\vecv) = (\max(\vecv)-\min(\vecv))^p$\\
\hline
$L^p_{p+}$    &$\range_{p+}(v_1,v_2) = \max\{0,v_1-v_2\}^p$\\
\hline
$G$  &     $g(v_1,v_2,v_3) = |v_1+v_3-2v_2|^2$
\end{tabular}

\begin{align*}
L_1(\{b,c,e\}) =& |0-0.44|+|0.23-0| + |0.10-0.05| = 0.71 \\
L^2_2(\{c,f,h\}) =& (0.23-0)^2 + (0.50-0.42)^2+(0.32-0)^2 \approx 0.16
\\
L_2(\{c,f,h\}) =& \sqrt{L^2_2 (\{c,f,h\})} \approx 0.40 \\
L_{1+}(\{b,c,e\}) =& \max\{0,0-0.44\} +
\max\{0,0.23-0\}+\\ & +\max\{0,0.10-0.05\}=0.235\\
G(\{b,d\}) =& |0-2*0.44+0|^2+ |0.7-2*0.8+0.1|^2 \approx 1.18
\end{align*}

}
\caption{Dataset with 3 instances and queries  \label{example1}}
\end{fexample*}

\begin{fexample*}
Consider shared-seed coordinated sampling, where each of the instances
1,2,3 is  PPS sampled with threshold $\tau^*=1$.
In this particular case, each entry is sampled with probability equal
to its value.
To coordinate the samples, we draw $u^{(k)} \in U[0,1]$, 
independently for different items.
An item $k$ is sampled in instance $i$ if and only if 
$v^{(k)}_i \geq u^{(k)}$.
$S^{*(k)}$ contains all vectors consistent with the sampled entries
and with value at most $u^{(k)}$ in unsampled entries.

{\small
\begin{tabular}{c|llllllll}
item       &     a &    b &     c &     d &     e &     f&     g &     h
       \\
\hline
$v_1$   & $0.95$ & $0$ & $0.23$ & $0.70$ & $0.10$ & $0.42$ & $0$ & $0.32$
   \\
$v_2$   & $0.15$ & $0.44$ & $0$ & $0.80$ & $0.05$ & $0.50$ & $0.20$ & $0$
\\
$v_3$   & $0.25$ & $0$ & $0$ & $0.10$ & $0$ & $0.22$ & $0$ & $0$ \\
\hline
$u^{(k)}$ & $0.32$ & $0.21$ & $0.04$ & $0.23$ & $0.84$ & $0.70$ & $0.15$ & $0.64$ 
\end{tabular}
}

 The outcomes for the different items are:
$S^{(a)}=(0.95,*,*)$, $S^{(b)}=(*,0.44,*)$, 
$S^{(c)}=(0.23,*,*)$, $S^{(d)}=(0.7,0.8,*)$, 
$S^{(e)}=S^{(f)}=S^{(h)}=(*,*,*)$, 
$S^{(g)}=(*,0.2,*)$.
The sets of vectors consistent with the outcomes are
$S^{*(a)}=\{0.95\}\times [0,0.32)^2$  and 
$S^{*(h)}=[0,0.64)^3$.

\caption{Coordinated PPS sampling for Example~\ref{example1}\label{example2}}
\end{fexample*}

\begin{fexample*}
Consider 
$\range_{p+}(v_1,v_2)=\max\{0,v_1-v_2\}^p$ 
(see Example \ref{example1}) over the domain
${\bf V}=[0,1]^2$ and PPS
sampling with $\tau^*_1=\tau^*_2=1$
(as in Example \ref{example2}).
The lower bound function for data $\vecv=(v_1,v_2)$ is
$$\underline{\range_{p+}}(u,\vecv) =\max\{0,v_1-\max\{v_2,u\}\}^p\ .$$
The figures below illustrate $\underline{\range_{p+}}^{(\vecv)}(u)$
(LB) and its
lower hull (CH) for the data vectors
$(0.6,0.2)$ and $(0.6,0)$ and $p=\{0.5,1,2\}$.
For $u> 0.2$, the outcome when sampling both vectors is the same, and
thus the lower bound function is the same.  For $u\leq 0.2$, the
outcomes diverge. 
For $p\leq 1$, $\underline{\range_{p+}}^{(\vecv)}(u)$ is concave and
the lower hull is linear on $(0,v_1]$.  For $p>1$, the lower hull
coincides with $\underline{\range_{p+}}^{(\vecv)}(u)$ on some interval
$(a,v_1]$ and is linear on $(0,a]$.  When $v_2=0$,
$\underline{\range_{p+}}^{(\vecv)}(u)$ is equal to its lower hull.
\centerline{
\begin{tabular}{ccc}
\ifpdf
\includegraphics[width=0.32\textwidth]{phalf2_LB_CH}
\else
 \epsfig{figure=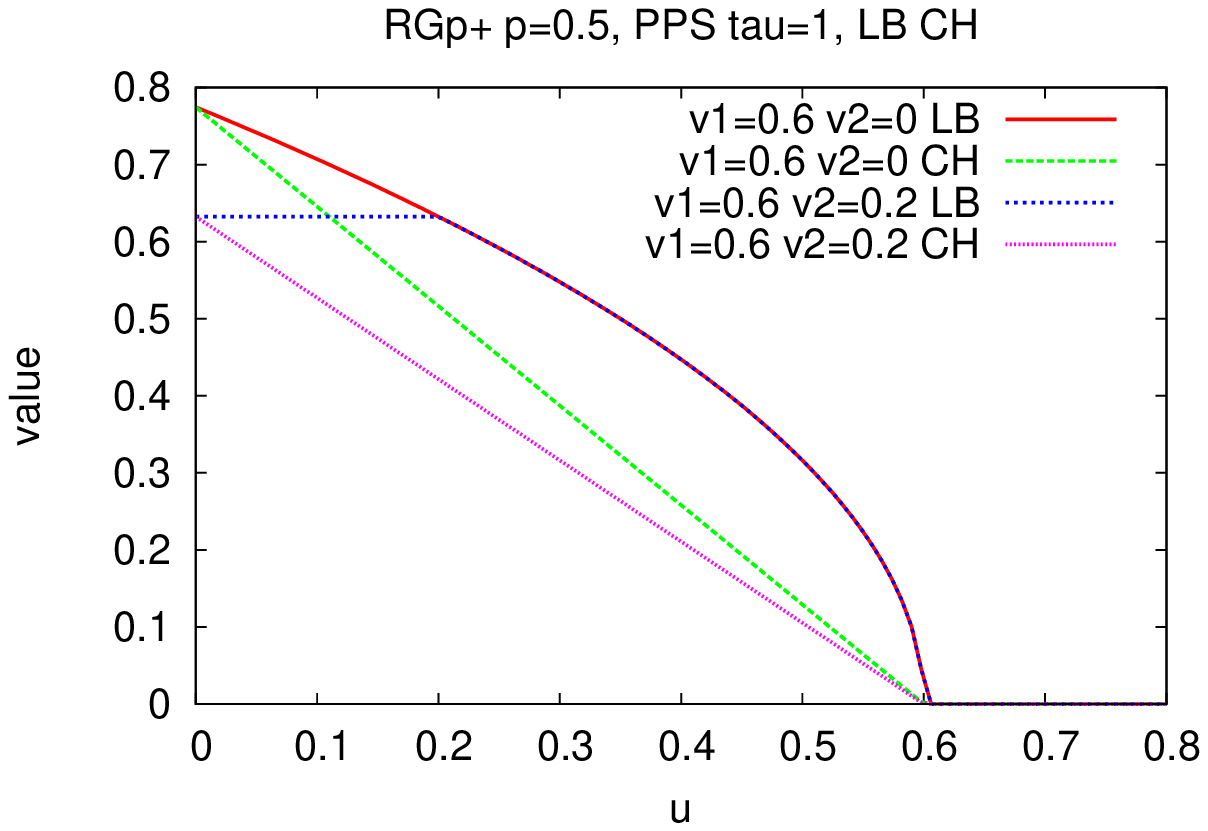,width=0.32\textwidth}
\fi
&
\ifpdf
\includegraphics[width=0.32\textwidth]{pone2_LB_CH}
\else
 \epsfig{figure=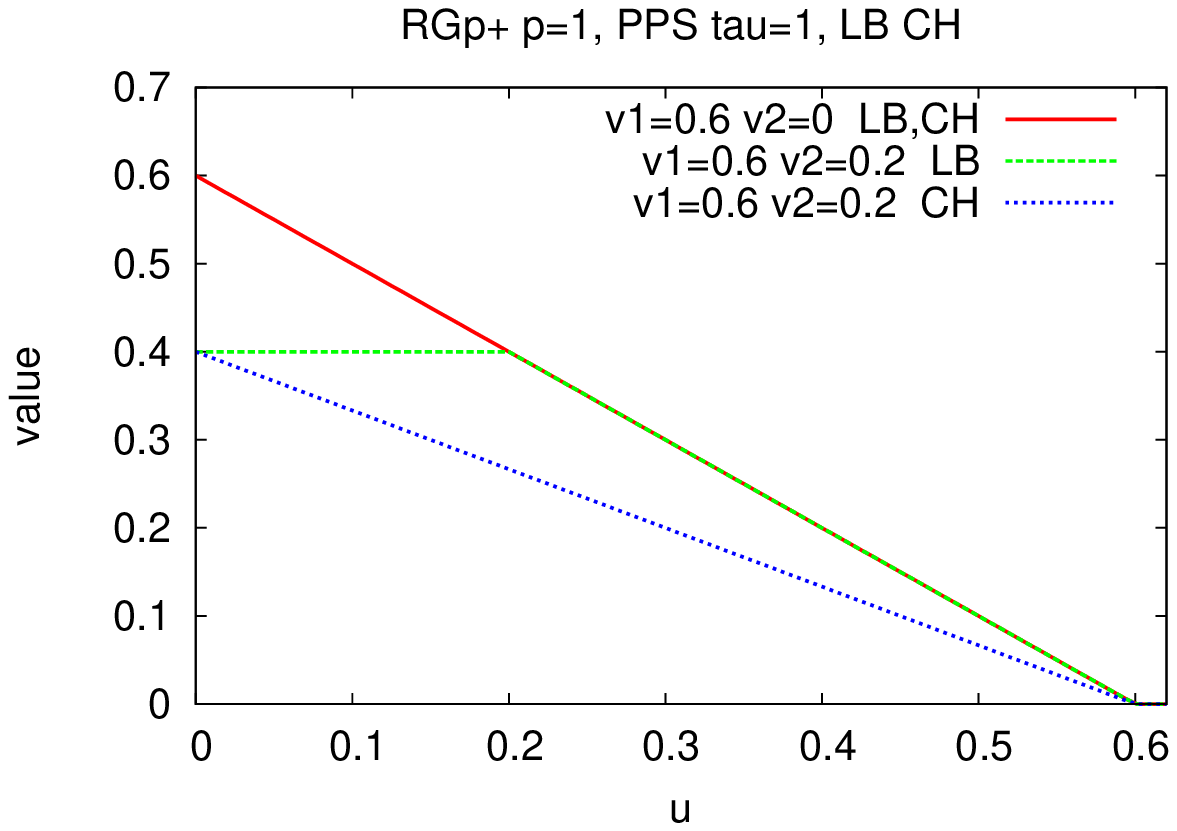,width=0.32\textwidth}
\fi
&
\ifpdf
\includegraphics[width=0.32\textwidth]{ptwo2_LB_CH}
\else
 \epsfig{figure=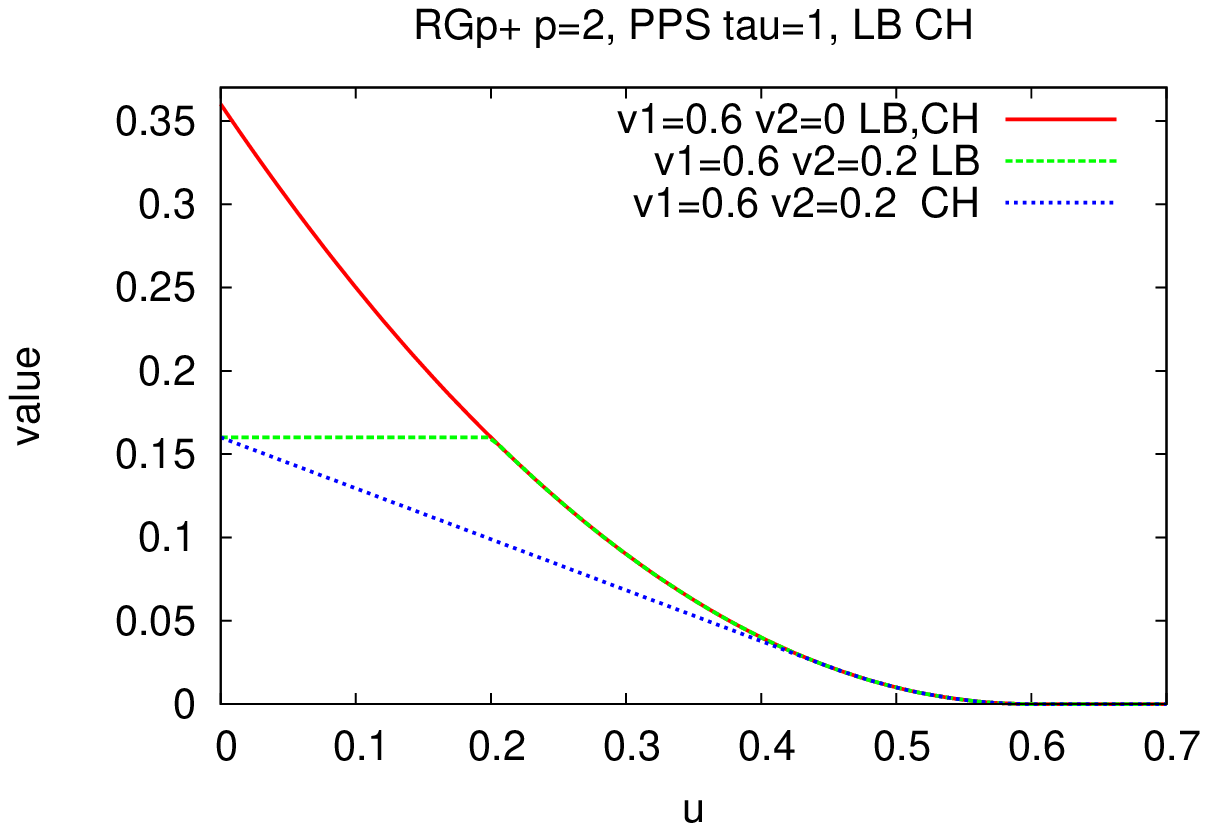,width=0.32\textwidth}
\fi
\end{tabular}
}
The $\vecv$-optimal estimates are the negated slopes of
the lower hulls.  They are $0$ when $u\in (0.6,1]$, since these
outcomes are consistent with data on which $\underline{\range_{p+}}=0$.
They are constant for $u\in (0,v_1]$ when $p\leq 1$. 
Observe that for $u\in (0.2,0.6]$, the $\vecv$-optimal estimates are
different even though the outcome of sampling the two vectors are
the same --  demonstrating that it is not possible to
simultaneously minimize the variance of the two vectors.
\caption{Lower bound function and its lower hull\label{example3}}
\end{fexample*}

\begin{fexample*}
We compute the \L\ and \U\ estimators for $\range_{p+}$ for the
sampling scheme and data in Example \ref{example3}.  
For the two vectors $(0.6,0.2)$ and $(0.6,0)$, both
the \L\ and \U\ estimates are $0$ when $u\geq 0.6$, this
is necessary from unbiasedness and nonnegativity because for these outcomes
$\exists \vecv\in S^*, \range_{p+}(\vecv)=0$.
Otherwise, the \L\ estimate is
$\hat{\range}^{(L)}_{p+}(S)=(v_1-{v'}_2)^p/{v'}_2 - \int_{{v'}_2}^{v_1} \frac{(v_1-x)^p}{x^2}dx$,
where ${v'}_2=u$ when  $S=\{1\}$ and ${v'}_2=v_2$ when $S=\{1,2\}$.
When $p\geq 1$, the \U\ estimate is 
$\hat{\range}^{(U)}_{p+}(S)=p(v_1-u)^{p-1}\ $
when $u\in (v_2,v_1]$ and $0$ when $u\leq v_2<v_1$.
When $p\leq 1$ the \U\ estimate is $v_1^{p-1}$ when $u\in (v_2,v_1]$ 
and  $\frac{(v_1-v_2)^p-v_1^{p-1}(v_1-v_2)}{v_2}$  when $u\leq v_2<v_1$.

 The figure also include the $\vecv$-optimal estimates, discussed in Example \ref{example3}.
\ignore{When $p\leq 1$, the lower bound function
$\underline{\range_{p+}}^{(\vecv)}(u)$
 is concave, and its lower hull is a linear function between
 $(0,(v_1-v_2)^p)$ and $(v_1,0)$.  The $\vecv$-optimal estimates are
$(v_1-v_2)^p/v_1$ for $u\in (0,v_1)$.
}
When $v_2=0$, the \U\ estimates are $\vecv$-optimal.
The \L\ estimate is not bounded when $v_2=0$ (but has bounded variance
and is competitive).

\centerline{
\begin{tabular}{ccc}
\ifpdf
\includegraphics[width=0.32\textwidth]{phalf2_OLU_ests}
\else
 \epsfig{figure=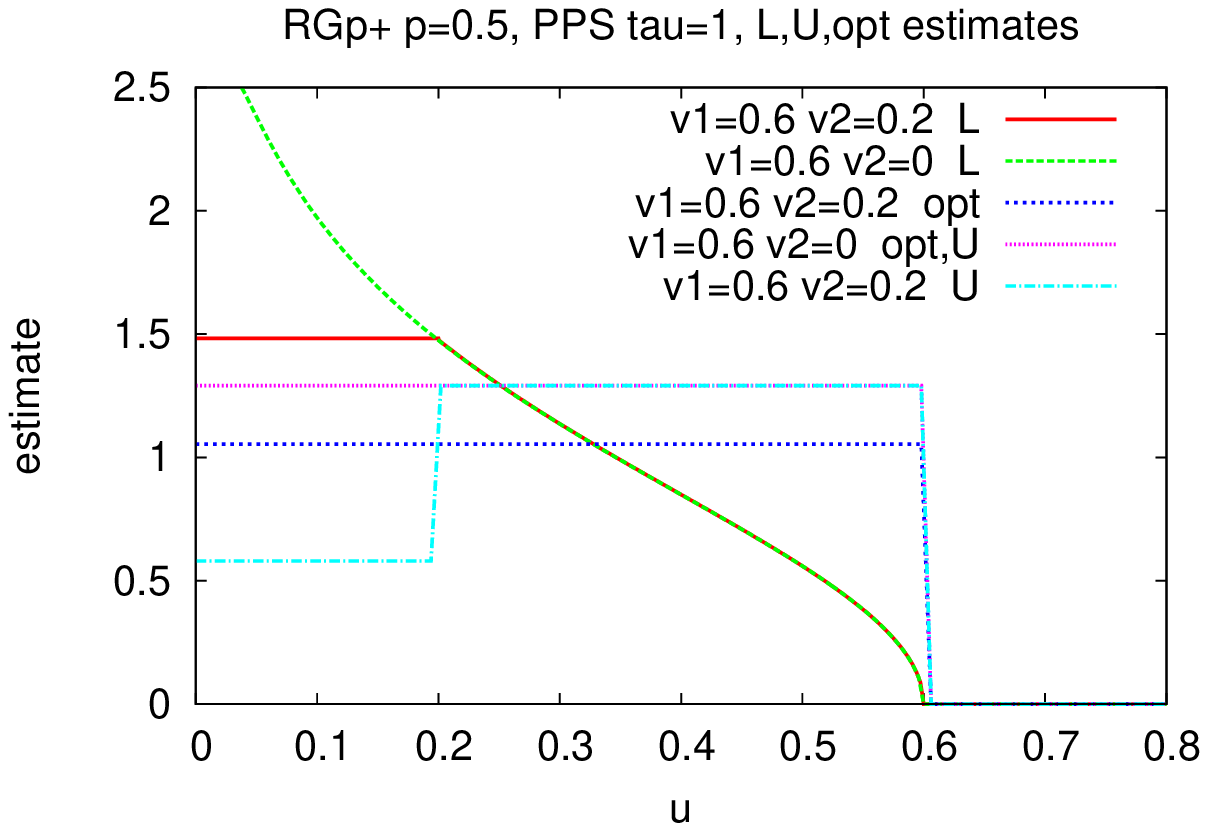,width=0.32\textwidth}
\fi
&
\ifpdf
\includegraphics[width=0.32\textwidth]{pone2_OLU_ests}
\else
 \epsfig{figure=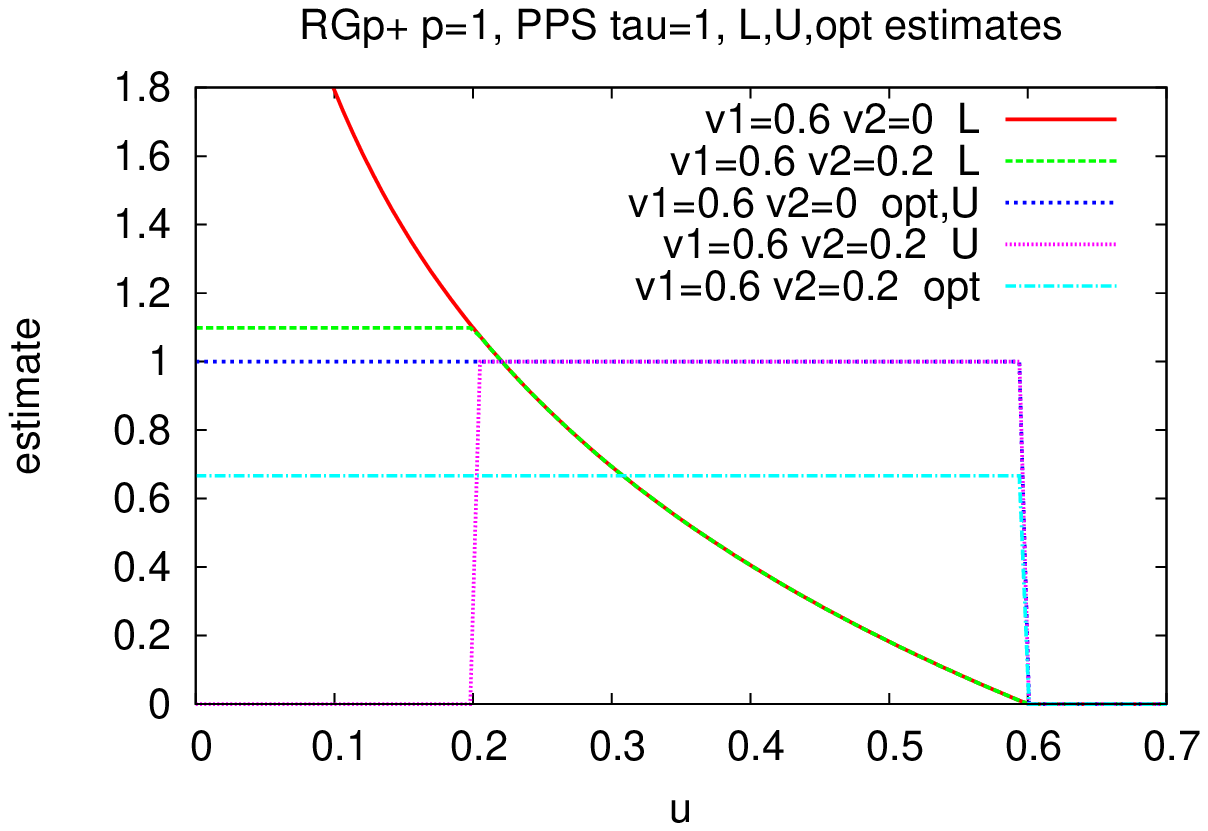,width=0.32\textwidth}
\fi
&
\ifpdf
\includegraphics[width=0.32\textwidth]{ptwo2_OLU_ests}
\else
 \epsfig{figure=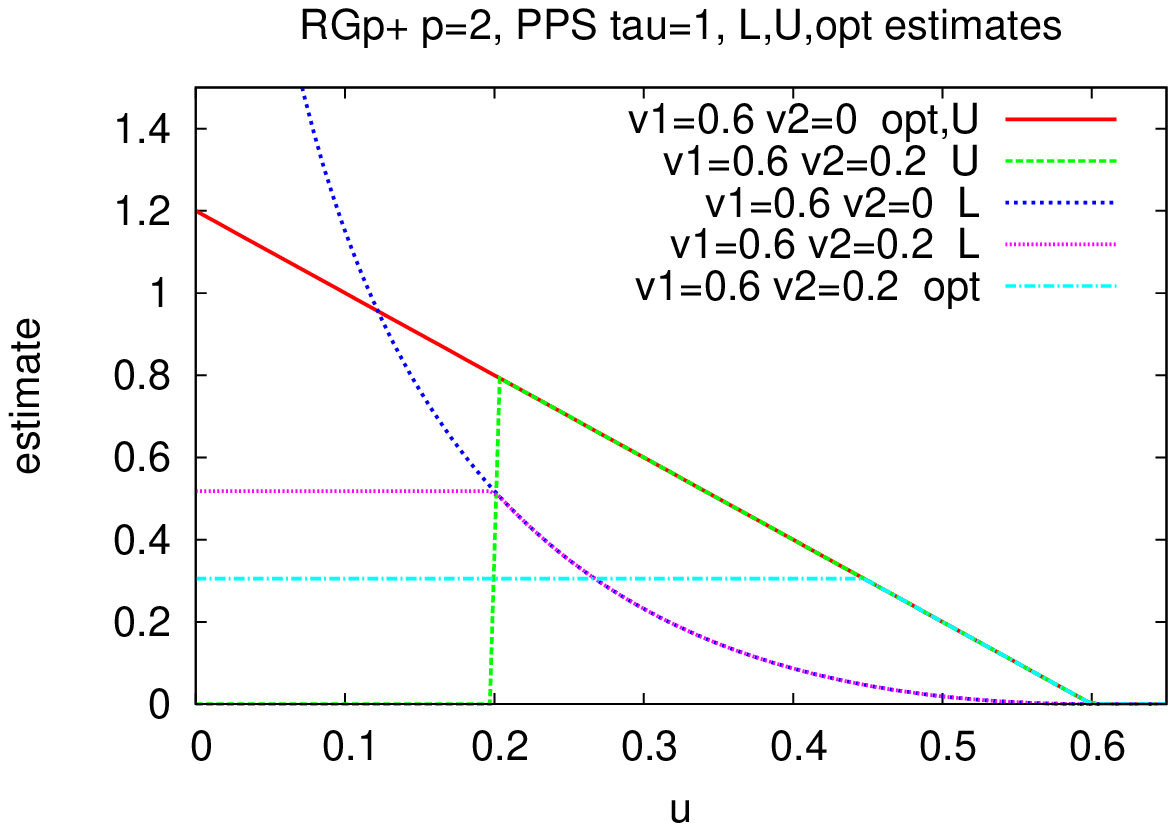,width=0.32\textwidth}
\fi
\end{tabular}
}
\caption{\L\ and \U\ estimates for Example \ref{example3}\label{example4}}
\end{fexample*}

\begin{fexample*}
{\small
We derive $\prec^+$-optimal $\range_{1+}$
estimators over the finite domain ${\bf V}=\{0,1,2,3\}^2$.  
Assuming same sampling scheme on both entries, there are 3 threshold
values of interest, where $\pi_i$ $i\in [3]$ is such that entry of
value $i$ is sampled if and only if $u\leq \pi_i$.  We have
$\pi_1<\pi_2<\pi_3$.

 The lower bounds  $\underline{\range_{1+}}^{(\vecv)}$ are 
step functions with
steps at $u=\pi_i$.  The table below shows
$\underline{\range_{1+}}^{(\vecv)}(u)$ for all $u$ and $\vecv$ such that
$\range_{1+}(\vecv)>0$.  When $\range_{1+}(\vecv)=0$, we have 
$\underline{\range_{1+}}^{(\vecv)}(u)\equiv 0$ and any unbiased nonnegative
estimator must have $0$ estimates on outcomes that are consistent with $\vecv$.

\begin{tabular}{l|cccccc}
$ \underline{\range_{1+}}^{(\vecv)}$   & $(1,0)$   &  $(2,1)$ &  $(2,0)$ & $(3,2)$ & $(3,1)$ & $(3,0)$ \\
\hline
$(0,\pi_1]$ & $1$ & $1$  & $2$  &  $1$  & $2$ & $3$ \\
$(\pi_1,\pi_2]$ & $0$ & $1$  & $1$  &  $1$  & $2$ & $2$ \\
$(\pi_2,\pi_3]$ & $0$ & $0$  & $0$  &  $1$  & $1$ & $1$ \\
$(\pi_3,1]$ & $0$ & $0$  & $0$  &  $0$  & $0$ & $0$ 
\end{tabular}

\medskip
The
$\vecv$-optimal estimate, $\hat{\range}_{1+}^{(\vecv)}(u)$ is the
negated slope at $u$ of the lower hull of
$\underline{\range_{1+}}^{(\vecv)}$. 
The lower hull of each
step function is piecewise linear with breakpoints at a
subset of $\pi_i$, and thus, the $\vecv$-optimal estimates are
constant on each segment $(\pi_{i-1},\pi_i]$.
 The table
shows the estimates for all $\vecv$ and $u$.  
The notation $\downarrow$ refers to value in same column and one row
below and $\Downarrow$ to value two rows below.

\begin{tabular}{l|cccccc}
 $\hat{\range}_{1+}^{(\vecv)}$   & $(1,0)$   &  $(2,1)$ &  $(2,0)$ & $(3,2)$ & $(3,1)$ & $(3,0)$ \\
\hline
$(0,\pi_1]$ & $\frac{1}{\pi_1}$ & $\frac{1}{\pi_2}$  &  $\frac{2-(\pi_2-\pi_1)\downarrow}{\pi_1}$   &  $\frac{1}{\pi_3}$  & $\frac{2-\Downarrow}{\pi_2}$ & $\frac{3-\downarrow(\pi_3-\pi_2)-\Downarrow(\pi_2-\pi_1)}{\pi_1} $ \\
$(\pi_1,\pi_2]$ & $0$ &  $\frac{1}{\pi_2}$  &  $\min\{\frac{2}{\pi_2},\frac{1}{\pi_2-\pi_1}\}$  &  $\frac{1}{\pi_3}$  & $\frac{2-\downarrow}{\pi_2} $ & $\min\{\frac{3-\downarrow(\pi_3-\pi_2)}{\pi_2},\frac{2-\downarrow(\pi_3-\pi_2)}{\pi_2-\pi_1} $ \\
$(\pi_2,\pi_3]$ & $0$ & $0$  & $0$  &  $\frac{1}{\pi_3}$  & $\min\{\frac{2}{\pi_3},\frac{1}{\pi_3-\pi_2}\} $ & $\min\{\frac{3}{\pi_3},\frac{1}{\pi_3-\pi_2}\} $ 
\end{tabular}

\medskip

  The order 
$(2,1)\prec (2,0)$ and 
$(3,2)\prec (3,1) \prec (3,0)$ yields the \L\ estimator, which is
$\vecv$-optimal for $(1,0)$, $(2,1)$, and $(3,2)$.
The order
$(2,0)\prec (2,1)$ and 
$(3,0)\prec (3,1) \prec (3,2)$ yields the \U\ estimator
which is
$\vecv$-optimal for $(1,0)$, $(2,0)$, and $(3,0)$.
Observe that it suffices to only specify $\prec$ so that 
the order is defined between vectors consistent with the same outcome $S$
when $\underline{\range_{1+}}(S)>0$.
For $\range_{1+}$, this means specifying 
the order between vectors with the same $v_1$ value
(and only consider those with strictly smaller $v_2$).  In follows that
any admissible estimator is $(1,0)$-optimal.

 To specify an estimator, we need to specify it on all possible
 outcomes, where each distinct outcome is 
uniquely determined by a corresponding set of data vectors  $S^*$.  The 8 possible outcomes
(we exclude those consistent with vectors with $\range_{1+}(\vecv)=0$ on which the estimate must
be $0$) are $(1,0)$, $(2,\leq 1)$, $(2,1)$, $(3,\leq 2)$, $(3,2)$,
$(3, \leq 1)$, $(3,1)$, and $(3,0)$, where an entry ``$\leq a$''
specifies all vectors in ${\bf V}$ where the entry is at most $a$.

We show how we construct the $\prec^+$-optimal estimator for $\prec$
which prioritizes vectors with difference of $2$:
$(3,1)\prec (3,2) \prec (3,0)$ and $(2,0)\prec (2,1)$.
The estimator is $\vecv$-optimal for $(3,1)$, $(2,0)$, and $(1,0)$.
This determines the estimates 
$\hat{\range}_{1+}^{(\prec)}$ 
on all outcomes consistent with these vectors: The value 
on outcome $(1,0)$ is  $\hat{\range}^{((1,0))}((0,\pi_1])$,  the
values on outcomes $(2,\leq 1)$ and $(2,0)$ are according to
$\hat{\range}^{(2,0)}$ on $(\pi_1,\pi_2]$ and $(0,\pi_1]$,
respectively,
 and value on the outcomes
$(3,\leq 2)$, $(3, \leq 1)$ and $(3,1)$  is according to $\hat{\range}^{(3,1)}$
on $(\pi_2,\pi_3]$ and $(\pi_1,\pi_2]$.    These values are provided
in the table above.
The remaining outcomes are
$(3,0)$, $(3,2)$, and $(2,1)$.   We need to specify the estimator so
that it is unbiased on these vectors, given the existing specification.
We have
\begin{align*}
\hat{\range}_{1+}^{(\prec)}(2,1) & =\frac{1-(\pi_2-\pi_1)\hat{\range}_{1+}^{(\prec)}(2,\leq 1)}{\pi_1}\\
\hat{\range}_{1+}^{(\prec)}(3,0) &=\frac{3-(\pi_3-\pi_2)\hat{\range}_{1+}^{(\prec)}(3,\leq 2)-(\pi_2-\pi_1)\hat{\range}_{1+}^{(\prec)}(3,\leq 1) }{\pi_1}\\
\hat{\range}_{1+}^{(\prec)}(3,2)
&=\frac{2-(\pi_3-\pi_2)\hat{\range}_{1+}^{(\prec)}(3,\leq
  2) }{\pi_1}\ .
\end{align*}

Observe that to apply these estimators, we do not have to precompute the estimator on all
possible outcomes.   An estimate only depends on values of the
estimate on all less informative outcomes.  In a discrete domain
as in this example, it is the number of breakpoints larger than 
the seed $u$ (which is at most the number of distinct values in the domain).  
}
\caption{Walk-through derivation of $\prec^+$-optimal estimators\label{precex}}
\end{fexample*}

\section{Preliminaries}  \label{prelim:sec}
  We present some properties of monotone sampling and briefly review concepts and of results from
  \cite{CK:pods11,CKsharedseed:2012} which we build upon here.

\smallskip
\noindent
\ignore{
{\bf Sampling model:}
The data domain ${\bf V } \subset \mathbb{R}^r$ is a subset of the
reals bounded from below.
The 
{\em sampling scheme}  is specified by
continuous non-decreasing functions
$\mbox{\boldmath{$\tau$}}=(\tau_1,\ldots,\tau_r)$ on $[0,1]$
where the infimum of the range of $\tau_i$ is at most $\inf_{\vecv\in {\bf V}}  v_i$.
To apply the sampling to a data vector $\vecv=(v_1,v_2,\ldots,v_r)\in {\bf V}$,
 we draw 
a uniform random number
$u\sim U[0,1]$, which we refer to as the {\em seed}.
The output of the sampling
is the outcome $S\equiv S(u,\vecv)$, which is a subset of the entries
of $\vecv$ such that
 the $i$th entry of $\vecv$ is included in $S$ if and only if $v_i$ is at least $\tau_i(u)$:
$$i\in S\,  \iff \, v_i\geq \tau_i(u) \ .$$ We also assume that the
seed $u$ is available with the outcome.

  With each outcome $S(u,\vecv)$, we can identify the set 
$S^*$  of all data vectors that are consistent with it:
\begin{align*}
& S^*\equiv V^*(u,\vecv) =  \\ & 
\{\vecz \mid \forall i\in [r], i\in S\wedge   z_i=v_i
 \, \vee \, i\not\in S \wedge z_i< \tau_i(u)   \}\ .
\end{align*}
}

Consider monotone sampling, as defined in the introduction.
 For any two 
outcomes, $S^*_1=S^*(u,\vecv)$ and $S^*_2=S^*(u',\vecv')$, 
the sets $S^*_1$ and $S^*_2$  must be either disjoint or one is
contained in the other.   This is because if there is a common data
vector $\vecz \in S^*_1\cap S^*_2$, then $S^*_1=S^*(u,\vecz)$ and
$S^*_2=S^*(u',\vecz)$.
From definition of monotone sampling, if $u'>u$ then $S^*_1\subseteq
S^*_2$ and vice versa.
For any $\vecv, \vecz\in {\bf V}$ , the set of $u$ values which satisfy
$S^*(u,\vecv)=S^*(u,\vecz)$ is a suffix of the interval $(0,1]$.
This is because $S^*(u,\vecv)=S^*(u,\vecz)$ implies
$S^*(u',\vecv)=S^*(u',\vecz)$ for all $u'>u$.
For convenience, we assume that this interval is open to the left:
\footnote{This assumption can be integrated while affecting at most a ``zero 
measure'' set of outcomes for any data point.  Therefore, this does not affect estimator properties.}
\begin{eqnarray}
 \lefteqn{\forall \rho\in (0,1]\ \forall \vecv ,}
\label{openset:lemma}\\
& \vecz \in S^*(\rho,\vecv) \implies
\exists \epsilon > 0,\ \forall x\in (\rho-\epsilon,1],\ \vecz\in
S^*(x,\vecv)\nonumber
\end{eqnarray}


\smallskip
\noindent
{\bf Estimators:}
 We are interested in estimating, from the outcome $S(u,\vecv)$, the
 quantity $f(\vecv)$, where the function
$f:{\bf V}$ maps ${\bf V}$ to the nonnegative reals.
We apply an {\em estimator} $\hat{f}$ to the outcome (including the seed)
and use the notation $\hat{f}(u,\vecv)\equiv
\hat{f}(S(u,\vecv))$.  When the domain is continuous, we assume
$\hat{f}$ is (Lebesgue) integrable.

Two estimators $\hat{f}_1$ and $\hat{f}_2$
are {\em equivalent} if for all data $\vecv$, $\hat{f}_1(u,\vecv)=\hat{f}_2(u,\vecv)$ with probability $1$, which is the same as
\begin{align} 
\text{$\hat{f}_1$ and $\hat{f}_2$
are equivalent} \iff
\forall \vecv \forall \rho\in (0,1],  & \label{lebesguediff} \\
\lim_{\eta\rightarrow \rho^{-}} \frac{\int_\eta^\rho \hat{f}_1(u,\vecv)du}{\rho-\eta}=
\lim_{\eta\rightarrow \rho^{-}} \frac{\int_\eta^\rho
  \hat{f}_2(u,\vecv)du}{\rho-\eta} \ .\nonumber 
\end{align}

An estimator $\hat{f}$ is
 {\em nonnegative}  if $\forall S,\ \hat{f}(S)\geq 0$ and is
{\em unbiased} if $\forall  \vecv,\ \E[{\hat f} | \vecv]=f(\vecv)$.
An estimator has
{\em finite variance} on $\vecv$ if $\int_0^1 \hat{f}(u,\vecv)^2
du < \infty$ (the expectation of the square is finite) and is  {\em bounded} on $\vecv$ if $\sup_{u\in (0,1]}
  \hat{f}(u,\vecv) < \infty$.  If a nonnegative estimator is bounded
  on $\vecv$, it also
  has finite variance for $\vecv$.
An estimator is {\em monotone} on $\vecv$ if
when fixing $\vecv$ and considering outcomes consistent with $\vecv$, the estimate value is non decreasing with the
information on the data that we can glean from the outcome, that is, 
$\hat{f}(u,\vecv)$ is non-increasing with $u$.
We say that an estimator is bounded, has finite variances, or is
monotone, if the respective property holds for all $\vecv\in {\bf V}$.


\smallskip
\noindent
{\bf The lower bound function.}
For $Z\subset {\bf V}$,  we
define $\underline{f}(Z) = \inf\{ f(v) \mid v\in Z\}$ as
the infimum of $f$ on $Z$.
We use the notation $\underline{f}(S)\equiv \underline{f}(S^*)$,
$\underline{f}(\rho,\vecv) \equiv \underline{f}(S^*(\rho,\vecv))$.
When $\vecv$ is fixed, we use
$\underline{f}^{(\vecv)}(u) \equiv \underline{f}(u,\vecv)$.
Some  properties which we need in the sequel are~\cite{CKsharedseed:2012}:
\begin{align}  
\bullet & \text{$\forall \vecv$, 
$\underline{f}^{(\vecv)}(u)$ is } 
\text{monotone non increasing
and  left-continuous.}\label{proplb}  
& \\
\bullet & \text{$\hat{f}$ is unbiased and nonnegative} \implies  \label{cnonneg:eq} \\
& \forall \vecv, \forall \rho, \int_\rho^1 \hat{f}(u,\vecv)du \leq
\underline{f}^{(\vecv)}(\rho) \ .\label{nonneg:eq} 
\end{align}

The lower bound function $\underline{f}^{(\vecv)}$,  and its lower hull
$H^{(\vecv)}_f$, are instrumental in capturing existence
of estimators with desirable properties~\cite{CKsharedseed:2012}:
\begin{align}
\bullet & \text{$\exists$  unbiased nonnegative $f$ estimator}
 \iff \label{nec_reqa}\\
& \forall \vecv\in {\bf V},\,
\lim_{u \rightarrow 0^+} \underline{f}^{(\vecv)}(u) = f(\vecv)\
. \label{nec_req}\\
\bullet & \text{If $f$ satisfies \eqref{nec_req},} \nonumber\\
& \text{$\exists$ unbiased nonnegative estimator with finite variance for $\vecv$} \nonumber\\
&  \iff \int_0^1 \bigg(\frac{d H^{(\vecv)}_f(u)}{du} \bigg)^2 du  < \infty \ .  \label{bounded_var_shared_nec_req}\\
 & \text{$\exists$ unbiased nonnegative estimator that is bounded on $\vecv$} 
 \nonumber\\
 & \iff \lim_{u \rightarrow 0^+} \frac{f(\vecv)-\underline{f}^{(\vecv)}(u)}{u} < \infty \ . \label{bounded_shared_nec_req}
\end{align}
Example \ref{example3} illustrates lower bound functions and respective
lower hulls for $\range_{p+}$.

\medskip
\noindent
{\bf Partially specified estimators.}
We use {\em partial specifications} $\hat{f}$ of (nonnegative and unbiased)
estimators, which are
specified on a set of
outcomes ${\cal S}$ so that
\begin{align*}
\forall \vecv\ \exists\rho_v\in [0,1],\ &
S(u,\vecv)\in {\cal S}\, \mbox{{\it almost everywhere} for }  u>\rho_v\, \wedge\, \\
& S(u,\vecv)\not\in {\cal S}\, \mbox{{\it almost everywhere} for }  u\leq \rho_v\ .
\end{align*}
When $\rho_v=0$, we say that the estimator is
{\em fully specified} for $\vecv$.
We also require that $\hat{f}$ is
nonnegative where specified and satisfies
\begin{subequations}\label{pdnereq}
\begin{eqnarray}
\forall \vecv,\ \rho_v>0 & \implies & \int_{\rho_v}^1 \hat{f}(u,\vecv)du \leq f(\vecv) \label{condnonneg}\\
\forall \vecv,\ \rho_v=0 & \implies & \int_{\rho_v}^1
\hat{f}(u,\vecv)du = f(\vecv)\ . \label{condunbiased}
\end{eqnarray}
\end{subequations}

\begin{lemma}  \label{completioncoro} \cite{CKsharedseed:2012}
If $f$ satisfies \eqref{nec_req} (has a nonnegative unbiased
estimator), then any  partially specified estimator
can be extended to an unbiased nonnegative estimator.
\end{lemma}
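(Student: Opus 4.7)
The plan is to construct the extension explicitly using the nonnegative unbiased estimator $g$ whose existence is guaranteed by \eqref{nec_req}. For each $\vecv$ with $\rho_v > 0$, define the deficit $D(\vecv) = f(\vecv) - \int_{\rho_v}^1 \hat{f}(u,\vecv)\,du \geq 0$ (nonnegativity from \eqref{condnonneg}). Setting $\hat{f}_{\text{ext}}(S)=\hat{f}(S)$ on specified outcomes, the task reduces to defining $\hat{f}_{\text{ext}}$ on unspecified outcomes as a single-valued function of $S$ that is nonnegative and satisfies $\int_0^{\rho_v}\hat{f}_{\text{ext}}(u,\vecv)\,du = D(\vecv)$ for each partially-specified $\vecv$.

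I would attempt the ansatz $\hat{f}_{\text{ext}}(S) = g(S) + c(S)$ on unspecified outcomes, which reduces unbiasedness to the constraint $\int_0^{\rho_v} c(u,\vecv)\,du = \delta(\vecv) := \int_{\rho_v}^1 (g-\hat{f})(u,\vecv)\,du$ and reduces nonnegativity to $c \geq -g$. First I would dispatch the easy cases: when $\mathcal{S} = \emptyset$, take $\hat{f}_{\text{ext}} = g$; when the partial specification coincides with $g$ on $\mathcal{S}$, then $\delta(\vecv)=0$ everywhere and $c \equiv 0$ works. The general case requires a procedure that distributes $c$ coherently across the outcome tree, processing outcomes in decreasing $u$. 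By \eqref{nec_req}, the outcome classes near $u = 0$ become sufficiently refined that the deficit for each $\vecv$ can be absorbed near the bottom of its chain without interfering with other vectors' chains, and the tree structure established in the preliminaries (outcomes at comparable levels are either disjoint or nested) gives a natural recursion.

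The main obstacle is consistency: the same outcome $S$ may be compatible with multiple vectors $\vecv$ that impose different integral constraints on $c$. I would address this by induction along the outcome tree: at each branching point where a previously-shared outcome splits into distinguishable sub-outcomes, allocate correction mass proportionally to the per-branch accumulated deficit, leveraging the nonnegativity slack of $g$ to preserve $c \geq -g$. A cleaner, softer alternative is a Zorn's-lemma argument on the poset of valid partial extensions of $\hat{f}$ ordered by specification-set inclusion: the set is non-empty (it contains the given $\hat{f}$) and chains have unions as upper bounds, so a maximal element exists. Any properly-partial maximal element would be contradicted by the local extension construction above — whenever $\rho_v > 0$ for some $\vecv$, one can enlarge $\mathcal{S}$ by one ``layer'' while preserving \eqref{condnonneg} and nonnegativity — so the maximum must be fully specified, yielding the desired unbiased nonnegative extension.
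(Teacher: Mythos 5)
There is a genuine gap, and it sits exactly where the real difficulty of this lemma lies. (Note the paper itself does not reprove the statement --- it imports it from \cite{CKsharedseed:2012} --- but the machinery it uses elsewhere shows what a correct proof needs.) The standard route is to extend the estimator on each unspecified outcome $S=S(\rho,\vecv)$ by a rule that is a function of the outcome alone, e.g.\ the lower end of the optimal range, $\hat{f}(S)=\bigl(\underline{f}(\rho,\vecv)-\int_\rho^1\hat{f}(u,\vecv)du\bigr)/\rho$ as in \eqref{b1:ineq}/\eqref{LBdefeq} conditioned on the partial specification. Because $\underline{f}(S)$ and the accumulated integral depend only on $S$, consistency across the outcome tree is automatic; nonnegativity follows because \eqref{condnonneg}, applied to the vectors realizing the infimum defining $\underline{f}$, gives $\int_\rho^1\hat{f}(u,\vecv)du\le\underline{f}(\rho,\vecv)$ for every $\vecv$; and unbiasedness is \emph{not} automatic but is forced by a quantitative argument of the type in the proof of Lemma~\ref{rangeunon} (the residual deficit satisfies $\Delta(\rho/2^i)\le(3/4)^i\Delta(\rho)$, so it vanishes as $u\to 0$, which is where \eqref{nec_req} enters). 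Your proposal never brings in the lower bound function, and that function is precisely what governs how much estimate mass may legally be placed at each seed value given all the other vectors sharing the outcome.

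Concretely, two steps fail. First, the claim that the deficit of $\vecv$ can be ``absorbed near the bottom of its chain without interfering with other vectors' chains'' is structurally false in continuous domains: e.g.\ for PPS sampling and $\vecv=(0.5,0)$, every outcome $S(u,\vecv)$ with $u>0$ is also consistent with the vectors $(0.5,z)$, $z<u$, whose budgets $f(\vecz)$ constrain any mass placed there; there is no ``private'' tail of the chain, and \eqref{nec_req} only says $\underline{f}^{(\vecv)}(u)\to f(\vecv)$, not that outcomes eventually separate $\vecv$ from other vectors. The ``proportional allocation at branch points'' is therefore not a construction but a restatement of the problem, since the admissible allocation rate is exactly what $\underline{f}$ encodes. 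Second, the Zorn's-lemma shortcut breaks at the upper-bound step: for a chain of valid partial specifications in which $\rho_v\to 0$ for some $\vecv$, the union has $\rho_v=0$ but only guarantees $\int_0^1\hat{f}(u,\vecv)du\le f(\vecv)$, so it may violate \eqref{condunbiased} and need not lie in your poset; relatedly, maximality via ``enlarge ${\cal S}$ by one layer'' proves nothing about unbiasedness, since extending every layer by the value $0$ is always legal under \eqref{condnonneg} yet the resulting fully specified estimator is biased whenever the deficit is positive. Any correct argument must build into the extension rule a guaranteed per-layer injection of mass tied to $\underline{f}$ (as the in-range/$\lambda_L$ construction does) and then prove convergence of the cumulative estimate to $f(\vecv)$; that step is absent from the proposal.
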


\medskip
\noindent
{\bf $\vecv$-optimal extensions and estimators.}
Given a partially specified estimator $\hat{f}$ so that $\rho_v>0$ and
$M=\int_{\rho_v}^1 \hat{f}(u,\vecv)du$, a {\em $\vecv$-optimal extension} is
an extension which is fully specified for $\vecv$ and minimizes
variance for $\vecv$
(amongst all such extensions).   The $\vecv$-optimal  extension is
defined on outcomes $S(u,\vecv)$ for $u\in (0,\rho_v]$ and satisfies
\begin{align}
\text{min}\,  & \int_0^{\rho_v} \hat{f}(u,\vecv)^2 du \label{minvarcon} \\
 \text{s.t.} & \int_0^{\rho_v} \hat{f}(u,\vecv) du= f(\vecv)-M \nonumber \\
 & \forall u,  \int_u^{\rho_v} \hat{f}(x,\vecv)dx \leq \underline{f}^{(\vecv)}(u)-M \nonumber \\
 & \forall u,  \hat{f}(u,\vecv) \geq 0 \nonumber
\end{align}

 For $\rho_v\in (0,1]$ and $M\in [0,\underline{f}^{(\vecv)}(\rho_v)]$, we
define the function
$\hat{f}^{(\vecv,\rho_v,M)}:(0,\rho_v]\rightarrow R_+$ as the solution of
\begin{equation}
\hat{f}^{(\vecv,\rho_v,M)}(u) =\inf_{0\leq \eta < u}
\frac{\underline{f}^{(\vecv)}(\eta)-M-\int_u^{\rho_v}\hat{f}^{(\vecv,\rho_v,M)}(u)du}{\rho-\eta}\ . 
\end{equation}

Geometrically, the function $\hat{f}^{(\vecv,\rho_v,M)}$  is the negated derivative of the lower hull of the lower bound function $\underline{f}^{(\vecv)}$ on $(0,\rho_v)$ and the point $(\rho_v,M)$.

\begin{theorem}  \label{voptlh} \cite{CKsharedseed:2012}
Given a partially specified
estimator $\hat{f}$ so that $\rho_v>0$ and
$M=\int_{\rho_v}^1 \hat{f}(u,\vecv)du$, then
$\hat{f}^{(\vecv,\rho_v,M)}$ is the unique (up to equivalence)
$\vecv$-optimal extension of $\hat{f}$.
\end{theorem}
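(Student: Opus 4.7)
The plan is to recast the optimization problem \eqref{minvarcon} as a constrained variational problem over the antiderivative $H(u) = M + \int_u^{\rho_v} \hat{f}(x,\vecv)\,dx$ and then invoke an obstacle-type argument to identify the lower hull as the unique minimizer. First I would translate each constraint: nonnegativity of $\hat{f}$ becomes the condition that $H$ is non-increasing; the unbiasedness equality $\int_0^{\rho_v}\hat{f} = f(\vecv)-M$ becomes $\lim_{u\to 0^+}H(u)=f(\vecv)$, where this limit is meaningful by \eqref{nec_req}; the bound $\int_u^{\rho_v}\hat{f}\,dx \leq \underline{f}^{(\vecv)}(u)-M$ becomes the pointwise obstacle $H(u)\leq \underline{f}^{(\vecv)}(u)$ on $(0,\rho_v]$; and the endpoint $H(\rho_v)=M$ is fixed. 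The objective $\int_0^{\rho_v}\hat{f}^2 du$ rewrites as $\int_0^{\rho_v}(H'(u))^2\,du$, a strictly convex functional of $H'$.

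Next I would characterize the minimizer $H^*$ as the largest convex function on $[0,\rho_v]$ that passes through $(\rho_v,M)$, satisfies $H^*(0^+)=f(\vecv)$, and lies below $\underline{f}^{(\vecv)}$ pointwise. On any open subinterval where $H^* < \underline{f}^{(\vecv)}$ strictly, take compactly supported variations $H^*+\varepsilon\phi$ with $\int \phi' = 0$; the Euler-Lagrange equation $H^{*\prime\prime}=0$ then forces $H^*$ to be affine there. Hence $H^*$ is a concatenation of linear segments joining points where it touches the obstacle (or the fixed endpoint), which is exactly the greatest convex minorant of $\underline{f}^{(\vecv)}$ augmented by the point $(\rho_v,M)$, i.e. the lower hull $H^{(\vecv)}_f$ described in the theorem. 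The negated derivative $\hat{f}^{(\vecv,\rho_v,M)}= -(H^*)'$ is then nonnegative (since $H^*$ is non-increasing, using left-continuity of $\underline{f}^{(\vecv)}$ from \eqref{proplb} and $M\leq \underline{f}^{(\vecv)}(\rho_v)$) and matches the fixed-point definition in the theorem statement.

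For uniqueness, I would exploit strict convexity of the objective: given any feasible $\tilde H$ with the same objective value, the convex combination $\tfrac{1}{2}(H^*+\tilde H)$ is also feasible (the non-increasing, below-obstacle, and endpoint constraints are all convex), and the equality case of Cauchy-Schwarz forces $\tilde H' = (H^*)'$ almost everywhere. Translating back to estimators, this yields equivalence in the sense of \eqref{lebesguediff}, giving the ``up to equivalence'' uniqueness.

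The hardest step will be executing the obstacle-problem argument rigorously: one must first ensure the infimum is attained, which I would obtain by taking a minimizing sequence (finite second moment is guaranteed by \eqref{bounded_var_shared_nec_req} applied where relevant) and passing to a weak $L^2$ limit on $(0,\rho_v]$ together with pointwise convergence of the antiderivatives $H$; one must then argue that the obstacle constraint survives this limit despite $\underline{f}^{(\vecv)}$ possibly being discontinuous, for which left-continuity from \eqref{proplb} is crucial. Once existence and the local affinity on non-contact intervals are established, identification with the lower hull and the negated-slope formula are direct geometric consequences, and verification that the resulting $\hat{f}^{(\vecv,\rho_v,M)}$ satisfies the stated fixed-point equation (with the infimum over $\eta$ corresponding to the convex-hull contact point to the left of $u$) completes the proof.
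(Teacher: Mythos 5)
This theorem is imported from \cite{CKsharedseed:2012}; the present paper states it without proof (only the geometric gloss that $\hat{f}^{(\vecv,\rho_v,M)}$ is the negated slope of the lower hull of $\underline{f}^{(\vecv)}$ on $(0,\rho_v)$ together with the point $(\rho_v,M)$), so there is no in-text proof to compare against. Judged on its own terms, your taut-string/obstacle reformulation is the right idea and does reproduce that characterization: the translation of \eqref{minvarcon} into constraints on $H(u)=M+\int_u^{\rho_v}\hat f(x,\vecv)\,dx$ is correct, and the minimizer of $\int (H')^2$ under a one-sided upper obstacle with pinned endpoints is indeed the greatest convex minorant. Three repairable points would tighten it. First, you can avoid the existence-by-minimizing-sequence step altogether: with $H^*$ the lower hull and $H$ any feasible competitor, expand $\int_0^{\rho_v}(H')^2=\int((H^*)')^2+2\int (H^*)'(H-H^*)'+\int((H-H^*)')^2$, integrate the cross term by parts, and use that the nonnegative measure $(H^*)''$ is supported on the contact set, where $H\leq H^*$, while $H-H^*$ vanishes at $\rho_v$ (pinned value $M$) and at $0^+$ (by \eqref{nec_req}); this yields optimality and uniqueness up to \eqref{lebesguediff} in one stroke, with no compactness argument. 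Second, your Euler--Lagrange variations are two-sided, but the feasible set also carries the monotonicity constraint $H'\leq 0$, which such variations may violate; the clean fix is to minimize over the relaxed set without that constraint and then check that the hull is automatically non-increasing (since $\underline{f}^{(\vecv)}$ is non-increasing and $M\leq\underline{f}^{(\vecv)}(\rho_v)$, the constant $M$ is a convex minorant, so the hull is $\geq M$ everywhere with value exactly $M$ at $\rho_v$, ruling out positive slope), so the relaxed minimizer solves the original problem. Third, because every feasible $H$ is absolutely continuous while $\underline{f}^{(\vecv)}$ is only left-continuous \eqref{proplb}, the effective obstacle is the right-limit regularization $u\mapsto\underline{f}^{(\vecv)}(u^+)$; both the uniform-gap claim needed for your compactly supported variations and the contact-set argument should be run against this regularized obstacle, which is precisely where the discontinuity worry you flag gets resolved. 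Finally, note the statement is only meaningful when some extension has finite second moment (cf.\ \eqref{bounded_var_shared_nec_req}); your appeal to that condition is appropriately placed.
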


The {\em $\vecv$-optimal} estimates are the minimum variance extension of
the empty specification. We use $\rho_v=1$ and $M=0$ and obtain
 $\hat{f}^{(\vecv)}\equiv \hat{f}^{(\vecv,1,0)}$.
$\hat{f}^{(\vecv)}$ is the solution of
\begin{equation}  \label{PWopt}
\hat{f}^{(\vecv)}(u) =\inf_{0\leq \eta < u}
\frac{\underline{f}^{(\vecv)}(\eta)-\int_u^1 \hat{f}^{(\vecv)}(u)du}{\rho-\eta}\ ,
\end{equation}
which is the negated slope of the lower hull of the lower bound function
$\underline{f}^{(\vecv)}$.
This is illustrated in Example \ref{example3}.

\smallskip
\noindent
{\bf Admissibility and order optimality.}
  An estimator is  {\em admissible} if there is no
  (nonnegative unbiased) estimator with same or lower variance on all data
and strictly lower on some data.
We also consider {\em order optimality}, specified  with respect to
a partial order $\prec$ on ${\bf V}$:
An estimator $\hat{f}$ is
{\em $\prec^{+}$-optimal}
 if there is no other nonnegative unbiased estimator with
strictly lower variance on some data $\vecv$ and at most the
variance of  $\hat{f}$ on all vectors that precede $\vecv$.
Order-optimality (with respect to some $\prec$) implies admissibility
but the converse is not true in general~\cite{CK:pods11}.

\smallskip
\noindent
{\bf Variance competitiveness}
An estimator $\hat{f}$ is {\em $c$-competitive} if
$$\forall \vecv,\, \int_0^1 \bigg(\hat{f}(u,\vecv)\bigg)^2du \leq c \inf_{\hat{f}'}\int_0^1 \bigg(\hat{f}'(u,\vecv)\bigg)^2du ,$$
where the infimum is over all unbiased nonnegative estimators of $f$.
When the estimator is unbiased, the expectation of the
square is closely related to variance, and an estimator that minimizes
one also minimizes the other.
\begin{align}
\var[\hat{f} | \vecv] 
&=\int_0^1 \hat{f}(u,\vecv)^2 du -f(\vecv)^2 \label{var2moment}
\end{align}

\section{The optimal range}  \label{poptr:sec}

\begin{figure}[htbp]
\centerline{
\ifpdf
\includegraphics[width=0.44\textwidth]{POrange}
\else
 \epsfig{figure=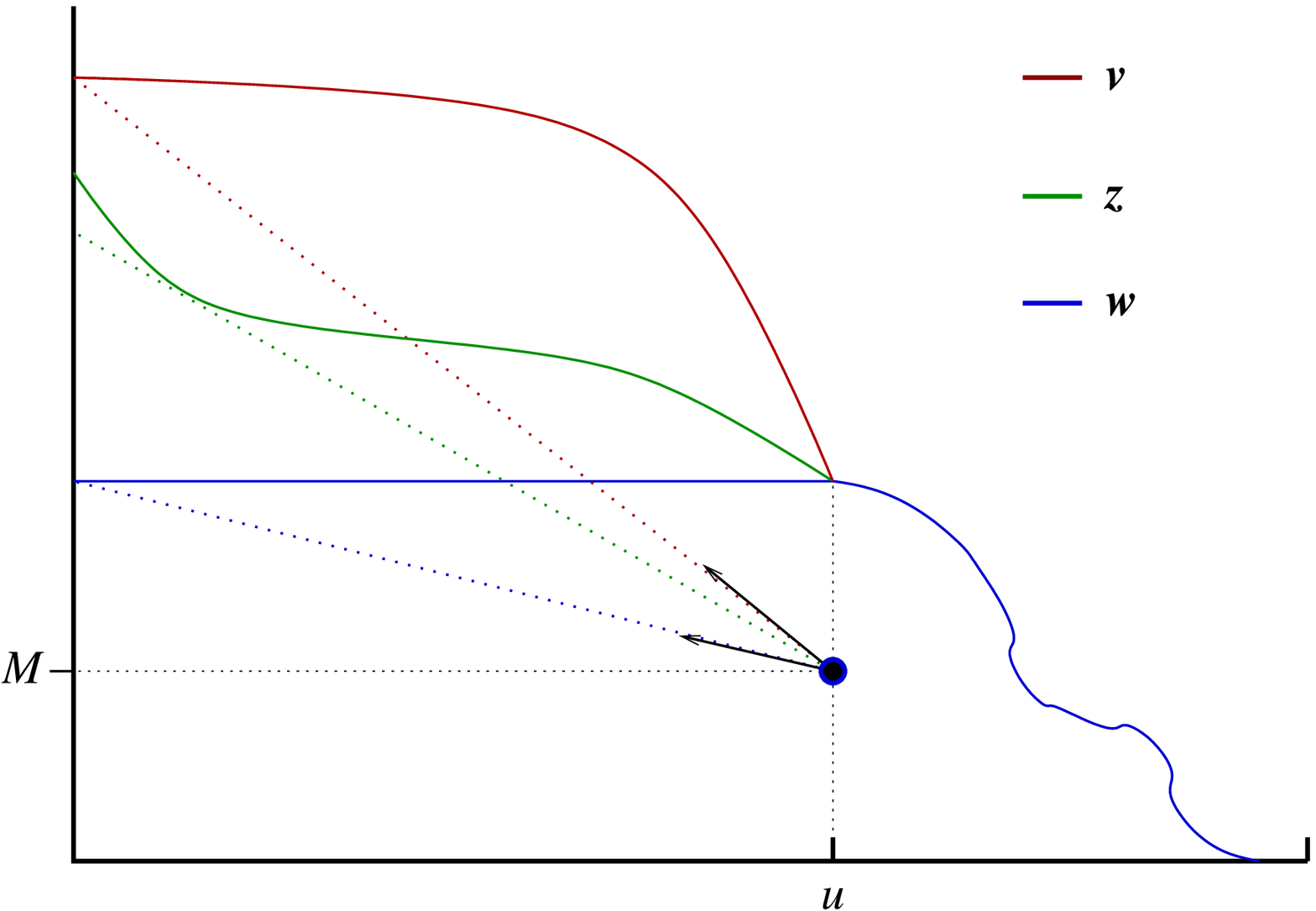,width=0.44\textwidth}
\fi
}
\caption{Lower bound functions for vectors $\vecv,\vecz,\vecw$.  Outcomes are consistent
for all $x\geq u$: $S(x,\vecv)=S(x,\vecz)=S(x,\vecw)\equiv S_x$.  The
figure illustrates the $\vecy$-optimal estimates $\lambda(u,\vecy,M)$ at $u$ given $M$ for 
${\bf y}\in \{\vecv,\vecz,\vecw\}$.
The estimates are the negated slopes of the lower hull of the point $(u,M)$
and the lower bound function $\underline{f}^{(\vecy)}$.
The optimal range at $S_u$ given $M$ is lower-bounded by $\vecw$, that is
$\lambda_L(S_u,M)=\lambda(u,\vecw,M)$,  and upper-bounded by $\vecv$, 
$\lambda_U(S_u,M)=\lambda(u,\vecv,M)$.  The figure illustrates the
general property  that
the optimal range is lower bounded by the $\vecw$ which satisfies
$f(\vecw)= \underline{f}(\vecw,u)$.
 \label{POrange:fig}}
\end{figure}

We say that an estimator 
$\hat{f}$ is
$\vecv$-optimal {\em at an outcome $S(u,\vecv)$}
 if it satisfies \eqref{PWopt}.  
For an outcome $S(\rho,\vecv)$, we are interested in the {\em range} of
 $\vecz$-optimal estimates at $S$ for all
$\vecz\in S^*$,  with respect to a value $M$, which captures 
the contribution to the expectation of the estimator made by outcomes
which are less informative than $S$.

\begin{align}
\lambda(\rho,\vecv,M) &=\inf_{0\leq \eta < \rho}
\frac{\underline{f}(\eta,\vecv)-M}{\rho-\eta}\ \label{lambdaMdef}\\
\lambda_U(\rho,\vecv,M) 
&\equiv \lambda_U(S,M)
=\sup_{\vecz\in S^*(\rho,\vecv)}
\lambda(\rho,\vecz,M)\ \label{lambdaULdef}\\
\lambda_L(\rho,\vecv,M) 
&\equiv \lambda_L(S,M)
=\inf_{\vecz\in S^*(\rho,\vecv)}
\lambda(\rho,\vecz,M)  \nonumber  \\
&= \inf_{\vecz\in S^*(\rho,\vecv)} \inf_{0\leq \eta < \rho}
\frac{\underline{f}(\eta,\vecz)-M}{\rho-\eta}\  \nonumber \\ &
=\frac{\underline{f}(\rho,\vecv)-M}{\rho}\label{lambdaMLdef}
\end{align}
To verify equality  \eqref{lambdaMLdef}, observe that from
left continuity of $\underline{f}(u,\vecz)$,
$$\inf_{\eta<\rho,\ \vecz\in S^*} \underline{f}(\eta,\vecz)=\underline{f}(\rho,\vecv)$$
and that the denominator $\rho-\eta$ is maximized at $\eta=0$.
$\lambda(\rho,\vecv,M)$ is the $\vecv$-optimal estimate
at $\rho$, given a specification of the estimator $\hat{f}(u,\vecv)$ for $u\in
(\rho,1]$
with $\int_{\rho}^1 \hat{f}(u,\vecv)du = M$.  
In short, we refer to $\lambda(\rho,\vecv,M)$ as the $\vecv$-optimal
estimate at $\rho$ given $M$.
Geometrically, $\lambda(\rho,\vecv,M)$ is the negated slope
of the lower hull of $\underline{f}^{(\vecv)}$ and the point
$(\rho,M)$.
$\lambda_U(S,M)$ and
$\lambda_L(S,M)$, respectively, are the supremum and
infimum of the {\em range} of $\vecz$-optimal estimates at $S$ given
$M$.
Figure~\ref{POrange:fig} illustrates an outcome $S$ and the optimal
range at $S$ given $M$.
We can see how the lower endpoint of
the range is realized by a vector with $f$ value equal to the lower
bound at $S$, as in equality  \eqref{lambdaMLdef}.

 When $\hat{f}$ is provided for seed values $u\in (\rho,1]$,  we use $M=\int_{\rho}^1
\hat{f}(u,\vecv)du$.  We then abbreviate
the notations (we remove $M$) to
$\lambda(\rho,\vecv)$, $\lambda_U(S)$, and
$\lambda_L(S)$.


We say that the estimator $\hat{f}$ 
is {\em in-range} (in the
optimal range ) at outcome $S(\rho,\vecv)$ if
\begin{equation} \label{inporange}
\lambda_L(S) \leq \hat{f}(S)\leq
\lambda_U(S)\ .
\end{equation}

 Writing \eqref{inporange} explicitly, we obtain
{\small
\begin{subequations}\label{necessarydomshared}
\begin{eqnarray}
\hat{f}(\rho,\vecv) & \geq &  \lambda_L(\rho,\vecv) 
= \frac{\underline{f}(\rho,\vecv)- \int_{\rho}^1 \hat{f}(u,\vecv) du}{\rho}  \label{b1:ineq}\\
\hat{f}(\rho,\vecv) & \leq &  \lambda_U (\rho,\vecv)  \nonumber\\ &=&  \sup_{\vecz\in S^*}\inf_{0\leq \eta < \rho} \frac{\underline{f}(\eta,\vecz)-\int_{\rho}^1 \hat{f}(u,\vecv) du}{\rho-\eta} \label{b3:ineq}
\end{eqnarray}
\end{subequations}
}

Two special solutions  
that we study are the {\em \L\ estimator} ($\hat{f}^{(L)}$, see Section~\ref{Lest:sec})
and the {\em \U\ estimator} ($\hat{f}^{(U)}$, see Section~\ref{Uest:sec}), which respectively solve
\eqref{b1:ineq} and \eqref{b3:ineq} with equalities.
For all $\rho\in (0,1]$ and $\vecv$, $\hat{f}^{(L)}$ minimizes and
$\hat{f}^{(U)}$ maximizes $\int_{\rho}^1 \hat{f}(u,\vecv)du$ among
all solutions of \eqref{inporange}.
\ignore{$\hat{f}$ is equivalent to
a solution of \eqref{necessarydomshared} if and only if
\begin{subequations}\label{altnds}
\begin{eqnarray}
\lefteqn{\forall \rho\in (0,1]\ \forall \vecv,} \nonumber\\
&& \int_{\rho}^1 \hat{f}(u,\vecv)du  \geq \int_{\rho}^1 \hat{f}^{(L)}(u,\vecv)du \label{altnds1} \\
&& \int_{\rho}^1 \hat{f}(u,\vecv)du \leq \int_{\rho}^1 \hat{f}^{(U)}(u,\vecv)du\ . \label{altnds3}
\end{eqnarray}
\end{subequations}
}


 We show that being in-range (satisfying \eqref{inporange} for all
 outcomes $S$) is sufficient for nonnegativity and unbiasedness.
\begin{lemma} \label{rangeunon}
If $f$ satisfies \eqref{nec_req}
then any in-range  estimator 
is unbiased and nonnegative.
\end{lemma}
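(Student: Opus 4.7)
The plan is to work, for each fixed data vector $\vecv$, with the cumulative tail $G(\rho):=\int_\rho^1 \hat{f}(u,\vecv)\,du$. By Lebesgue integrability of $\hat{f}$, this is continuous, satisfies $G(1)=0$, and has $G'(\rho)=-\hat{f}(\rho,\vecv)$ almost everywhere. The two goals then become $\hat{f}\ge 0$ a.e.\ (nonnegativity) and $\lim_{\rho\to 0^+}G(\rho)=f(\vecv)$ (unbiasedness), and I will extract them from the two in-range inequalities \eqref{b1:ineq}--\eqref{b3:ineq} in turn.

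The first step is to use the upper bound $\hat{f}(\rho,\vecv)\le \lambda_U(\rho,\vecv,G(\rho))$ to show $G(\rho)\le \underline{f}(\rho,\vecv)$ for every $\rho\in(0,1]$. The crucial observation is that $S^*(\rho,\vecz)=S^*(\rho,\vecv)$ for every $\vecz\in S^*(\rho,\vecv)$, so $\underline{f}(\rho,\vecz)=\underline{f}(\rho,\vecv)$ uniformly across such $\vecz$. Assuming toward contradiction that $G(\rho_0)>\underline{f}(\rho_0,\vecv)$, the left-continuity of each $\underline{f}(\cdot,\vecz)$ at $\rho_0$ keeps the numerator $\underline{f}(\eta,\vecz)-G(\rho_0)$ bounded away from $0$ and negative for $\eta\uparrow\rho_0$, while the denominator $\rho_0-\eta$ vanishes, forcing $\lambda(\rho_0,\vecz,G(\rho_0))=-\infty$ for every $\vecz\in S^*(\rho_0,\vecv)$ and hence $\lambda_U(\rho_0,\vecv,G(\rho_0))=-\infty$. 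Since $G$ is continuous and $\underline{f}^{(\vecv)}$ is left-continuous, the strict inequality $G>\underline{f}^{(\vecv)}$ persists on a left-neighborhood of $\rho_0$, so the in-range hypothesis $\hat{f}\le\lambda_U=-\infty$ would fail on a set of positive measure, contradicting that $\hat{f}$ is finite-valued. Hence $G(\rho)\le\underline{f}^{(\vecv)}(\rho)$ everywhere, and the lower in-range bound immediately yields $\hat{f}(\rho,\vecv)\ge(\underline{f}^{(\vecv)}(\rho)-G(\rho))/\rho\ge 0$.

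For unbiasedness, I would recast the same lower bound as a differential inequality: $\hat{f}\ge(\underline{f}^{(\vecv)}-G)/\rho$ together with $G'=-\hat{f}$ gives $\frac{d}{d\rho}[G(\rho)/\rho]\le -\underline{f}^{(\vecv)}(\rho)/\rho^2$ a.e. Integrating from $\rho$ to $1$ with $G(1)=0$ yields $G(\rho)\ge \rho\int_\rho^1 \underline{f}^{(\vecv)}(u)/u^2\,du$. The substitution $w=\rho/u$ rewrites this as $G(\rho)\ge \int_\rho^1 \underline{f}^{(\vecv)}(\rho/w)\,dw$; by \eqref{nec_req} the integrand converges pointwise to $f(\vecv)$ as $\rho\to 0^+$ and is bounded by $f(\vecv)$, so dominated convergence gives $\liminf_{\rho\to 0^+}G(\rho)\ge f(\vecv)$. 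Combined with the upper bound $G(\rho)\le\underline{f}^{(\vecv)}(\rho)\to f(\vecv)$, this sandwiches $G$ and yields $\int_0^1\hat{f}(u,\vecv)\,du=f(\vecv)$.

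The main obstacle is the upper bound $G\le\underline{f}^{(\vecv)}$: the in-range condition is only almost-everywhere and $\lambda_U$ is a supremum that could a priori remain finite even when one term inside is $-\infty$. The argument succeeds only because every $\vecz\in S^*(\rho,\vecv)$ has the same value $\underline{f}(\rho,\vecz)=\underline{f}(\rho,\vecv)$, making every term of the sup equal to $-\infty$ simultaneously, and because continuity of $G$ and left-continuity of $\underline{f}^{(\vecv)}$ convert a single pointwise violation into one of positive measure, which is what actually contradicts the hypothesis.
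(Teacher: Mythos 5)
Your proof is correct, but it reaches both conclusions by a different route than the paper. For nonnegativity, the paper also reduces to showing $\int_\rho^1 \hat{f}(u,\vecv)\,du \leq \underline{f}^{(\vecv)}(\rho)$, but it argues at the supremum of violating points: there it bounds $\hat{f}(x,\vecv)$ above by the negated left difference quotient of $\underline{f}^{(\vecv)}$ (using that $\underline{f}(\eta,\vecz)=\underline{f}(\eta,\vecv)$ for $\eta$ slightly below $x$) and integrates this bound over an interval of persistent violation to reach a contradiction. You instead observe that at any single violating point the point $(\rho_0,G(\rho_0))$ lies strictly above the lower bound function of \emph{every} $\vecz\in S^*(\rho_0,\vecv)$ (since $\underline{f}(\rho_0,\vecz)=\underline{f}(\rho_0,\vecv)$ and $\underline{f}(\cdot,\vecz)$ is left-continuous), so $\lambda(\rho_0,\vecz,G(\rho_0))=-\infty$ for all such $\vecz$ and hence $\lambda_U=-\infty$; continuity of $G$ and left-continuity of $\underline{f}^{(\vecv)}$ then upgrade this to a positive-measure failure of the in-range hypothesis. (You apply the $-\infty$ argument implicitly at each point $\eta$ of the neighborhood with $M=G(\eta)$ rather than $G(\rho_0)$; that is fine, just worth stating.) This is shorter and avoids the paper's somewhat delicate step of establishing that the violation persists on a full interval to the left of the supremum point. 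For unbiasedness, the paper proves $\lim_{x\to 0}\int_x^1\hat f \geq \underline f^{(\vecv)}(\rho)$ by an elementary halving argument showing the deficit $\Delta$ decays geometrically, $\Delta(\rho/2^i)\le (3/4)^i\Delta(\rho)$; you instead integrate the lower in-range bound as a differential inequality for $G(\rho)/\rho$ and obtain the exact bound $G(\rho)\geq\rho\int_\rho^1\underline f^{(\vecv)}(u)u^{-2}du$, then conclude by \eqref{nec_req} and dominated convergence. Your bound is sharper and has the pleasant by-product of identifying the right-hand side with the cumulative estimate of the \L\ estimator \eqref{LBgen}, i.e.\ it re-derives the paper's remark that $\hat f^{(L)}$ minimizes $\int_\rho^1\hat f(u,\vecv)\,du$ among in-range estimators; the cost is that you must invoke absolute continuity of $G$ (hence of $G(\rho)/\rho$ on $[\rho,1]$) and the fundamental theorem of calculus for the a.e.\ derivative, where the paper's iterative argument needs only elementary estimates on integrals. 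Both halves of your argument go through under the paper's standing assumptions (integrability of $\hat f(\cdot,\vecv)$, property \eqref{proplb}, and the identity $S^*(\rho,\vecz)=S^*(\rho,\vecv)$ for $\vecz\in S^*(\rho,\vecv)$), so there is no gap.
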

\begin{proof}
For nonnegativity, it
suffices to show that a solution of \eqref{inporange}  satisfies \eqref{nonneg:eq}, since 
\eqref{b1:ineq} and \eqref{nonneg:eq} together imply nonnegativity.
 Assume to the contrary that a solution $\hat{f}$ violates \eqref{nonneg:eq} and
 let $\rho$ be the supremum of $x$ satisfying
$\int_x^1 \hat{f}(u,\vecv)du > \underline{f}(x,\vecv)$.
From \eqref{proplb},  which is monotonicity and left-continuity of $\underline{f}(x,\vecv)$, we have
$\int_\rho ^1 \hat{f}(u,\vecv)du = \underline{f}(\rho,\vecv)$.
Since $\int_x^1 \hat{f}(u,\vecv) du$ is continuous in $x$, and
$\underline{f}^{(\vecv)}$ left-continuous, there must be $\delta>0$
so that 
\begin{equation}\label{tocontra}
\forall x\in[\rho-\delta,\rho), \int_x^1 \hat{f}(u,\vecv)du >
\underline{f}(x,\vecv)\ .
\end{equation}

Let
 $x\in[\rho-\delta,\rho)$ and
$M(x)= \int_x^1 \hat{f}(u,\vecv)du$. From \eqref{tocontra},  $M(x)>\underline{f}(x,\vecv)$.
We have that 
\begin{eqnarray*}
\hat{f}(x,\vecv) &\leq & \sup_{\vecz\in S^*(x,\vecv)}
\inf_{0\leq
  \eta<x}\frac{\underline{f}(\eta,\vecz)-M(x)}{x-\eta}
\\
&\leq & \sup_{\vecz\in S^*(x,\vecv)}
\inf_{0\leq \eta<x}\frac{\underline{f}(\eta,\vecz)-\underline{f}(x,\vecv)}{x-\eta}  \\
& \leq & \sup_{\vecz\in S^*(x,\vecv)}
\lim_{\eta\rightarrow x^{-}} \frac{\underline{f}(\eta,\vecz)-\underline{f}(x,\vecv)}{x-\eta} \\
& = & \lim_{\eta\rightarrow x^{-}} \frac{\underline{f}(\eta,\vecv)-\underline{f}(x,\vecv)}{x-\eta} =  -\frac{\partial \underline{f}(x,\vecv)}{\partial x^{-}}
\end{eqnarray*}

Since this holds for all $x\in(\rho-\delta,\rho)$, we obtain that 
$\int_{\rho-\delta}^\rho \hat{f}(x,\vecv)dx \leq
\underline{f}(\rho-\delta,\vecv)-\underline{f}(\rho,\vecv)$.
Therefore,
$\int_{\rho-\delta}^1 \hat{f}(x,\vecv)dx \leq
\underline{f}(\rho-\delta,\vecv)$, 
which contradicts \eqref{tocontra}.

\ignore{
From \eqref{proplb},  which is monotonicity and left-continuity of $\underline{f}(x,\vecv)$, we have
$\int_\rho ^1 \hat{f}(u,\vecv)du = \underline{f}(\rho,\vecv) \equiv
M$ and
$$\lim_{x\rightarrow \rho^1} \frac{\int_x^\rho \hat{f}(u,\vecv)}{x-\rho} > \lim_{x\rightarrow \rho^{-}} \frac{\underline{f}(x,\vecv)-\underline{f}(\rho,\vecv)}{\rho-x}\ .$$

Let $M=\int_\rho^1 \hat{f}(u,\vecv)du$.  Consider the $\vecz$-optimal
extension
$\hat{f}^{(\vecz,\rho,M)}$ for 
 vectors $\vecz\in S(\rho,\vecv)$,
all these extensions, and thus their supremum satisfy
$$\hat{f}^{(\vecz,\rho,M)}(\rho) \leq \lim_{x\rightarrow \rho^{-}} \frac{\underline{f}(x,\vecv)-\underline{f}(\rho,\vecv)}{\rho-x}$$
and we obtain a contradiction.
\begin{eqnarray*}
\hat{f}(\rho,\vecv) &\leq & \sup_{\vecz\in S^*(\rho,\vecv)}
\inf_{0\leq \eta<\rho}\frac{\underline{f}(\eta,\vecz)-\underline{f}(\rho,\vecv)}{\rho-\eta}  \\
& \leq & \sup_{\vecz\in S^*(\rho,\vecv)}
\lim_{\eta\rightarrow \rho^{-}} \frac{\underline{f}(\eta,\vecz)-\underline{f}(\rho,\vecv)}{\rho-\eta} \\
& = & \lim_{\eta\rightarrow \rho^{-}} \frac{\underline{f}(\eta,\vecv)-\underline{f}(\rho,\vecv)}{\rho-\eta} =  -\frac{\partial \underline{f}(\rho,\vecv)}{\partial \rho^{-}}
\end{eqnarray*}
}

 We now establish unbiasedness.  From \eqref{b1:ineq}  and
$\underline{f}(u,\vecv)$ being non increasing in $u$, we  obtain that
$\forall u \forall \rho>u$,
\begin{eqnarray}
\hat{f}(u,\vecv) & \geq & \frac{\underline{f}(u,\vecv)-\int_u^1 \hat{f}(x,\vecv)dx}{u}\nonumber\\
& \geq & \frac{\underline{f}(\rho,\vecv)-\int_u^1
  \hat{f}(x,\vecv)dx}{u} \label{fraccond}
\end{eqnarray}
We argue that
\begin{equation}\label{claimub}
\forall \vecv \forall \rho>0,\
\lim_{x\rightarrow 0} \int_x^1 \hat{f}(u,\vecv)du \geq \underline{f}(\rho,\vecv)\ .
\end{equation}
To prove  \eqref{claimub},
define $\Delta(x)= \underline{f}(\rho,\vecv)-\int_x^1
\hat{f}(u,\vecv)du$ for $x\in (0,\rho]$.  We show that
 $\int_{x/2}^x \hat{f}(u,\vecv) du \geq \Delta(x)/4$. To
 see this, assume to the contrary that  $\int_y^x \hat{f}(u,\vecv)du\leq
 \Delta(x)/4$ for all $y\in [x/2,x]$.  Then  from
 \eqref{fraccond},  the value of $\hat{f}(u,\vecv)$ for
 $u\in [x/2,x]$ must be at least
$(3/4)\Delta(x)/x$.  Hence, the integral  over the interval
$[x/2,x]$ is at
least $(3/8)\Delta(x)$ which is a contradiction.  We can now apply
this iteratively, obtaining that $\Delta(\rho/2^i) \leq (3/4)^i
\Delta(\rho)$.
Thus, the gap $\Delta(x)$ diminishes as $x\rightarrow 0$ and we
established \eqref{claimub}.

Since \eqref{claimub} holds for all $\rho \geq 0$, then
$\lim_{u\rightarrow 0} \int_u^1 \hat{f}(u,\vecv)du \geq \lim_{u\rightarrow 0}\underline{f}(u,\vecv)= f(\vecv)$
(using \eqref{nec_req}).  Combining with (already established)
\eqref{nonneg:eq} we obtain
$\lim_{u\rightarrow 0} \int_u^1 \hat{f}(u,\vecv)du = f(\vecv)$.
\end{proof}

  We next show that being in-range is necessary for optimality.
For our analysis of order-optimality
(Section~\ref{estPREC:sec}), we need to 
slightly refine the notion of
admissibility to be with respect to
a partially specified estimator $\hat{f}$ and a
subset of data vectors $Z\subset {\bf V}$.

An extension of $\hat{f}$  that is
fully specified for all vectors in $Z$ is 
 admissible on $Z$ if
any other extension with strictly lower variance on at least one $\vecv\in Z$ has a strictly higher variance on at least one $\vecz\in Z$.
  We say that a partial specification is in-range {\em with respect to $Z$} if:
\begin{eqnarray}
\forall \vecv\in Z, \text{ for }  \rho\in (0,\rho_v] \text{ almost
  everywhere, }\quad &&\nonumber\\
 \inf_{\vecz\in Z\cap S^*(\rho,\vecv)} \lambda(\rho,\vecz) \leq  \hat{f}(\rho,\vecv)  \leq   \sup_{\vecz\in Z\cap
S^*(\rho,\vecv)} \lambda(\rho,\vecz) && \label{necessary}
\end{eqnarray}
Using \eqref{lebesguediff}, \eqref{necessary} is the same as
requiring that $\forall \vecv\ \forall \rho\in (0,\rho_v]$, when
fixing the estimator on $S(u,\vecv)$ for $u\geq \rho$, then
{\small
\begin{align}
 \inf_{\vecz\in Z\cap S^*(\rho,\vecv)}
\lambda(\rho,\vecz) &\leq 
\lim_{\eta\rightarrow \rho^-}\frac{\int_\eta^{\rho} \hat{f}(u,\vecv)du}{\rho-\eta} 
\leq   \sup_{\vecz\in Z\cap S^*(\rho,\vecv)} \lambda(\rho,\vecz) \label{inecessary}
\end{align}
}

We show that a necessary condition for admissibility with
respect to a partial specification and $Z$ is that almost everywhere,
estimates for outcomes consistent with vectors in $Z$ are in-range
for $Z$.  Formally: 
\begin{theorem} \label{domshared:thm}  \label{rvarplus}
An extension is admissible on $Z$ only if \eqref{necessary}
holds.
\end{theorem}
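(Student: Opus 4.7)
The plan is to prove the contrapositive: if an extension $\hat{f}$ of the partial specification fails \eqref{necessary}, I construct a dominating extension on $Z$. Assume \eqref{necessary} fails for some $\vecv \in Z$: on a positive-measure set $B \subseteq (0, \rho_v]$, the value $\hat{f}(\rho, \vecv)$ lies strictly outside $[\inf_{\vecz \in Z \cap S^*(\rho, \vecv)} \lambda(\rho, \vecz),\ \sup_{\vecz \in Z \cap S^*(\rho, \vecv)} \lambda(\rho, \vecz)]$. By passing to a positive-measure subset of $B$ I may assume the violation is one-sided, and I will treat the ``above'' case where $\hat{f}(\rho, \vecv) > \sup_{\vecz \in Z \cap S^*(\rho,\vecv)} \lambda(\rho, \vecz)$ throughout $B$ (the ``below'' case is symmetric). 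The key structural consequence is simultaneity: on $B$, for \emph{every} $\vecz \in Z \cap S^*(\rho, \vecv)$, the current estimate $\hat{f}(\rho, \vecv)$ strictly exceeds the $\vecz$-optimal value $\lambda(\rho, \vecz)$ given the suffix of $\hat{f}$.

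Next I would build a competitor $\hat{f}'$ using Theorem~\ref{voptlh}. Choose $\rho^* \in (0, \rho_v]$ so that $B \cap (0, \rho^*]$ has positive measure, set $M = \int_{\rho^*}^1 \hat{f}(u, \vecv)\,du$, and let $g = \hat{f}^{(\vecv, \rho^*, M)}$ be the $\vecv$-optimal extension. Define $\hat{f}' := g$ on outcomes $S(u, \vecv)$ for $u \in (0, \rho^*]$ and $\hat{f}' := \hat{f}$ elsewhere. Since $\hat{f}$ disagrees with $g$ on a positive-measure subset of $(0, \rho^*]$ (the violation forces $\hat{f}$ off the lower hull of $\underline{f}^{(\vecv)}$ anchored at $(\rho^*, M)$), Theorem~\ref{voptlh} together with \eqref{var2moment} gives $\int_0^1 \hat{f}'(u, \vecv)^2\,du < \int_0^1 \hat{f}(u, \vecv)^2\,du$, hence a strict variance drop at $\vecv$.

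The main obstacle is verifying that $\hat{f}'$ is a nonnegative unbiased extension of the partial specification and that no $\vecz \in Z$ suffers a variance increase. For $\vecz$ whose outcomes on $(0, \rho^*]$ never coincide with $\vecv$'s, nothing changes; otherwise, by \eqref{openset:lemma}, the coincidence set is a suffix $(\rho^{**}, \rho^*]$, on which $\hat{f}'(u, \vecz) = g(u)$. Because $\hat{f}(\rho, \vecv) > \lambda(\rho, \vecz)$ throughout $B$ and $g$ is the negated slope of the lower hull of $\underline{f}^{(\vecv)}$, one has $\int_\eta^{\rho^*} g(u)\,du \leq \underline{f}(\eta, \vecv) - M \leq \underline{f}(\eta, \vecz) - M$ for $\eta$ in the shared suffix, certifying the nonnegativity constraint \eqref{b1:ineq} for $\vecz$; a direct $L^2$ comparison between $g$ and $\hat{f}$ on the shared suffix, using the ``above'' hypothesis, then shows the variance contribution does not grow. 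The final subtlety, which I flag as the crux, is restoring unbiasedness for those $\vecz$ whose shared suffix is a strict subset of $(0, \rho^*]$, since altering $\hat{f}$ only on $\vecv$-consistent outcomes can shift $\int_0^1 \hat{f}'(u, \vecz)\,du$ away from $f(\vecz)$; I would apply Lemma~\ref{completioncoro} to rebalance on $\vecz$-exclusive outcomes inside $(0, \rho^{**}]$ in a nonnegative, variance-neutral manner, which does not undo the improvement for $\vecv$. Assembling these pieces yields the Pareto improvement on $Z$ that contradicts admissibility.
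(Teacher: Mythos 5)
Your overall strategy (contrapositive, split into an ``above'' case and a ``below'' case, build a competitor, cite Theorem~\ref{voptlh}) is sound in outline, but the construction of the competitor is fundamentally different from the paper's and does not close. The paper makes a \emph{local flattening} move: on a small shared suffix $(\rho-\epsilon,\rho]$ (where all $\vecz\in S^*(\rho,\vecv)$ see the same outcome) the estimate strictly exceeds $U$; by the nonnegativity constraint \eqref{mustnn}, each $\vecz$ must have a compensating region on its own branch $(\eta_z,\zeta_z)$ where the estimate dips \emph{below} $U$. The paper pushes both pieces toward $U$ while preserving the integral along each branch. This simultaneously decreases the second moment for \emph{every} $\vecz\in Z\cap S^*(\rho,\vecv)$ (a majorization argument), preserves unbiasedness automatically, and touches no outcome outside $S^*(\rho,\vecv)$.

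Your proposal replaces $\hat{f}$ on all of $(0,\rho^*]$ by the $\vecv$-optimal extension $g=\hat{f}^{(\vecv,\rho^*,M)}$. That object is tuned only to $\vecv$; nothing about it controls the variance for another $\vecz\in Z\cap S^*(\rho,\vecv)$. Three concrete failures follow. (i) The claim that ``a direct $L^2$ comparison between $g$ and $\hat{f}$ on the shared suffix \ldots\ shows the variance contribution does not grow'' is not established and is false in general: $g$ is only $L^2$-optimal for $\vecv$ on the \emph{whole} interval $(0,\rho^*]$, and on the shared sub-suffix $(\rho^{**},\rho^*]$ it can have strictly larger $L^2$-norm than $\hat f$ did. (ii) Your new estimator $\hat{f}'$ is not unbiased for $\vecz$ when $\rho^{**}>0$, because $\int_{\rho^{**}}^{\rho^*}g\,du$ need not equal $\int_{\rho^{**}}^{\rho^*}\hat f(u,\vecv)\,du$; you acknowledge this and try to patch it via Lemma~\ref{completioncoro}. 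But Lemma~\ref{completioncoro} only guarantees that \emph{some} nonnegative unbiased completion exists; it gives no control over the resulting variance, so ``variance-neutral rebalancing'' is not available from it. Moreover, if the shared-suffix integral went \emph{up}, rebalancing means \emph{adding} estimate mass on $(0,\rho^{**}]$, which strictly increases the second moment there, and the change propagates to all $\vecw\in S^*(u,\vecz)$ consistent with those outcomes, cascading beyond a single vector. (iii) Even nonnegativity for $\vecz$ can break: for $\eta<\rho^{**}$ the inequality $\underline{f}(\eta,\vecv)\le\underline{f}(\eta,\vecz)$ you lean on fails in general (the branches diverge and either lower bound function can be larger), so the constraint $\int_\eta^1\hat f'(u,\vecz)\,du\le\underline f(\eta,\vecz)$ is not certified for such $\eta$.

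In short, the missing idea is exactly the one the paper uses: exploit the \emph{forced} trade-off between the over-$U$ region on the shared suffix and the under-$U$ compensating region on each $\vecz$-branch, and flatten both toward $U$. Wholesale substitution of the $\vecv$-optimal extension does not produce a Pareto improvement on $Z$, and the appeal to Lemma~\ref{completioncoro} cannot repair that.
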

\begin{proof}
Consider  an (nonnegative unbiased) estimator $\hat{f}$ that violates \eqref{necessary} for some $\vecv\in Z$ and $\rho$.  We show that there is an alternative estimator,
equal to $\hat{f}(u,\vecv)$ on outcomes $u>\rho$ and which satisfies
\eqref{necessary} at $\rho$ that has strictly lower variance than
$\hat{f}$ on all vectors $Z\cap S^*(\rho,\vecv)$.  This will show
that $\hat{f}$ is not admissible on $Z$.

The estimator $\hat{f}$ violates \eqref{inecessary}, so either
\begin{equation}\label{svio}
\lim_{\eta\rightarrow \rho^-}\frac{\int_\eta^{\rho} \hat{f}(u,\vecv)du}{\rho-\eta} <  \inf_{\vecz\in Z\cap S^*(\rho,\vecv)} \lambda(\rho,\vecz)\equiv L
\end{equation}
or
\begin{equation}\label{lvio}
\lim_{\eta\rightarrow \rho^-}\frac{\int_\eta^{\rho} \hat{f}(u,\vecv)du}{\rho-\eta} >  \sup_{\vecz\in Z\cap S^*(\rho,\vecv)}
\lambda(\rho,\vecz) \equiv U\ .
\end{equation}
Violation \eqref{lvio}, for a nonnegative unbiased $\hat{f}$,  means
that 
$M\equiv \int_{\rho}^1 \hat{f}(u,\vecv)du <
\underline{f}(u,\vecv)$.  Consider
$\vecz\in Z\cap
S^*(\rho,\vecv)$ and
the $\vecz$-optimal extension,
$\hat{f}^{(\vecz,\rho,M)}$ (see Theorem~\ref{voptlh}).
Because the point $(\rho,M)$ lies strictly below
$\underline{f}^{(\vecz)}$,  the lower hull of both the point and
$\underline{f}^{(\vecz)}$  has a
linear piece on some interval with right end point $\rho$.  More precisely,
$\hat{f}^{(\vecz,\rho,M)}(u) \equiv \lambda(\rho,\vecz,M)$ on $S(u,\vecz)$ at
some nonempty interval $u\in (\eta_z,\rho]$ so that at the point $\eta_z$, the
lower bound is met, that is, $M +
(\rho-\eta_z) \lambda(\rho,\vecz,M) = \lim_{u\rightarrow \eta_z^+}
\underline{f}(u,\vecz)$.  Therefore, all extensions (maintaining
nonnegativity and  unbiasedness) must
satisfy
\begin{align} 
\int_{\eta_z}^{\rho} \hat{f}(u,\vecz)du  & \leq \lim_{u\rightarrow \eta_z^+}
\underline{f}(u,\vecz)-M  \label{mustnn} \\
& = (\rho-\eta_z)
\lambda(\rho,\vecz,M) \leq (\rho-\eta_z)U\ .\nonumber
\end{align}
From \eqref{lvio},
for some $\epsilon>0$, $\hat{f}$ has average value strictly higher than $U$
on $S(u,\vecv)$ for all $u$ in $(\eta,\rho]$ for $\eta\in
[\rho-\epsilon,\rho)$.  For each $\vecz\in S^*(\rho,\vecv)$ we
define $\zeta_z$ as the maximum of $\rho-\epsilon$ and $\inf\{u \mid S^*(u,\vecv)=S^*(u,\vecz)\}$.
From \eqref{openset:lemma}, 
$\zeta_z< \rho$.  For each $\vecz$, the higher estimate values
on $S(u,\vecz)$ for $u\in (\zeta_z,\rho]$
must be
``compensated for'' by lower values on $u\in (\eta_z,\zeta_z)$
(from nonnegativity we must have the strict inequality $\eta_z < \zeta_z$)
so that \eqref{mustnn} holds.  By modifying the estimator to be equal to
$U$ for all outcomes $S(u,\vecv)$ $u\in (\rho-\epsilon,\rho]$ and
correspondingly increasing some estimate  values that are lower than
$U$ to $U$ on $S(u,\vecz)$ for $u\in (\eta_z,\zeta_z)$ we obtain an estimator
with strictly lower variance than $\hat{f}$ for all $\vecz\in Z \cap
S^*(\rho,\vecv)$ and same variance as $\hat{f}$ on all other vectors.
Note we can perform the shift consistently across all   branches of
the tree-like partial order on outcomes.

Violation \eqref{svio} means that
for some $\epsilon>0$, $\hat{f}$ has average value strictly lower than $L$
on $S(u,\vecv)$ for all intervals $u\in (\eta,\rho]$ for $\eta\in
[\rho-\epsilon,\rho)$.
For all $\vecz$, the $\vecz$-optimal extension
$\hat{f}^{(\vecz,\rho,M)}(u)$  has value $\lambda(\rho,\vecz,M)\geq L$
at $\rho$ and (from convexity of lower hull)  values that are at least that on $u<\rho$.
From unbiasedness,  we must have for all $\vecz\in Z\cap
S^*(\rho,\vecv)$, $\int_0^\rho \hat{f}(u,\vecz)du=\int_0^\rho
\hat{f}^{(\vecz,\rho,M)}(u) du$.  Therefore, values lower than $L$
must be compensated for in $\hat{f}$ by values higher than $L$.
We can modify the estimator such that it is equal to $L$ for $S(u,\vecv)$ for $u\in (\rho-\epsilon,\rho)$ and compensate for that by
lowering values at lower $u$ values $u<\zeta_z$ that are higher than $L$.
The modified estimator
has strictly lower variance than $\hat{f}$ for all $\vecz\in Z \cap
S^*(\rho,\vecv)$ and same variance as $\hat{f}$ on all other vectors.
\ignore{
This means that we can lower variance for
all $\vecz$ by properly shifting "weight'' and setting the estimate
value to be equal to $\inf_{\vecz\in
  Z\cap S^*(\rho,\vecv)} \lambda(\rho,\vecz)$ on outcomes
$S(u,\vecv)$ $u\in [\rho-\epsilon,\rho]$.
For all $\vecz$, we must have that $\int_0^{rho-\epsilon}
[\hat{f}(u,\vecz-L]^+\geq \epsilon L-\int_{rho-\epsilon}^\rho
\hat{f}(u,\veczdu$.  Thus, we can balance the increase by decreasing estimate values that are
higher than $L$ on lower $u$ and obtain a modified estimator with
lower variance for all $\vecz\in Z\cap S^*(\rho,\vecv)$ (and no
effect on other vectors.

  The point $\rho$ must be a gap point.
 In particular, for all $\vecz$ looking at the optimal solution for
 $\vecz$ when fixing the estimator on $u> \rho$,
strictly lower estimate values must be assumed later within the same gap
interval.  Look at the supremum of $\hat{f}^{(\vecz)}$ on $\rho$,
look at corresponding left end points of gap interval.   The value
$\hat{f}^{(\vecv)}$ upper bounds the expectation within the gap
interval and thus some
positive measure set of lower points within gap interval must assume
strictly lower values to balance
the excess.  This must happen in all branches, since it happens to the
supremum.   We shift weight to these lower points to cover all
branches to obtain more balanced estimates, and strictly lower
variance for all $\vecv$.

If violation is  \eqref{svio}, that is
then we are below the $\int_x^1 f^{(\vecz)}(u,\vecz) $ for all consistent data at $S(x,\vecv)$.  We can shift weight back and
  decrease variance for all as in the proof of Theorem \ref{precoptcompletion}.
}
\end{proof}

\ignore{
\begin{theorem} \label{domshared:thm}
An admissible estimator must be
equivalent to a solution of \eqref{necessarydomshared}.
\end{theorem}
\begin{proof} (sketch)
Consider
an estimator that violates \eqref{altnds}.
 Let $\vecv$ be such that \eqref{altnds} is violated for some
 outcomes consistent with $\vecv$ and
let $\rho$ be a supremum point such that there is an interval
$(\rho-\epsilon,\rho)$ with one sided violation.
That is,
\begin{equation}\label{svio}
\forall x\in (\rho-\epsilon,\rho), \int_{x}^1 \hat{f}(u,\vecv)du  <  \int_{x}^1 \hat{f}^{(L)}(u,\vecv)du
\end{equation}
or
\begin{equation}\label{lvio}
\forall x\in (\rho-\epsilon,\rho), \int_{x}^1 \hat{f}(u,\vecv)du  <  \int_{x}^1 \hat{f}^{(L)}(u,\vecv)du
\end{equation}

If violation is \eqref{lvio}, then fixing the solution on $(\rho,1]$ and
considering the variance optimal solutions $\hat{f}^{(\vecz)}$ for
$\vecz\in S(\rho,\vecv)$, the point $\rho$ must be in to a gap
interval for all $\vecz$.
This means that $\hat{f}$ on $(0,\rho]$  violates monotonicity for all
$\vecz\in S(\rho,\vecv)$.  In particular, for all $\vecz$
strictly lower estimate values must be assumed later within the same gap
interval.  Look at the supremum of $\hat{f}^{(\vecv)}$ on $\rho$,
look at corresponding left end points of gap interval.   The value
$\hat{f}^{(\vecv)}$ upper bounds the expectation within the gap
interval and thus some
positive measure set of lower points within gap interval must assume
strictly lower values to balance
the excess.  This must happen in all branches, since it happens to the
supremum.   We shift weight to these lower points to cover all
branches to obtain more balanced estimates, and strictly lower variance for all $\vecv$.

If violation is  \eqref{svio}, that is
then we are below the $\int_x^1 f^{(\vecz)}(u,\vecz) $ for all consistent data at $S(x,\vecv)$.  We can shift weight back and
  decrease variance for all as in the proof of Lemma \ref{precoptcompletion}.
Note we can perform the shift consistently across all   branches of
the tree-like partial order on outcomes.
\end{proof}
}

\section{The \L\ Estimator} \label{Lest:sec}

The \L\ estimator, 
$\hat{f}^{(L)}$, is the solution of \eqref{b1:ineq} with equalities, obtaining
values that are minimum in the optimal range.  
Formally, it is the solution of the
integral equation $\forall \vecv\in {\bf V}$, $\forall \rho\in (0,1]$:
\begin{eqnarray}
\hat{f}^{(L)}(\rho,\vecv) 
& =& \frac{\underline{f}^{(\vecv)}(\rho)-\int_{\rho}^1 \hat{f}^{(L)}(u,\vecv) d u }{\rho} \label{LBdefeq}
\end{eqnarray}
Geometrically,  as visualized in Figure~\ref{Lsteps:fig},
the \L\ estimate on an outcome $S(\rho,\vecv)$  is exactly the slope
value that if maintained for outcomes $S(u,\vecv)$ ($u\in
(0,\rho]$), would yield an expected estimate of $\underline{f}(S)$. 
\begin{figure}[htbp]
\centerline{
\ifpdf
\includegraphics[width=0.44\textwidth]{Lsteps_nolatex}
\else
 \epsfig{figure=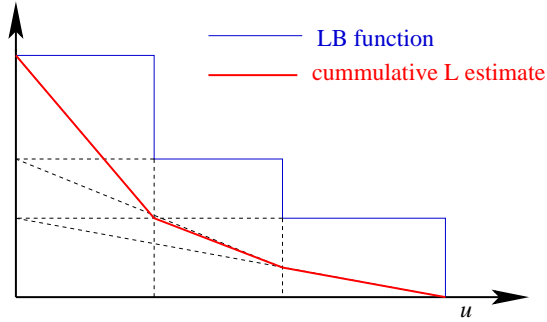,width=0.44\textwidth}
\fi
}
\caption{An example lower bound function $\underline{f}^{(\vecv)}(u)$ with 3 steps and the respective cummulative L estimate
  $\int_u^1 \hat{f}^{(L)}(u,\vecv)du$.  The estimate $\hat{f}^{(L)}$
  is the   negated slope and in this case is also a step function with 3 steps. \label{Lsteps:fig}}
\end{figure}
We derive a
convenient
expression for the \L\ estimator, which enables us to derive explicit
forms or compute it for any function $f$.
We show that the \L\ estimator
is $4$-competitive and that it is the unique admissible monotone
estimator.
We also show it is order-optimal with respect to
the natural order that prioritizes data vectors with lower $f(\vecv)$.

Fixing $\vecv$, \eqref{LBdefeq} is a first-order differential equation for $F(\rho)
\equiv \int_\rho^1 \hat{f}^{(L)}(u,\vecv) d u $ and the initial
condition $F(1)=0$.  Since the lower
bound function $\underline{f}^{(\vecv)}$ is monotonic and bounded, it
is continuous (and differentiable) almost everywhere.  Therefore, the
equation with the initial condition has a unique solution:

\ignore{
\medskip
\noindent
{\bf Estimator $\hat{f}^{(L)}$ when $\underline{f}(u)$ is a step function:}\\

 We derive the estimator $\hat{f}^{(L)}$ when for each $\vecv$, $\underline{f}(u)$
is a step function with
a fixed value $L_i$ in the interval $[y_i, y_{i+1})$, $0\leq i\leq n$,
$0=y_0<y_1<\cdots<y_{n+1}=1$ such that
$L_0\equiv f(\vecv)$ and $L_{n}\equiv 0$.
(These two conditions are necessary: we can have a positive estimate only if $S\not=\emptyset$ which only holds for some $u<\alpha$ for some $\alpha < 1$.)
If $L_0< f(\vecv)$ it is not possible to obtain an unbiased estimator
since (\ref{nec_req}) is violated.

If $u\in [y_j, y_{j+1})$ we output the estimate
\begin{equation}  \label{LBaw:eq}
\hat{f}^{(L)}=\sum_{h=j}^{n-1} \frac{L_{h}-L_{h+1}}{y_{h+1}}\ .
\end{equation}

 \begin{lemma}
 This estimate $\hat{f}^{(L)}$ is unbiased.
 \end{lemma}
\begin{proof}
The expectation is
$$\int_{u=0^{+}}^{1} \hat{f}^{(L)} du= \sum_{j=0}^{n-1} \int_{y_j}^{y_{j+1}} \hat{f}^{(L)} du\ .$$
We have
$$\int_{y_j}^{y_{j+1}} \hat{f}^{(L)} du = (y_{j+1}-y_j)\sum_{h=j}^{n-1}\frac{L_{h}-L_{h+1}}{y_{h+1}}$$
Hence,
\begin{eqnarray*}
\lefteqn{\int_{u=0^{+}}^{1} \hat{f}^{(L)} du}\\
 & = & \sum_{j=0}^{n-1} (y_{j+1}-y_j)
  \sum_{h=j}^{n-1}\frac{L_{h}-L_{h+1}}{y_{h+1}}\\
&=&\sum_{j=0}^{n-1} \frac{L_{j}-L_{j+1}}{y_{j+1}}\sum_{h=0}^{j}(y_{h+1}-y_h) \\
&=& \sum_{j=0}^{n-1} \frac{L_{j}-L_{j+1}}{y_{j+1}} (y_{j+1} -y_0)\\
&=& \sum_{j=0}^{n-1} ({L_{j}-L_{j+1}})=  L_0-L_n=f(\vecv)
\end{eqnarray*}
\end{proof}

\medskip
\noindent
{\bf Estimator $\hat{f}^{(L)}$ for a general $\underline{f}(u)$:}
} 



\begin{lemma}  \label{Lestimator}
\begin{align}
\hat{f}^{(L)}(\rho,\vecv) &= \frac{\underline{f}^{(\vecv)}(\rho)}{\rho}- \int_{\rho}^1
\frac{\underline{f}^{(\vecv)}(u)}{u^2} du \label{LBgen} \\
\end{align}
When $\underline{f}^{(\vecv)}(1)=0$, which we can assume without loss
of generality\footnote{Otherwise, we can instead estimate
the function  $f(\vecv)-\underline{f}^{(\vecv)}(1)$, which satisfies
this assumption,  and then add a fixed value of
  $\underline{f}^{(\vecv)}(1)$ to the resulting estimate.}, the solution has the simpler form:
\begin{align}
\hat{f}^{(L)}(\rho,\vecv) &= -\int_\rho^1 \frac{\frac{d \underline{f}^{(\vecv)}(u)}{d u}}{u}du \label{shortLB}
\end{align}
\end{lemma}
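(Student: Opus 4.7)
The plan is to recognize the defining integral equation \eqref{LBdefeq} as a first-order linear ODE in disguise, solve it by the integrating-factor $1/\rho$, and then derive \eqref{shortLB} from \eqref{LBgen} by integration by parts.

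First I would fix $\vecv$ and set $F(\rho) = \int_\rho^1 \hat{f}^{(L)}(u,\vecv)\,du$, so that $F(1)=0$ and $F'(\rho) = -\hat{f}^{(L)}(\rho,\vecv)$ almost everywhere. Rewriting \eqref{LBdefeq} as $\rho\, \hat{f}^{(L)}(\rho,\vecv) = \underline{f}^{(\vecv)}(\rho) - F(\rho)$ and substituting yields the linear ODE
\begin{equation*}
\rho F'(\rho) - F(\rho) = -\underline{f}^{(\vecv)}(\rho).
\end{equation*}
Dividing by $\rho^{2}$ makes the left-hand side the derivative of $F(\rho)/\rho$:
\begin{equation*}
\frac{d}{d\rho}\!\left(\frac{F(\rho)}{\rho}\right) \;=\; -\frac{\underline{f}^{(\vecv)}(\rho)}{\rho^{2}}.
\end{equation*}
Integrating from $\rho$ to $1$ and using $F(1)=0$ gives $F(\rho) = \rho \int_\rho^1 \underline{f}^{(\vecv)}(u)/u^{2}\,du$. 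Plugging this back into $\hat{f}^{(L)}(\rho,\vecv) = (\underline{f}^{(\vecv)}(\rho) - F(\rho))/\rho$ yields \eqref{LBgen}. Uniqueness follows from the initial value $F(1)=0$ together with the ODE being linear.

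For \eqref{shortLB}, assuming $\underline{f}^{(\vecv)}(1)=0$, I would integrate the remaining term in \eqref{LBgen} by parts, using $dh = du/u^2$ and $h = -1/u$, to obtain
\begin{equation*}
\int_\rho^1 \frac{\underline{f}^{(\vecv)}(u)}{u^{2}}\,du \;=\; \frac{\underline{f}^{(\vecv)}(\rho)}{\rho} - \underline{f}^{(\vecv)}(1) + \int_\rho^1 \frac{1}{u}\,\frac{d\underline{f}^{(\vecv)}(u)}{du}\,du.
\end{equation*}
Substituting back into \eqref{LBgen} cancels the $\underline{f}^{(\vecv)}(\rho)/\rho$ terms and leaves exactly \eqref{shortLB}.

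The main technical point to watch is regularity. By \eqref{proplb}, $\underline{f}^{(\vecv)}$ is monotone non-increasing and (on a bounded interval) therefore of bounded variation, so it is differentiable almost everywhere and its derivative is integrable; this makes the manipulations above valid a.e.\ and justifies treating the integration by parts via the Lebesgue--Stieltjes framework when $\underline{f}^{(\vecv)}$ has jumps. The ``without loss of generality'' footnote reduces to the boundary-free case by subtracting a constant, and since the construction of a $\prec^{+}$-optimal estimator is defined up to equivalence in the sense of \eqref{lebesguediff}, the a.e.\ agreement is all that is required.
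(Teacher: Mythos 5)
Your proof is correct and follows essentially the same route as the paper: the paper likewise treats \eqref{LBdefeq} as a first-order ODE for $F(\rho)=\int_\rho^1\hat{f}^{(L)}(u,\vecv)\,du$ with $F(1)=0$ (it verifies that \eqref{LBgen} satisfies the equation rather than deriving it via the integrating factor $1/\rho^2$, which is the same computation run in the opposite direction) and obtains \eqref{shortLB} from \eqref{LBgen} by the same integration by parts, relying on a.e.\ differentiability of the monotone function $\underline{f}^{(\vecv)}$. Your added remark that the derivative in \eqref{shortLB} should be read in the Lebesgue--Stieltjes sense when $\underline{f}^{(\vecv)}$ has jumps is a sound refinement of a point the paper treats only implicitly.
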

\ignore{
\begin{proof}
  We show that (\ref{LBgen}) is a solution of
(\ref{LBdefeq}).
 To ease calculations, we use $h(x)\equiv \int
 \frac{\underline{f}^{(\vecv)}(x)}{x^2} dx$ and show that if
 $\hat{f}^{(L)}(\rho,\vecv)$ satisfies
\begin{equation} \label{awithh}
\hat{f}^{(L)}(\rho,\vecv)=\frac{\underline{f}^{(\vecv)}(\rho)}{\rho}-h(1)+h(\rho)\ .
\end{equation}
Then it also satisfies
\begin{equation*}
\underline{f}^{(\vecv)}(\rho) = \rho \hat{f}^{(L)}(\rho,\vecv) +
\int_\rho^1 \hat{f}^{(L)}(x,\vecv)dx\ .
\end{equation*}
Substituting (\ref{awithh}) we get
{\small
\begin{equation*}
\underline{f}^{(\vecv)} (\rho)=\underline{f}^{(\vecv)} (\rho)-\rho(h(1)-h(\rho))+\int_\rho^1 \bigg(\frac{\underline{f}^{(\vecv)} (x)}{x}-h(1)+h(x)\bigg)dx
\end{equation*}
\begin{equation*}
\rho(h(1)-h(\rho))=\int_\rho^1 (\frac{\underline{f}^{(\vecv)} (x)}{x}+h(x)) dx - (1-\rho) h(1)
\end{equation*}
}
and after rearranging and canceling identical terms
\begin{equation}  \label{intera}
h(1)-\rho h(\rho) = \int_\rho^1\left(\frac{\underline{f}^{(\vecv)}(x)}{x}-h(x)\right)dx
\end{equation}

Using $(h(x)x)'=xh'(x)+h(x)$, and substituting $xh'(x)=\underline{f}^{(\vecv)} (x)/x$
we obtain that $h(x)x=\int \bigg(\underline{f}^{(\vecv)}(x)/x + h(x)\bigg)dx$.
Therefore, 
\begin{equation*}
\int_\rho^1  \bigg(\underline{f}^{(\vecv)}(x)/x + h(x)\bigg)dx= h(x)x \vert^1_\rho = h(1)-\rho h(\rho)
\end{equation*}
and we established  \eqref{intera}.

 Lastly, the lower bound function $\underline{f}^{(\vecv)} (u)$ is monotone on
 $(0,1]$ and thus differentiable almost everywhere.  Thus,
 $\frac{\frac{ d \underline{f}^{(\vecv)}(u)}{d u}}{u}$
is defined almost everywhere.
We get \eqref{shortLB} from \eqref{LBgen} using
 integration by parts.
\end{proof}
} 


 We show a tight bound of $4$ for the competitive ratio for
$\hat{f}^{(L)}$, meaning that it is at most $4$ for all functions $f$ and
for any $\epsilon>0$, 
there exists a function $f$ on which the ratio is no less than
$4-\epsilon$.
\begin{theorem}  \label{Ltight4:thm}
\begin{align*}
& \sup_{f,\vecv\, |  \int_0^1 \hat{f}^{(\vecv)}(u)^2du< \infty}
\frac{\int_0^1 \hat{f}^{(L)}(u,\vecv)^2du}{\int_0^1 \hat{f}^{(\vecv)}(u)^2du} = 4 \ ,
\end{align*}
\end{theorem}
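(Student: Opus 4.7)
The plan is to establish the supremum equals $4$ via an upper bound argument combining the dual form of Hardy's inequality with a stochastic-dominance reduction, matched by an extremal construction for tightness.

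For the upper bound, let $g = \underline{f}^{(\vecv)}$ and let $G = H^{(\vecv)}_f$ denote its lower hull, with $H = -dG/du \ge 0$, so the $\vecv$-optimal second moment equals $\int_0^1 H(u)^2\, du$. Assume WLOG $g(1) = G(1) = 0$. Using \eqref{shortLB}, write the \L\ estimator as $\hat{f}^{(L)}(\rho,\vecv) = \int_\rho^1 d\mu_g(u)/u$, where $d\mu_g := -dg$ is a nonnegative Borel measure on $(0,1]$ with total mass $f(\vecv)$. By Fubini,
\begin{equation*}
\int_0^1 \hat{f}^{(L)}(\rho,\vecv)^2\, d\rho = \iint_{(0,1]^2} \frac{d\mu_g(u)\, d\mu_g(v)}{\max(u,v)},
\end{equation*}
and the analogous identity for $d\mu_G := H(u)\,du$ gives $\iint \frac{d\mu_G\,d\mu_G}{\max(u,v)} = \int_0^1 \bigl(\int_\rho^1 H(u)/u\, du\bigr)^2 d\rho$. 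Since $g \ge G$ pointwise and the total masses agree, $\mu_g((u,1]) = g(u) \ge G(u) = \mu_G((u,1])$ for all $u$, so $\mu_g$ first-order stochastically dominates $\mu_G$. For iid samples $(U_g,V_g)$ from $\mu_g/f(\vecv)$ and $(U_G,V_G)$ from $\mu_G/f(\vecv)$ this yields $\max(U_g,V_g) \ge_{\mathrm{st}} \max(U_G,V_G)$, and since $t \mapsto 1/t$ is decreasing, $E[1/\max(U_g,V_g)] \le E[1/\max(U_G,V_G)]$, i.e.,
\begin{equation*}
\iint \frac{d\mu_g\, d\mu_g}{\max(u,v)} \le \iint \frac{d\mu_G\, d\mu_G}{\max(u,v)}.
\end{equation*}
Extending $H$ by zero to $(0,\infty)$, the dual Hardy inequality $\int_0^\infty \bigl(\int_x^\infty H(u)/u\, du\bigr)^2 dx \le 4\int_0^\infty H(u)^2\, du$ then delivers $\int_0^1 (\hat{f}^{(L)})^2\, d\rho \le 4\int_0^1 H^2\, du = 4\int_0^1 (\hat{f}^{(\vecv)})^2\, du$.

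For tightness, I would construct a family saturating the Hardy extremizer. For small $\epsilon > 0$, take ${\bf V} = [0,1]$, $f(v) = v$, and the monotone sampling scheme $S^*(v,u) = [\max(0, v - u^{1/2+\epsilon}), v]$ (this is a valid monotone scheme, as the set is nondecreasing in $u$ and contains $v$). For $\vecv = 1$ one obtains $\underline{f}^{(\vecv)}(u) = 1 - u^{1/2+\epsilon}$, which is convex on $(0,1]$, so $g = G$ and $H(u) = (1/2+\epsilon) u^{-1/2+\epsilon}$, giving $\int_0^1 H^2\, du = (1/2+\epsilon)^2/(2\epsilon)$. Direct computation yields $\hat{f}^{(L)}(\rho,\vecv) = \frac{1/2+\epsilon}{1/2-\epsilon}\bigl(\rho^{-1/2+\epsilon} - 1\bigr)$, whose squared integral equals $\frac{(1/2+\epsilon)^2}{(1/2-\epsilon)^2}\bigl[\frac{1}{2\epsilon} - \frac{2}{1/2+\epsilon} + 1\bigr]$. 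Dividing and letting $\epsilon \to 0^+$, the leading $1/(2\epsilon)$ terms dominate and the ratio tends to $1/(1/2)^2 = 4$, matching the upper bound.

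The main subtlety lies in the stochastic-dominance step: one must verify that $\mu_g \ge_{\mathrm{st}} \mu_G$, which follows from $\mu_g((u,1]) \ge \mu_G((u,1])$ together with equal total masses, and then that this ordering passes correctly through the decreasing functional $t \mapsto 1/t$ of $\max(U,V)$ under an iid coupling. Once this reduction to the convexified measure $\mu_G$ is carried out, the upper bound is an immediate application of the classical Hardy inequality in dual form, and the tightness is established by the explicit extremizer family above.
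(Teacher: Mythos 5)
Your upper-bound half is correct, and it is in substance a sharpened version of the paper's own proof of Lemma~\ref{Lestbound}: where the paper merely \emph{asserts} that, with the lower hull fixed, the second moment of $\hat{f}^{(L)}$ is maximized when $\underline{f}^{(\vecv)}$ coincides with its hull, your Fubini identity with the $1/\max(u,v)$ kernel plus the stochastic-dominance comparison of $\mu_g$ and $\mu_G$ (equal total mass because the hull has the same limit $f(\vecv)$ at $0^+$) actually proves that reduction; and the convex-case inequality you import as the dual Hardy inequality with constant $4$ is exactly inequality \eqref{toshow}, which the paper proves directly by the Cauchy--Schwarz computation. So on that side you are on the same track as the paper, with a cleaner justification of its one unproved step.

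The genuine gap is in your tightness construction. The map $S^*(v,u)=[\max(0,v-u^{1/2+\epsilon}),v]$ is indeed monotone in $u$ and contains $v$, but it is not a monotone sampling scheme in the paper's sense: $S^*(\vecv,u)$ has to be the set of all data consistent with an actual sample $S(\vecv,u)$, so for each fixed $u$ the sets must form the information partition induced by $S(\cdot,u)$ --- if $\vecz\in S^*(\vecv,u)$ then $S^*(\vecz,u)=S^*(\vecv,u)$, equivalently any two outcome sets are nested or disjoint, a property the preliminaries rely on throughout. Your intervals violate this: with $a=u^{1/2+\epsilon}$ and $z=1-a/2\in S^*(1,u)=[1-a,1]$ one gets $S^*(z,u)=[\max(0,1-3a/2),1-a/2]\neq S^*(1,u)$, so no sample map induces these sets, and neither ``unbiasedness for every $\vecz\in S^*$'' nor the benchmark $\hat{f}^{(\vecv)}$ is defined for this instance; hence it cannot witness the supremum. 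The repair is exactly the paper's device: keep genuine PPS sampling on ${\bf V}=[0,1]$ with $\tau(u)=u$, make $f$ \emph{decreasing}, $f(v)=\frac{1-v^{1-\p}}{1-\p}$ with $\p=\frac12-\epsilon$, and evaluate at the data point $v=0$, whose lower bound function is $\frac{1-u^{1/2+\epsilon}}{1/2+\epsilon}$ --- your profile up to a constant factor, which cancels in the ratio. Your numerical computation (ratio $\to 4$ as $\epsilon\to 0^+$, matching the paper's $\frac{2}{1-\p}$) is correct and carries over verbatim once the family is realized inside a legitimate scheme.
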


 We present a family of functions for which the supermum of this
 ratio is $4$.  We use the domain
${\bf V}=[0,1]$,  a PPS sampling scheme with $\tau(u)=u$, and the
function
$f(v)=\frac{1}{1-\p}-\frac{v^{1-\p}}{1-\p}$
for $\p \in [0,0.5)$. For the data $v=0$ we have the
following convex lower bound function
$$\underline{f}(u,0)=\frac{1}{1-\p}-\frac{u^{1-\p}}{1-\p} \ .$$

Being convex, this lower bound function is equal to its lower hull.
Therefore, by taking its negated derivative, we get
 $\hat{f}^{(0)}(u)=1/u^{\p}$.
  The function $\hat{f}^{(0)}$ is
square integrable when $\p< 0.5$:
\begin{align*}
\int_0^1 \hat{f}^{(0)}(u)^2 du &=\int_0^1 1/u^{2\p}du=
\frac{1}{1-2\p}\ .
\end{align*}
From \eqref{shortLB},  the \L\ estimator on outcomes consistent with
$v=0$ for $\p\in (0,0.5)$ is\footnote{ For $\p=0$ the estimate
  is $-\ln(x)$.}
\begin{align*}
\hat{f}^{(L)}(x,0)&=\int_x^1 \frac{1}{u^{1+\p}}
=\frac{1}{\p}\bigg(\frac{1}{x^{\p}}-1\bigg)\ .
\end{align*}
Hence,
\begin{eqnarray*}
\lefteqn{\int_0^1 \hat{f}^{(L)}(u,0)^2 du =\frac{1}{\p^2}\int_0^1
\bigg(\frac{1}{u^{2\p}}-\frac{2}{u^{\p}}+1\bigg) du} \\ 
&=&
\frac{1}{\p^2}\bigg(\frac{1}{1-2\p}-\frac{2}{1-\p}+1\bigg)
=
\frac{2}{(1-2\p)(1-\p)}\ .
\end{eqnarray*}
We obtain  the ratio
$$\frac{\int_0^1 \hat{f}^{(L)}(u,0)^2 du}{\int_0^1 \hat{f}^{(0)}(u)^2 du}=\frac{2}{1-\p} \leq 4 \ .$$
The ratio approaches $4$ when $\p\rightarrow 0.5^{-}$.




We conclude the proof of Theorem \ref{Ltight4:thm} using 
the following lemma that shows that if  $\hat{f}^{(\vecv)}(u)$ is square integrable, that is, \eqref{bounded_var_shared_nec_req} holds, then $\hat{f}^{(L)}(u,\vecv)$ is also square integrable and the ratio between these integrals is at most $4$.
\begin{lemma} \label{Lestbound}
\begin{align*}
& \forall \vecv,\ \int_0^1 \hat{f}^{(\vecv)}(u)^2du < \infty \implies
\frac{\int_0^1 \hat{f}^{(L)}(u,\vecv)^2du}{\int_0^1
  \hat{f}^{(\vecv)}(u)^2du}\leq 4\ .
\end{align*}
\end{lemma}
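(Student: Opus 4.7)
The plan is a Hardy-type integration-by-parts argument, refined by a comparison between $\underline{f}^{(\vecv)}$ and its lower hull $H := H^{(\vecv)}_f$. Assume WLOG that $\underline{f}^{(\vecv)}(1) = 0$ per the footnote in the lemma. By \eqref{shortLB}, $F(\rho) := \hat{f}^{(L)}(\rho,\vecv) = \int_\rho^1 \mu(u)/u\, du$ with $\mu := -d\underline{f}^{(\vecv)}/du \ge 0$; set $\psi := \hat{f}^{(\vecv)} = -dH/du$. Then $F' = -\mu/\rho \le 0$ and $F(1) = 0$. Integration by parts on $[\epsilon, 1]$ with $u = F^2$, $dv = d\rho$ yields
\[
\int_\epsilon^1 F(\rho)^2\, d\rho = -\epsilon F(\epsilon)^2 + 2\int_\epsilon^1 F(\rho)\mu(\rho)\, d\rho.
\]
Applying Cauchy-Schwarz here would only give $\int F^2 \le 4 \int \mu^2$, which is too weak: since $\psi$ equals $\mu$ where the hull touches $\underline{f}^{(\vecv)}$ and equals the average of $\mu$ over each maximal interval where $H$ is linear but $H < \underline{f}^{(\vecv)}$, Cauchy-Schwarz on each such interval yields $\int \psi^2 \le \int \mu^2$ with strict inequality generically.

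The refinement is to replace $\mu$ by $\psi$ at the price of a boundary term. Set $\Delta := \underline{f}^{(\vecv)} - H \ge 0$; then $\Delta' = \psi - \mu$, $\Delta(1) = 0$, and $\Delta(0^+) = 0$ (the lower hull touches $\underline{f}^{(\vecv)}$ at $0^+$ by maximality and \eqref{nec_req}). Integration by parts gives
\[
\int_\epsilon^1 F(\mu - \psi)\, d\rho = F(\epsilon)\Delta(\epsilon) - \int_\epsilon^1 \frac{\mu(\rho)\Delta(\rho)}{\rho}\, d\rho,
\]
and dropping the non-positive last term, the combined boundary contribution $-\epsilon F(\epsilon)^2 + 2F(\epsilon)\Delta(\epsilon)$ is bounded above by $\Delta(\epsilon)^2/\epsilon$ via completing the square. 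Combined with Cauchy-Schwarz on the remaining $\int F\psi$ term, this yields a quadratic inequality in $X_\epsilon := \sqrt{\int_\epsilon^1 F^2\, d\rho}$, namely $X_\epsilon^2 \le \Delta(\epsilon)^2/\epsilon + 2 X_\epsilon \Psi_\epsilon$ with $\Psi_\epsilon := \sqrt{\int_\epsilon^1 \psi^2\, d\rho}$, which solves to $X_\epsilon \le \Psi_\epsilon + \sqrt{\Psi_\epsilon^2 + \Delta(\epsilon)^2/\epsilon}$.

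The hard part is controlling $\Delta(\epsilon)^2/\epsilon$ as $\epsilon \to 0^+$. The key observation is that since $\underline{f}^{(\vecv)}$ is nonincreasing and $\underline{f}^{(\vecv)}(0^+) = H(0^+)$, we have $\Delta(\epsilon) \le H(0^+) - H(\epsilon) = \int_0^\epsilon \psi(u)\, du$, so Cauchy-Schwarz on this integral gives $\Delta(\epsilon)^2/\epsilon \le \int_0^\epsilon \psi(u)^2\, du$, which tends to $0$ as $\epsilon \to 0^+$ under the finite-variance assumption \eqref{bounded_var_shared_nec_req}. Since $X_\epsilon$ is nondecreasing as $\epsilon$ decreases, letting $\epsilon \to 0^+$ in the quadratic bound yields $\sqrt{\int_0^1 F^2\, d\rho} \le 2\sqrt{\int_0^1 \psi^2\, d\rho}$, i.e.\ the claimed factor-$4$ bound $\int_0^1 \hat{f}^{(L)}(u,\vecv)^2\, du \le 4 \int_0^1 \hat{f}^{(\vecv)}(u)^2\, du$.
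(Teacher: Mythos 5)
Your proof is correct, and it takes a genuinely different route from the paper's. The paper first reduces to the case $\underline{f}^{(\vecv)}\equiv H^{(\vecv)}_f$, asserting (without proof) that among all lower bound functions sharing a given lower hull the second moment of $\hat{f}^{(L)}$ is maximized when the lower bound function coincides with the hull; after that reduction $\hat{f}^{(L)}(\rho)=\int_\rho^1 \hat{f}^{(\vecv)}(u)/u\,du$, and a single integration by parts followed by Cauchy--Schwarz on $\int \hat{f}^{(L)}\hat{f}^{(\vecv)}$ gives the constant $4$. You make no such reduction: you carry the gap $\Delta=\underline{f}^{(\vecv)}-H^{(\vecv)}_f$ through the argument, converting $\int_\epsilon^1 \hat{f}^{(L)}(\mu-\psi)\,d\rho$ into a boundary term plus a manifestly nonpositive integral, absorbing that boundary term together with $-\epsilon\,\hat{f}^{(L)}(\epsilon)^2$ by completing the square into $\Delta(\epsilon)^2/\epsilon$, and then showing this vanishes as $\epsilon\to 0^+$ via $\Delta(\epsilon)\le H^{(\vecv)}_f(0^+)-H^{(\vecv)}_f(\epsilon)=\int_0^\epsilon\hat{f}^{(\vecv)}(u)\,du$, Cauchy--Schwarz, and the finite-variance hypothesis \eqref{bounded_var_shared_nec_req}. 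The core Hardy-type inequality (integrate $(\hat{f}^{(L)})^2$ by parts, then Cauchy--Schwarz) is the same in both proofs, but your version is more self-contained because it sidesteps the paper's reduction claim, which is asserted rather than proved and is not entirely obvious; the cost is that you end with a quadratic, rather than linear, inequality in $\sqrt{\int_\epsilon^1 \hat{f}^{(L)}(u,\vecv)^2\,du}$, which you resolve correctly and which still yields the factor $4$.
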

\begin{proof}
Fixing $\vecv$, 
the function $\hat{f}^{(\vecv)}$ only depends on the lower hull
of the lower bound function $\underline{f}^{(\vecv)} (u)$.  The
estimator $\hat{f}^{(L)}$ depends on the  lower bound function
$\underline{f}$ and can be different for different lower bound
functions with the same lower hull.  Fixing the lower hull, the
variance of the \L\ estimator is maximized 
for $f$ such that $\underline{f}^{(\vecv)}\equiv H^{(\vecv)}_f$.
 It therefore suffices to
consider 
convex $\underline{f}^{(\vecv)}(u)$, that is, $\frac{d^2 \underline{f}^{(\vecv)}(u)}{d^2
u} > 0$ for which we have
$$\hat{f}^{(\vecv)}(u)=-\frac{d \underline{f}^{(\vecv)}(u)}{d u}\ .$$ 
Recall that $\hat{f}^{(\vecv)}(u)$ is monotone non-increasing.
From \eqref{shortLB},  $\hat{f}^{(L)}(\rho,\vecv) =
-\int_\rho^1 \frac{\frac{d \underline{f}^{(\vecv)}(u)}{d u}}{u}du$.

To establish our claim, it suffices to show that for all monotone, non
increasing, square integrable functions $g:(0,1]$,
\begin{equation} \label{toshow}
\frac{\int_0^1 (\int_x^1 \frac{g(u)}{u} du)^2 dx}{\int_0^1 g(x)^2
  dx}\leq 4
\end{equation}

Define $h(x)=\int_x^1 \frac{g(u)}{u} du$.
\begin{align*}
\int_{\epsilon}^{1} h^2(x) dx &=  \int_{\epsilon}^1 \int_x^1 2 h(y)
h'(y) dy dx  \\ &
= \int_{\epsilon}^1 \int_{\epsilon}^{y} 2 h(y) h'(y) dx dy  \\ &
= 2\int_{\epsilon}^1  h(y) h'(y) \int_{\epsilon}^{y} dx dy \\
&= 2\int_{\epsilon}^1  h(y) h'(y) (y-\epsilon) dy  \\ &
= 2\int_{\epsilon}^1  h(y) \frac{g(y)}{y} (y-\epsilon) dy \le
2\int_{\epsilon}^1  h(y) g(y)  dy \\
 &\le 2 \sqrt{\int_{\epsilon}^{1} h^2(y) dy} \sqrt{\int_{\epsilon}^{1}
g^2(y) dy}
\end{align*}

The last inequality is Cauchy-Schwartz.  To obtain \eqref{toshow}, we divide both
sides by $\sqrt{\int_{\epsilon}^{1} h^2(y) dy}$ and take the limit
as $\epsilon$ goes to $0$. 

\ignore{
We consider the Hilbert space $L^2[0,1]$ of of square integrable
functions on $[0,1]$.  The functions $\exp(i2\pi n x)$ for all integers
$n$ are an orthonormal basis of this space.
We can establish that for each basis element, $\exp(i2\pi
nu)$,
the function $\int_x^1 \frac{\exp(i2\pi nu)}{u} du$ is square
integrable, that is, the integral over $[0,1]$ of its product by its
complex conjugate is finite.  Moreover, it decreases with $|n|$.


{\bf *****  Check again:  the basis is countable not finite.  }
}
\ignore{
Using integration by parts
$\int \frac{g(u)}{u}du = g(u)\ln(u)-\int g'(u) \ln(u) du$.
Thus,
$\int_x^1 \frac{g(u)}{u}du = -g(x)\ln(x) -\int_x^1 g'(u) \ln(u)$.
Using, $0\leq -\int_x^1 g'(u) \ln(u) \leq -\ln(x)g(x)$
We obtain that $|\int_x^1 \frac{g(u)}{u}du|\leq -g(x)\ln(x)$.
If $g(x)$ is square integrable, so is $g(x)\ln(x)$ and we conclude.
}
\end{proof}

\begin{theorem}
The estimator $\hat{f}^{(L)}$ is monotone.
Moreover, it is the unique admissible  monotone estimator
and dominates all
monotone estimators.
\end{theorem}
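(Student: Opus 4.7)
The statement bundles three assertions: (a) $\hat{f}^{(L)}$ is monotone; (b) it dominates every monotone unbiased nonnegative estimator in variance; (c) it is the unique admissible monotone estimator. I would establish them in this order.

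For (a), the plan is to read monotonicity directly off formula \eqref{shortLB}: $\hat{f}^{(L)}(\rho,\vecv) = -\int_\rho^1 (d\underline{f}^{(\vecv)}(u)/du)/u\,du$. Since $\underline{f}^{(\vecv)}$ is non-increasing by \eqref{proplb}, the integrand is nonnegative, so shrinking the interval $(\rho,1]$ as $\rho$ grows decreases the integral. Hence $\hat{f}^{(L)}(\rho,\vecv)$ is non-increasing in $\rho$.

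The heart of the proof is (b). My plan is to first establish the following upper bound for any monotone unbiased nonnegative $\hat{g}$, any $\vecv$, and any $\rho\in(0,1]$:
\begin{equation}
\rho\,\hat{g}(\rho,\vecv) + G(\rho) \leq \underline{f}(\rho,\vecv), \qquad G(\rho)\equiv\int_\rho^1 \hat{g}(u,\vecv)\,du. \tag{$\star$}
\end{equation}
To obtain ($\star$), pick any $\vecz\in S^*(\rho,\vecv)$. Since $\vecz$ is consistent with $\vecv$'s outcome at seed $\rho$, the suffix property of the consistency sets gives $S(u,\vecz)=S(u,\vecv)$ for all $u\geq \rho$, so $\hat{g}(u,\vecz)=\hat{g}(u,\vecv)$ there, and in particular $\hat{g}(\rho,\vecz)=\hat{g}(\rho,\vecv)$. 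Monotonicity of $\hat{g}$ along the $\vecz$-trajectory then yields $\int_0^\rho \hat{g}(u,\vecz)\,du \geq \rho\,\hat{g}(\rho,\vecv)$, and unbiasedness for $\vecz$ gives $f(\vecz) \geq \rho\,\hat{g}(\rho,\vecv) + G(\rho)$. Taking the infimum over $\vecz\in S^*(\rho,\vecv)$ delivers ($\star$). Now subtract the defining equation $\rho\,\hat{f}^{(L)}(\rho,\vecv) + G^{(L)}(\rho) = \underline{f}(\rho,\vecv)$ from ($\star$) to get $\rho\,\phi(\rho) \leq \Phi(\rho)$ with $\phi=\hat{g}-\hat{f}^{(L)}$ and $\Phi(\rho)=\int_0^\rho\phi\,du = G^{(L)}(\rho)-G(\rho)$. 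Thus $(\Phi/\rho)' = (\rho\phi-\Phi)/\rho^2 \leq 0$, so $\Phi/\rho$ is non-increasing; since $\Phi(1)=0$ by unbiasedness, $\Phi\geq 0$ on $(0,1]$. Finally, integration by parts (the boundary term vanishes because $\Phi(0)=\Phi(1)=0$) gives
\[
\int_0^1 \hat{g}^2\,du - \int_0^1 (\hat{f}^{(L)})^2\,du
= -\int_0^1 \Phi(u)\bigl(\hat{g}'(u,\vecv)+(\hat{f}^{(L)})'(u,\vecv)\bigr)\,du \geq 0,
\]
since $\Phi\geq 0$ and both $\hat{g}$ and $\hat{f}^{(L)}$ are non-increasing. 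Because both estimators are unbiased, this $L^2$ comparison is the variance comparison.

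For (c), an admissible monotone $\hat{g}$ must by Theorem~\ref{rvarplus} satisfy the in-range lower bound $\rho\,\hat{g}(\rho,\vecv)+G(\rho)\geq \underline{f}(\rho,\vecv)$, which together with ($\star$) forces equality. But equality here is precisely the integral equation \eqref{LBdefeq} defining $\hat{f}^{(L)}$, whose solution is unique; hence $\hat{g}\equiv\hat{f}^{(L)}$. The main obstacle is seeing ($\star$): it is not obvious because one must exploit monotonicity not just along the $\vecv$-trajectory but simultaneously along every $\vecz\in S^*(\rho,\vecv)$, leveraging the key structural fact that estimator values coincide on shared outcomes and then taking an infimum. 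Once ($\star$) is in hand, the rest (the $(\Phi/\rho)$ monotonicity trick and the integration by parts) is routine.
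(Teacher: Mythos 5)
Your proof is correct, and it tracks the paper's strategy for parts~(a) and the key inequality of~(b), but it fills in a real gap: the paper's argument for dominance is only a brief mass-shifting sketch (``decrease the estimate for $u\le\rho-\epsilon$ and increase it for $u>\rho-\epsilon$''), whereas you give a clean, direct $L^2$ comparison. Your inequality $(\star)$ is exactly the paper's displayed inequality \eqref{monotonedef}; the paper asserts it ``by definition,'' while you spell out the needed reasoning (that monotonicity of $\hat g$ along each $\vecz$-trajectory plus unbiasedness for $\vecz$ gives $f(\vecz)\ge\rho\hat g(\rho,\vecv)+G(\rho)$, then infimum over $\vecz\in S^*$). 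From there, your $(\Phi/\rho)$-monotonicity trick and the integration by parts are new relative to the paper and make the dominance claim genuinely rigorous rather than informal. Your uniqueness argument~(c) also differs slightly in packaging: the paper exhibits a strictly dominating modification, while you combine the in-range lower bound (necessary for admissibility by Theorem~\ref{rvarplus}) with $(\star)$ to force equality in \eqref{LBdefeq} and then invoke uniqueness of its solution.

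One detail to flag in~(b): you dismiss the boundary term in the integration by parts with ``$\Phi(0)=\Phi(1)=0$.'' That disposes of $u=1$, but at $u\to 0^+$ the factor $\hat g+\hat f^{(L)}$ may be unbounded, so $\Phi(u)(\hat g(u)+\hat f^{(L)}(u))\to 0$ is not automatic just from $\Phi(0)=0$. It does hold whenever both estimators have finite variance --- for instance, by avoiding the boundary term entirely: write $h=\hat g+\hat f^{(L)}$, use $h(u)=h(1)+\int_u^1(-dh)$ with $-dh\ge 0$, note $\int_0^1\phi=0$ kills the $h(1)$ term, and apply Fubini (justified by Cauchy--Schwarz and finite second moments) to get $\int_0^1\phi h = \int_0^1\Phi(t)\,(-dh(t))\ge 0$. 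And if $\hat g$ has infinite variance on some $\vecv$, dominance there is vacuous. So the gap is minor and closable, but worth tightening.
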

\begin{proof}
Recall that an estimator $\hat{f}$ is monotone if and only if,
for any data $\vecv$, the
estimate $\hat{f}(\rho,\vecv)$ is non-increasing with $\rho$.
To show monotonicity of the \L\ estimator, we rewrite \eqref{LBgen} to obtain
\begin{equation}
\hat{f}^{(L)}(\rho,\vecv)=  \underline{f}^{(\vecv)}(\rho) + \int_{\rho}^1 \frac{\underline{f}^{(\vecv)}(\rho)-\underline{f}^{(\vecv)}(x)}{x^2} dx\ ,
\end{equation}
which is clearly non-increasing with $\rho$.

We now show that $\hat{f}^{(L)}$  dominates all monotone estimators
(and hence is the unique admissible monotone estimator).  By definition,
a monotone estimator $\hat{f}$ can not exceed
$\lambda_L$ on any outcome, that is, it must satisfy the inequalities $\forall \vecv,\ \forall \rho\in[0,1]$:
\begin{align} 
\rho \hat{f}(\rho,\vecv)+&\int_{\rho}^1 \hat{f}(u,\vecv) d u \leq 
\inf_{\vecz\in S^*(\rho,\vecv)} \int_0^1  \hat{f}(u,\vecz) d u =
\nonumber \\
& \inf_{\vecz\in S^*(\rho,\vecv)} f(\vecz) =
\underline{f}^{(\vecv)}(\rho)\ . \label{monotonedef}
\end{align}
Estimator $\hat{f}^{(L)}$ satisfies \eqref{monotonedef} with equalities.
If there is a monotone estimator $\hat{f}$ which is not equivalent to 
$\hat{f}^{(L)}$, that is, for some $\vecv$, the integral is strictly smaller than the
integral of $\hat{f}^{(L)}$ on some interval $(\rho-\epsilon,\rho)$
($\epsilon>0$ may depend on $\vecv$),
we can
obtain a monotone estimator that strictly dominates $\hat{f}$ by
decreasing the estimate for $u\leq \rho-\epsilon$ and increasing it for
$u>\rho-\epsilon$.  The variance decreases because we decrease the estimate on
higher values and increase on lower values. 
\end{proof}


\ignore{
In \eqref{LBgen} we expressed $\hat{f}^{(L)}$ in terms of
the lower bound function $\underline{f}(\rho,\vecv)$.
If $\underline{f}(\rho,\vecv)$ is replaced by
another monotone non increasing relaxation
$g(\rho,\vecv) \leq \underline{f}(\rho,\vecv)$,
the solution is still a well-defined
monotone estimator but may not be variance$^+$-optimal.
A reason for considering a relaxation is that in some cases
$\underline{f}$ may be harder to work with.
}

 Lastly, we show that  $\hat{f}^{(L)}$  is order-optimal
with respect to the order $\prec$ which prioritizes vectors with
lower $f(\vecv)$:
\begin{theorem}
A $\prec^+$-optimal estimator for $f$
 with respect to the partial order
$$\vecv \prec \vecv' \iff f(\vecv)< f(\vecv')\ $$
must be equivalent to $\hat{f}^{(L)}$.
\end{theorem}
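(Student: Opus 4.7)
My plan is to show that any $\prec^+$-optimal estimator $\hat{f}$ must coincide with $\hat{f}^{(L)}$ (up to equivalence) by an induction on the $f$-value of the data vectors. The base case handles vectors of global minimum $f$-value, and the inductive step propagates the agreement upward in $f$-value.

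For the base case, suppose $\vecw$ attains $f(\vecw) = v_0 = \inf_{\vecz \in {\bf V}} f(\vecz)$. Then no vector in the domain has strictly smaller $f$-value, so $\underline{f}^{(\vecw)}(u) \equiv v_0$ on all $u \in (0,1]$, and by \eqref{LBgen} the L estimator $\hat{f}^{(L)}(u,\vecw) \equiv v_0$ is constant, achieving zero variance. Any unbiased nonnegative $\hat{f}$ that disagrees with $\hat{f}^{(L)}$ on a positive-measure set of outcomes consistent with $\vecw$ must then have strictly positive variance on $\vecw$; since no vector precedes $\vecw$ in $\prec$, the estimator $\hat{f}^{(L)}$ strictly beats $\hat{f}$ in $\prec^+$ via $\vecw$, contradicting $\prec^+$-optimality. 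Hence $\hat{f} = \hat{f}^{(L)}$ a.e.\ on outcomes consistent with every global-minimum-$f$-value vector.

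For the inductive step, fix $\vecu$ with $f(\vecu) = v$ and assume $\hat{f} = \hat{f}^{(L)}$ a.e.\ on outcomes consistent with every vector of $f$-value strictly less than $v$. Let $\rho^\vecu = \sup\{u : \exists \vecz \in S^*(u,\vecu) \text{ with } f(\vecz) < v\}$; for $u > \rho^\vecu$ the inductive hypothesis (applied to $\vecz$) forces $\hat{f}(u,\vecu) = \hat{f}^{(L)}(u,\vecu)$. On the remaining interval $(0,\rho^\vecu]$ no lower-$f$-value vector is consistent with $\vecu$, so $\underline{f}^{(\vecu)}(u) \equiv v$ there, and a short computation with \eqref{LBgen} shows that $\hat{f}^{(L)}(\cdot,\vecu)$ is a constant $c^{(\vecu)}$ on $(0,\rho^\vecu]$. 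Any unbiased nonnegative extension on that interval must satisfy the fixed-integral constraint $\int_0^{\rho^\vecu} \hat{f}(u,\vecu)\,du = v - M^L$ where $M^L = \int_{\rho^\vecu}^1 \hat{f}^{(L)}(u,\vecu)\,du$; by the Cauchy--Schwarz inequality, $\int_0^{\rho^\vecu} \hat{f}(u,\vecu)^2 du \ge (v - M^L)^2/\rho^\vecu$, with equality iff $\hat{f}(\cdot,\vecu)$ is constant equal to $(v - M^L)/\rho^\vecu = c^{(\vecu)}$. Any deviation from this constant strictly increases variance on $\vecu$ while leaving every vector $\vecz \prec \vecu$ unaffected, so $\hat{f}^{(L)}$ beats $\hat{f}$ in $\prec^+$ via $\vecu$ --- contradiction. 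Hence $\hat{f} = \hat{f}^{(L)}$ a.e.\ on outcomes consistent with $\vecu$ as well.

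The main obstacle I expect is formalizing the induction in continuous domains where $f$-values form a continuum and there may be no ``next'' $f$-value to step to. I would address this by a supremum argument: let $v^* = \sup\{v : \hat{f} = \hat{f}^{(L)} \text{ a.e.\ on outcomes consistent with every vector of $f$-value} < v\}$. Using left-continuity of $\underline{f}$ (property \eqref{proplb}) and the open-set property \eqref{openset:lemma}, the agreement extends up to $v^*$, and if $v^* < \sup_{\vecv} f(\vecv)$ the inductive step applied at $v^*$ yields the desired contradiction. The sub-case in which $v^*$ is not attained as an $f$-value requires a limiting argument over a sequence $\vecu_n$ with $f(\vecu_n) \downarrow v^*$, using continuity of the L-values $c^{(\vecu_n)}$ and of the divergence thresholds $\rho^{\vecu_n}$ to pass the strict variance improvement to the limit.
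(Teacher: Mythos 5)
Your proof takes a genuinely different route from the paper's. The paper plugs the order $f(\vecv)<f(\vecv')$ into the order-optimality machinery of Section~\ref{estPREC:sec}: $\prec^+$-optimality forces the estimate at every outcome $S$ to equal $\lambda_\prec(S)$ (this is the combination of Lemma~\ref{varpprecp:lemma} with Theorem~\ref{rvarplus}, a \emph{local} perturbation argument at each outcome), and for this order one checks that ${\prec}\text{-}\lim(\underline{f}(\eta,\cdot),S^*)=\underline{f}(\rho,\vecv)$ for every $\eta<\rho$, so that $\lambda_\prec(S)=\lambda_L(S)$ --- exactly the defining equation \eqref{LBdefeq} of $\hat{f}^{(L)}$. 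You instead run a global induction on $f$-value. Your base case and inductive step, once $\rho^{\vecv}$ is taken to be the \emph{infimum} rather than the supremum of $\{u:\exists\vecz\in S^*(u,\vecv),\ f(\vecz)<f(\vecv)\}$ (which is an up-set), are correct and illuminating: $\underline{f}^{(\vecv)}\equiv f(\vecv)$ on $(0,\rho^{\vecv}]$ forces $\hat{f}^{(L)}$ to be constant there, and the Cauchy--Schwarz step shows any non-constant extension strictly loses on $\vecv$ while leaving all $\vecz\prec\vecv$ untouched; using $\hat{f}^{(L)}$ itself as the competing estimator (valid since \eqref{nec_req} makes it unbiased and nonnegative) is a clean witness to the $\prec^+$-optimality violation.

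The genuine gap is that your induction is not well-founded, and the supremum patch you sketch does not close it. Consider $f$-values forming $\{0\}\cup\{1/n:n\geq 1\}$, or any continuous range such as ${\bf V}=[0,1]$ under PPS as in the paper's tightness example for Theorem~\ref{Ltight4:thm}. After the base case, your set $C=\{v:\hat{f}=\hat{f}^{(L)}\text{ a.e.\ for all $f$-value}<v\}$ has $v^*=\sup C=0$, since agreement has been shown for no $1/n$; applying the inductive step ``at $v^*$'' merely re-establishes agreement at $f$-value $0$ and does not advance $v^*$. The obstruction is accumulation of $f$-values \emph{from above} $v^*$; the sub-case you flag (a sequence with $f$-value decreasing to $v^*$) covers only accumulation from below, i.e., $v^*$ not being attained, which is the wrong direction. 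For a vector with $f$-value $v>v^*$, the vectors $\vecz\in S^*(u,\cdot)$ with $f(\vecz)<v$ (for $u>\rho^{\vecv}$) can have $f$-values densely filling $(v^*,v)$, so the inductive hypothesis is never available. To repair the argument you would need either to restrict to domains whose $f$-image is well-ordered (which the theorem does not assume) or to abandon the global induction in favor of the paper's local in-range argument: Theorem~\ref{rvarplus} perturbs the estimator one outcome at a time and never inducts over the data domain.
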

\begin{proof}
We use our results of order-optimality (Section \ref{estPREC:sec}).
We can check that we obtain \eqref{LBdefeq} using
\eqref{shscpreccond:eq} and $\prec$ as defined in the statement of the
Theorem.   Thus, a $\prec^+$-optimal
solution must have this form.  
\end{proof}

The \L\ estimator may not be bounded (see Example \ref{example4}).
An estimator that is both bounded and competitive (but not necessarily
in-range, not monotone, and has a large compettive ratio) is the J
estimator \cite{CKsharedseed:2012}.  

\section{Order-optimality}  \label{estPREC:sec}

We identify conditions on $f$ and $\prec$ under which a $\prec^+$-optimal
estimator exists and specify this estimator as a solution of a set of equations.
Our derivations of
$\prec^+$-optimal estimators follow the intuition to require
the estimate on an outcome $S$ to be
$\vecv$-optimal with respect to the $\prec$-minimal vector that is
consistent with the outcome:
\begin{equation} \label{pplusoptd}
\forall S=S(\rho,\vecv),\ \hat{f}(S) = \lambda(\rho,\min_{\prec}(S^*)\ .
\end{equation}

When $\prec$ is a total order and
$V$ is finite, $\min_{\prec}(S^*)$ is unique and \eqref{pplusoptd}
is well defined.
 Moreover, as long as $f$ has a nonnegative unbiased estimator, a solution \eqref{pplusoptd}
always exists and is $\prec^+$-optimal.  We preview a simple construction of the solution:
 Process vectors in increasing $\prec$
order, iteratively building a partially
defined nonnegative estimator.  When processing $\vecv$, the
estimator is already defined for $S(u,\vecv)$ for $u\geq \rho_v$,
for some $\rho_v\in (0,1]$.  We extend it to the outcomes $S(u,\vecv)$
for $u\leq \rho_v$ using the $\vecv$-optimal
extension $\hat{f}^{(\vecv,\rho_v,M)}(u)$, where $M=\int_{\rho_v}^1
\hat{f}(u,\vecv)du$  (see Theorem~\ref{voptlh}).

We now formulate conditions that will allow us to establish
$\prec^+$-optimality  of a solution of \eqref{pplusoptd}  in more
general settings.
These conditions always hold when $\prec$ is a total order and
$V$ is finite.
Generally, $$\min_{\prec}(S^*)=\{\vecz\in S^* \vert \neg\exists \vecw\in S^*,\ \vecw\prec \vecz \}$$ is a {\em set}  and
\eqref{pplusoptd} is well defined  when
$\forall S$, this set is not empty and $\lambda(\rho,\min_{\prec}(S^*))$ is unique,
that is, the value $\lambda(\rho,\vecz)$ is the same for all $\prec$-minimal vectors $\vecz\in
\min_{\prec}(S^*)$.  A sufficient condition for this is that
\begin{eqnarray}
\lefteqn{  \forall \rho\ \forall \vecv\ \forall x\in
  (0,\underline{f}(\rho,\vecv)] \ \forall \vecz,\vecw\in
 \min_{\prec}(S^*(\rho,\vecv)),}\nonumber\\
&& \inf_{\eta<\rho} \frac{\underline{f}(\eta,\vecz) - x}{\rho-\eta}=
\inf_{\eta<\rho} \frac{\underline{f}(\eta,\vecw) - x}{\rho-\eta}\label{unilambdacond}
 \end{eqnarray}

  In this case, the respective Equation
\eqref{pplusoptd} on $u\in (0,\rho]$ are the same for all $\vecz\in
\min_{\prec}(S^*)$ and thus so are the estimate values
$\hat{f}(u,\vecz)$.


We  say that $Z\subset V$ is {\em $\prec$-bounded}  if
\begin{equation} \label{precbound}
\forall \vecv\in Z\  \exists \vecz\in \min_\prec(Z),\ \vecz\preceq
\vecv
\end{equation}
That is, for all $\vecz\in Z$,
$\vecz$ is $\prec$-minimal or is preceded by some vector that is
$\prec$-minimal in $Z$.

We  say that an outcome $S$ is $\prec$-bounded if $S^*$ is
$\prec$-bounded, that is,
\begin{equation} \label{meanprec}
\forall \vecv\in S^*\  \exists \vecz\in \min_\prec(S^*),\ \vecz\preceq
\vecv
\end{equation}


When all outcomes $S(u,\vecv)$ are $\prec$-bounded,
we say that a set of vectors $R$ {\em represents} $\vecv$ if
any outcome consistent with $\vecv$
has a $\prec$-minimal vector
in $R$:
$$\forall u\in (0,1], \exists \vecz\in R,\
 \vecz\in \min_{\prec}(S^*(u,\vecv))\ .$$

We now show that we can obtain a $\prec^{+}$-optimal estimator
if every vector $\vecv$ has a set of finite size that represents it.
Example \ref{precex} walks through a derivation of
$\prec^+$-optimal  estimators.

\begin{lemma}  \label{discssprecopt}
If $f$ satisfies \eqref{nec_req}, \eqref{unilambdacond},
\eqref{meanprec} and
{\small
\begin{equation*}
\forall \vecv,\ \min \{|R|\, \vert\, \forall u\in (0,1], \exists \vecz\in R,\ \vecz\in \min_{\prec}  S^*(u,\vecv)\} < \infty\ ,\label{finprep}
\end{equation*}
}
then a $\prec^+$-optimal estimator exists and must be
equivalent to a solution of
\eqref{pplusoptd}.
\end{lemma}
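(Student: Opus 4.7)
The plan is to prove the lemma in three pieces: construct a solution of \eqref{pplusoptd} by iterative extension, verify that any such solution is $\prec^+$-optimal, and conclude that every $\prec^+$-optimal estimator is equivalent to such a solution via the admissibility characterization in Theorem~\ref{domshared:thm}.

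For existence, I would build the estimator one ``level'' at a time. Fix any $\vecv$ and a minimal representing set $R_\vecv$ (finite by hypothesis). Topologically sort $R_\vecv$ consistently with $\prec$ and process its elements $\vecz_1,\vecz_2,\ldots,\vecz_k$ in this order. When processing $\vecz_i$, let $\rho_i \in (0,1]$ be the supremum of seed values for which $\hat{f}(\cdot,\vecz_i)$ has already been defined (from outcomes shared with earlier representatives), and let $M_i = \int_{\rho_i}^1 \hat{f}(u,\vecz_i)\,du$. Extend $\hat{f}$ on the outcomes $S(u,\vecz_i)$ with $u\in(0,\rho_i]$ using the $\vecz_i$-optimal extension $\hat{f}^{(\vecz_i,\rho_i,M_i)}$ from Theorem~\ref{voptlh}. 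After finitely many steps, the estimator is fully specified on every outcome consistent with $\vecv$. Lemma~\ref{completioncoro} (together with \eqref{nec_req}) ensures the partial specifications we build at each stage extend to a bona fide unbiased nonnegative estimator. By construction, on each outcome $S=S(\rho,\vecz_i)$ processed, the estimate equals $\lambda(\rho,\vecz_i,M_i)$, which by the choice of $\rho_i$ and the $\prec$-minimality of $\vecz_i$ in $S^*$ is precisely $\lambda(\rho,\min_\prec(S^*))$ (well-defined by \eqref{unilambdacond}). The $\prec$-boundedness hypothesis \eqref{meanprec} guarantees that every outcome's $\prec$-minimal set is nonempty and is reached by this process.

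For $\prec^+$-optimality of the constructed estimator, observe that on outcomes $S$ whose $\prec$-minimal set contains some $\vecz$, our estimate coincides with the $\vecz$-optimal extension given the specification on less-informative outcomes. Hence any competitor that strictly lowers the variance on $\vecz$ must modify the estimator at some less-informative outcome $S'\supsetneq S$. But such an $S'$ has its own $\prec$-minimal set, and any vector in it precedes $\vecz$ (by \eqref{meanprec}); modifying the estimator there would either violate the in-range condition at $S'$ or raise the variance on some $\vecw\prec\vecz$. This is the crux, and I would formalize it by induction along the $\prec$-order of the representative sets.

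For the converse, I would invoke Theorem~\ref{rvarplus}: a $\prec^+$-optimal $\hat{f}$ is, in particular, admissible on $Z_\vecv=\{\vecz:\vecz\preceq\vecv\}$ for every $\vecv$ (otherwise we could strictly improve the variance on $\vecv$ without worsening it on anything preceding $\vecv$). Thus the in-range condition \eqref{necessary} holds with $Z=Z_\vecv$ almost everywhere on outcomes consistent with $\vecv$. When $\vecv\in\min_\prec(S^*(\rho,\vecv))$, the set $Z_\vecv\cap S^*(\rho,\vecv)$ is exactly $\min_\prec(S^*(\rho,\vecv))$, and by \eqref{unilambdacond} the lower and upper bounds of \eqref{necessary} coincide, forcing $\hat{f}(\rho,\vecv)=\lambda(\rho,\min_\prec(S^*(\rho,\vecv)))$ almost everywhere; \eqref{meanprec} then propagates this identity to every $\vecv$, yielding equivalence with a solution of \eqref{pplusoptd}.

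The main obstacle is the existence argument: ensuring that the iterative construction, carried out independently along the representing sets of different vectors, yields a single globally consistent estimator rather than pointwise-inconsistent specifications. The key technical check is that whenever two representatives $\vecz_i,\vecz_j$ share an outcome $S=S(\rho,\vecz_i)=S(\rho,\vecz_j)$, the extensions produced by the two processing streams agree (up to the equivalence \eqref{lebesguediff}); this follows because both reduce to $\lambda(\rho,\min_\prec(S^*))$ by \eqref{unilambdacond}, but the bookkeeping across the tree of outcomes is where care is required.
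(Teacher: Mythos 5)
Your proposal follows essentially the same route as the paper: select a finite representing set for each $\vecv$, process the representatives in $\prec$-compatible order, and extend the partially specified estimator at each step via the $\vecz$-optimal extension of Theorem~\ref{voptlh}, then verify consistency across overlapping branches. Two points are worth flagging. First, your necessity argument is actually \emph{more} explicit than the paper's: you invoke Theorem~\ref{rvarplus} to get that a $\prec^+$-optimal estimator must be in-range with respect to $\cl_\prec(\vecv)$, and then let \eqref{unilambdacond} collapse the range to a single value; the paper instead leaves necessity implicit in the claim that each step of the construction is forced up to equivalence. Second, your existence argument has a small gap at the very end: you lean on Lemma~\ref{completioncoro} to extend partial specifications, but once the process terminates the estimator is \emph{fully} specified for $\vecv$ and there is nothing left to extend, so one must separately verify unbiasedness for $\vecv$. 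The paper does this by observing that the last representative $\vecz^{(n)}$ must satisfy $f(\vecz^{(n)})=f(\vecv)$ (otherwise \eqref{nec_req} fails for one of the two), so unbiasedness for $\vecz^{(n)}$ yields unbiasedness for $\vecv$; your proposal should add this check rather than appealing to Lemma~\ref{completioncoro}, which does not apply to the completed specification.
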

\begin{proof}
We provide an explicit
construction of a $\prec^{+}$-optimal estimator for $f$.

Fixing $\vecv$,
we select a finite set of representatives.
We can map the representatives (or a subset of them)  to distinct
subintervals covering $(0,1]$.
The subintervals have the form $(a_i,a_{i-1}]$ where $0= a_n<\cdots a_1<a_0=1$ such that
a representative $\vecz$ that is minimal for $(a_i,a_{i-1}]$ is not
minimal for $u\leq a_i$. Such a mapping  can always be obtained since
from \eqref{openset:lemma},
each vector is consistent with an open interval of the form $(a,1]$, and thus if
$\prec$-minimum at $S^*(u,\vecv)$ (we must have $u>a$) it must be
$\prec$-minimum for $S^*(x,\vecv)$ for $x\in (a,u]$.  Thus, the
region on which $\vecz$ is in $\min_{\prec} S^*(u,\vecv)$ is open
to the left.  We can always choose a mapping such that the left
boundary of this region corresponds to $a_i$.


Let $\vecz^{(i)}$ ($i\in [n]$) be the
representative mapped to outcomes $S(u,\vecv)$  where $u\in (a_{i},a_{i-1}]$.
Since $S^*(u,\vecv)$ is monotone non-decreasing with $u$,
$i<j$ implies that $\vecz^{(i)}\prec \vecz^{(j)}$ or that
they are incomparable in the partial order.

We construct a partially specified nonnegative
estimator in steps, by
solving \eqref{pplusoptd} iteratively for the vectors
$\vecz^{(i)}$.
Initially we invoke Theorem~\ref{voptlh}  to obtain estimate
values for $S(u,\vecz^{(1)})$ $u\in (0,1]$ that minimize the
variance for $\vecz^{(1)}$.  The result is a partially specified
nonnegative estimator.  In particular for $\vecv$, the estimator is
now specified for outcomes $S(u,\vecv)$ where $u\in (a_1,1]$.
Any modification of this estimator on a subinterval of $(a_1,1]$ with
positive measure will strictly increase the variance for $\vecz^{(1)}$ (or result in an estimator that can not be completed to a
nonnegative unbiased one).

 After step $i$, we have a partially specified nonnegative estimator
 that is specified for
$S(u,\vecv)$ for $u\in (a_i,1]$.  The estimator is fully specified for
$\vecz^{(j)}$ $j\leq i$ and is $\prec^+$-optimal on these vectors in the sense that
any other partially specified nonnegative estimator that is fully
specified for $\vecz^{(j)}$ $j\leq i$ and has strictly lower variance on some
$\vecz^{(j)}$ ($j\leq i$) must have strictly higher variance on some $\vecz^{(h)}$
such that $h<j$.

We now invoke Theorem~\ref{voptlh} with respect to the vector $\vecz^{(i+1)}$.  The estimator is partially specified for $S(u,\vecz^{(i+1)})$ on $u>a_{i}$ and we obtain estimate values for
the outcomes $S(u,\vecz^{(i+1)})$ for
$u\in (0,a_{i}]$ that constitute a partially specified nonnegative
estimator with minimum variance for $\vecz^{(i+1)}$.
Note again that this completion is unique (up to equivalence).
This extension now defines
$S(u,\vecv)$ for $u\in (a_{i+1},1]$.

Lastly, note that
we must have $f(\vecz^{(n)})=f(\vecv)$ because
$f(\vecz^{(n)})<f(\vecv)$ implies that
\eqref{nec_req} is violated for $\vecv$ whereas the reverse
inequality
implies that
\eqref{nec_req} is violated for $\vecz^{(n)}$.
Since at step $n$ the estimator is specified for all outcomes $S(u,\vecz^{(n)})$ and unbiased, it is unbiased for $\vecv$.

The estimator is
invariant to the choice of the representative sets $R_{\vecv}$ for $\vecv
\in V$ and also remains the same if we restrict $\prec$
so that it includes only relations between $\vecv$ and $R_{\vecv}$.

We so far showed that there is a unique, up to equivalence, partially specified nonnegative
estimator that is $\prec^+$ optimal with respect to a vector $\vecv$
and all vectors it depends on.
Consider now all outcomes $S(u,\vecv)$, for all $u$ and $\vecv$,
arranged according to the containment order on $S^*(u,\vecv)$ according to
decreasing $u$ values with branching points when $S^*(u,\vecv)$
changes.
If for two vectors $\vecv$ and $\vecz$, the sets of outcomes
$S(u,\vecv), u\in (0,1]$ and $S(u,\vecz), u\in (0,1]$ intersect,
the intersection must be equal for  $u>\rho$ for some $\rho<1$.
In this case the estimator values computed with respect to either
$\vecz$ or $\vecv$ would be identical for $u\in (\rho,1]$.  Also
note that partially specified nonnegative solutions on different branches are
independent.  Therefore, solutions with respect to
different vectors $\vecv$  can be consistently combined to a fully specified estimator.
\end{proof}

\subsection{Continuous domains}
The assumptions of Lemma~\ref{discssprecopt} may break on continuous domains.
 Firstly, outcomes may not be $\prec$-bounded and in particular, $\min_{\prec}(S^*)$ can be empty even when
$S^*$ is not, resulting in  \eqref{pplusoptd} not being well
defined. Secondly,
even if $\prec$ is a total order, minimum elements do not necessarily
exist and thus \eqref{meanprec} may not hold, and
lastly, there may not be a finite set of representatives.
To treat such domains, we utilize a notion of {\em convergence with respect to
$\prec$}:

We define
the {\em $\prec$-$\lim$} of a function $h$ on a set of
vectors $Z\subset V$:
\begin{eqnarray}
&& \prec\mbox{-}\lim(h(\cdot),Z)=x \iff  \label{precoptdef:alg}\\
&& \forall \vecv\in Z\  \forall \epsilon>0\ \exists \vecw\preceq
    \vecv, \forall \vecz\preceq
\vecw,\ |h(\vecz)- x| \leq \epsilon\nonumber
\end{eqnarray}
The $\prec$-$\lim$ may not exist but is unique if it does.
Note that when $Z$ is finite or more generally, $\prec$-bounded, and
$h(\vecz)$ is unique for
all $\vecz\in \min_{\prec} Z)$, then
${\prec}\text{-}\lim(h(\cdot),Z)=h(\min_{\prec} Z)$.


 We define the $\prec$-closure of $\vecz$ as the set containing $\vecz$ and
all preceding vectors $\cl_{\prec}(\vecz) = \{\vecv\in V | \vecv\preceq \vecz\}$.

We provide  an alternative definition of the $\prec$-$\lim$
using the notion of $\prec$-closure.
\begin{eqnarray}
&& \prec\text{-}\lim(h(\cdot),Z)=x  \label{supinfdefpreclim}\\
 &\iff & \inf_{\vecv\in  Z}\sup_{\vecz\in \cl_\prec(\vecv)\cap Z} h(\vecz)=
 \sup_{\vecv\in Z}\inf_{\vecz\in \cl_\prec(\vecv)\cap Z} h(\vecz)=x\nonumber
\end{eqnarray}


We say that the lower bound function
 {\em$\prec$-converges} on outcome $S=S(\rho,\vecv)$
if ${\prec}\text{-}\lim(\underline{f}(\eta,\cdot),S^*)$ exists
for all $\eta\in (0,\rho)$.
When this holds,
the $\prec\text{-}\lim$ of the optimal values
\eqref{lambdaMdef} over consistent vectors  $S^*$ exists
for all $M=\int_{\rho}^1 \hat{f}(u,\vecv)du \leq
\underline{f}(\rho,\vecv)$.
We use the notation
\begin{eqnarray*}
\lefteqn{\lambda_{\prec}(S,M)={\prec}\text{-}\lim(\lambda(\rho,\cdot,M),S^*)}\\
&=&\inf_{0\leq \eta < \rho} \frac{\prec\text{-}\lim(\underline{f}(\eta,\cdot),S^*)-M}{\rho-\eta}\ .
\end{eqnarray*}
When the partially specified estimator $\hat{f}$ is clear from context,
we omit the parameter $M$ and use the notation
\begin{eqnarray*}
\lefteqn{\lambda_{\prec}(S)={\prec}\text{-}\lim(\lambda(\rho,\cdot),S^*)}\\
&=&\inf_{0\leq \eta < \rho} \frac{\prec\text{-}\lim(\underline{f}(\eta,\cdot),S^*)-\int_{\rho}^1 \hat{f}(u,\vecv)
  du}{\rho-\eta}\ .
\end{eqnarray*}

We can finally propose a generalization of \eqref{pplusoptd}:
\begin{equation}\label{shscpreccond:eq}\forall S,\ \hat{f}(S)=
  \lambda_{\prec}(S)\ \end{equation}
which is well
defined when the lower bound function
$\prec$-converges for all $S$:
\begin{equation} \label{pwconv}
\forall  S=S(\rho,\vecv), \forall \eta\leq \rho,\
{\prec}\text{-}\lim(\underline{f}(\eta,\cdot),S^*)\ \text{exists.}
\end{equation}

\ignore{
In the remaining part of the section we show the following:
\begin{theorem} \label{precpluscon:thm}
If $f$ satisfies \eqref{nec_req}  and  \eqref{pwconv}
then
a $\prec^+$-optimal estimator exists and must be
equivalent to a solution of \eqref{shscpreccond:eq}.
\end{theorem}
}
Using the definition \eqref{supinfdefpreclim} of $\prec$-convergence
and \eqref{lebesguediff} we obtain that an estimator is
equivalent to \eqref{shscpreccond:eq} if and only if
\begin{equation}\label{eshscpreccond:eq}
\forall \vecv\forall \rho\in (0,1],  \lim_{\eta\rightarrow \rho^-} \frac{\int_\eta^{\rho} \hat{f}(u,\vecv)du}{\rho-\eta}  = \lambda_{\prec}(\rho,\vecv)
\end{equation}

We show that equivalence to \eqref{shscpreccond:eq}
is {\em necessary} for $\prec^+$-optimality.  To facilitate the proof,
we express $\prec^+$-optimality in terms of
restricted admissiblity:
\begin{lemma} \label{varpprecp:lemma}
 An  estimator is $\prec^+$-optimal if and only if, for all $\vecv\in
 V$,  it is  admissible with respect to $\cl_\prec(\vecv)$.
\end{lemma}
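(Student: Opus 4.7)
The plan is to prove both directions by direct unwrapping of the two definitions. Unlike the earlier structural results (e.g.\ Theorem~\ref{domshared:thm}), no construction of witness estimators is required: the witness in each direction is the alternative estimator supplied by the negation of the hypothesis, and the work is entirely in bookkeeping which data vectors it does better or worse on. I therefore do not expect any real technical obstacle; the care lies in matching quantifiers and in exploiting transitivity of $\prec$.

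For the ``if'' direction, I would assume that $\hat{f}$ is admissible on $\cl_\prec(\vecv)$ for every $\vecv$, and argue by contradiction. If $\hat{f}$ is not $\prec^+$-optimal, there is a nonnegative unbiased $\hat{f}'$ and some $\vecv$ with $\var[\hat{f}'|\vecv]<\var[\hat{f}|\vecv]$ while $\var[\hat{f}'|\vecz]\le\var[\hat{f}|\vecz]$ for every $\vecz\prec\vecv$. Taking $Z=\cl_\prec(\vecv)$, which by definition contains $\vecv$ together with every $\vecz\prec\vecv$, these inequalities say precisely that $\hat{f}'$ has variance no larger than $\hat{f}$ on every element of $Z$ and strictly smaller on $\vecv\in Z$, contradicting admissibility on $\cl_\prec(\vecv)$.

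For the ``only if'' direction, I would assume $\hat{f}$ is $\prec^+$-optimal and show admissibility on $\cl_\prec(\vecv)$ for an arbitrary $\vecv$, again by contradiction. A violating $\hat{f}'$ yields some $\vecw_0\in \cl_\prec(\vecv)$ with $\var[\hat{f}'|\vecw_0]<\var[\hat{f}|\vecw_0]$ and $\var[\hat{f}'|\vecw]\le\var[\hat{f}|\vecw]$ for all other $\vecw\in\cl_\prec(\vecv)$. The key observation is that $\cl_\prec(\vecw_0)\subseteq\cl_\prec(\vecv)$ by transitivity of $\prec$, so every $\vecz\prec\vecw_0$ already lies in $\cl_\prec(\vecv)$ and thus satisfies $\var[\hat{f}'|\vecz]\le\var[\hat{f}|\vecz]$. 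Hence $\vecw_0$ witnesses a violation of $\prec^+$-optimality of $\hat{f}$, a contradiction. (If $\vecw_0=\vecv$ the argument is immediate; if $\vecw_0\prec\vecv$ this is exactly where transitivity is used.)

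The only subtlety worth flagging in the writeup is that $\cl_\prec(\vecv)$ is a $\preceq$-closed set while $\prec^+$-optimality is phrased in terms of strict $\prec$-predecessors together with the distinguished vector $\vecv$; the two packagings agree because $\cl_\prec(\vecv)=\{\vecv\}\cup\{\vecz:\vecz\prec\vecv\}$, and strict precedence is transitive. No other property of $\prec$, of the sampling scheme, or of $f$ is used, so the lemma holds in full generality and provides the reduction we need in subsequent arguments that establish $\prec^+$-optimality by verifying admissibility closure-by-closure.
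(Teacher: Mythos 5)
Your proof is correct and follows essentially the same two-sided unwrapping of the definitions as the paper's own argument; in particular, the paper also invokes (somewhat tacitly) the transitivity fact that $\cl_\prec(\vecv)$ contains every predecessor of any $\vecz\in\cl_\prec(\vecv)$, which you make explicit as $\cl_\prec(\vecw_0)\subseteq\cl_\prec(\vecv)$. No substantive difference.
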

\begin{proof}
If there is $\vecv$ such that $\hat{f}$ is not admissible
on $\cl_\prec(\vecv)$,
there is an alternative estimator with strictly lower
variance on some $\vecz\in \cl_\prec(\vecv)$ and at most the variance on all
$\cl_\prec(\vecv)\setminus\{\vecz\}$.  Since $\cl_\prec(\vecv)$ contains all vectors that
precede $\vecz$, the estimator $\hat{f}$ can not be
$\prec^+$-optimal.
To establish the converse, assume an estimator $\hat{f}$ is
admissible on $\cl_\prec(\vecv)$ for all $\vecv$.  Consider
$\vecz\in V$.  Since $\hat{f}$ is admissible on
$\cl_\prec(\vecz)$,
there is no alternative estimator with strictly lower variance on
$\vecz$ and at most the variance of $\hat{f}$ on all preceding vectors.  Since
this holds for all $\vecz$, we obtain that $\hat{f}$ is $\prec^+$-optimal.
\end{proof}

\begin{lemma}
If $f$ satisfies \eqref{nec_req}  and  \eqref{pwconv}
then $\hat{f}$ is $\prec^{+}$-optimal only if
it satisfies \eqref{eshscpreccond:eq}.
\end{lemma}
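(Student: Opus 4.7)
The plan is to combine Lemma~\ref{varpprecp:lemma} with the necessary in-range condition of Theorem~\ref{rvarplus}, applied to each set $Z=\cl_\prec(\vecv)$, and then eliminate the dependence on the choice of $\vecv$ by intersecting the resulting intervals.

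First I would apply Lemma~\ref{varpprecp:lemma}: $\prec^+$-optimality of $\hat f$ is equivalent to admissibility of $\hat f$ on $\cl_\prec(\vecv)$ for every $\vecv\in V$. Then I would invoke Theorem~\ref{rvarplus} with $Z=\cl_\prec(\vecv)$. This yields, for each $\vecv'\in\cl_\prec(\vecv)$ and almost every $\rho\in(0,\rho_{\vecv'}]$,
\begin{equation*}
\inf_{\vecz\in \cl_\prec(\vecv)\cap S^*(\rho,\vecv')}\lambda(\rho,\vecz)
\;\le\; \lim_{\eta\to\rho^-}\frac{\int_\eta^\rho \hat f(u,\vecv')\,du}{\rho-\eta}
\;\le\; \sup_{\vecz\in \cl_\prec(\vecv)\cap S^*(\rho,\vecv')}\lambda(\rho,\vecz).
\end{equation*}

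Next I would hold $\vecv'$ and $\rho$ fixed and let $\vecv$ vary over all vectors with $\vecv'\preceq \vecv$ (so that $\vecv'\in\cl_\prec(\vecv)$ and the above applies). The average $\lim_{\eta\to\rho^-}(\rho-\eta)^{-1}\int_\eta^\rho \hat f(u,\vecv')\,du$ must lie in the intersection of all these intervals, giving
\begin{equation*}
\sup_{\vecv\succeq\vecv'}\inf_{\vecz\in \cl_\prec(\vecv)\cap S^*(\rho,\vecv')}\lambda(\rho,\vecz)
\;\le\; \text{avg}
\;\le\; \inf_{\vecv\succeq\vecv'}\sup_{\vecz\in \cl_\prec(\vecv)\cap S^*(\rho,\vecv')}\lambda(\rho,\vecz).
\end{equation*}

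The main obstacle is the final step, where one must show that under the pointwise $\prec$-convergence hypothesis \eqref{pwconv} both outer bounds collapse to $\lambda_\prec(\rho,\vecv')$. Using the equivalent sup-inf/inf-sup characterization \eqref{supinfdefpreclim} of $\prec$-$\lim$, both bounds equal $\prec\text{-}\lim(\lambda(\rho,\cdot),S^*(\rho,\vecv'))$, and by the commutation of $\prec$-$\lim$ with the $\inf_\eta$ that appears in the definition of $\lambda_\prec(S,M)$, this coincides with $\lambda_\prec(\rho,\vecv')$. The delicate part is justifying this commutation: pointwise $\prec$-convergence of $\underline f(\eta,\cdot)$ for each $\eta$ must be lifted to $\prec$-convergence of $\lambda(\rho,\cdot,M)=\inf_\eta[(\underline f(\eta,\cdot)-M)/(\rho-\eta)]$, which relies on monotonicity and left-continuity of $\underline f^{(\cdot)}$ \eqref{proplb} together with \eqref{openset:lemma}. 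A secondary bookkeeping issue is that our outer sup and inf range over $\vecv\succeq\vecv'$, whereas \eqref{supinfdefpreclim} ranges over $S^*(\rho,\vecv')$; these are reconciled by noting that $\vecv'\in\cl_\prec(\vecv)\cap S^*(\rho,\vecv')$ precisely when $\vecv'\preceq\vecv$, so the relevant extremum is attained on the common refinement $\{\vecv\in S^*(\rho,\vecv') : \vecv\succeq\vecv'\}$. Combining the collapsed bounds yields \eqref{eshscpreccond:eq}, completing the proof.
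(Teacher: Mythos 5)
Your first two steps coincide with the paper's own proof: reduce $\prec^+$-optimality to admissibility on each $\cl_\prec(\vecv)$ via Lemma~\ref{varpprecp:lemma}, then apply Theorem~\ref{rvarplus} with $Z=\cl_\prec(\vecv)$. The gap is in your intersection step. You keep the outcome anchored at $\vecv'$ and vary only the closure set, over $\vecv\succeq\vecv'$. But for $\vecv\succeq\vecv'$, transitivity gives $\cl_\prec(\vecv)\cap S^*(\rho,\vecv')\supseteq\cl_\prec(\vecv')\cap S^*(\rho,\vecv')$, so every interval you intersect contains the one obtained at $\vecv=\vecv'$; the intersection is therefore exactly
\begin{equation*}
\inf_{\vecz\in \cl_\prec(\vecv')\cap S^*(\rho,\vecv')}\lambda(\rho,\vecz)\ \le\ \text{avg}\ \le\ \sup_{\vecz\in \cl_\prec(\vecv')\cap S^*(\rho,\vecv')}\lambda(\rho,\vecz),
\end{equation*}
i.e., nothing beyond the single application of \eqref{inecessary} at $\vecv'$. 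These two bounds are not the sup-inf and inf-sup of \eqref{supinfdefpreclim}: there the outer variable ranges over all of $S^*(\rho,\vecv')$, in particular over $\prec$-small vectors whose closures do not contain $\vecv'$, and it is precisely by pushing that variable down the order that the bounds squeeze to the $\prec$-limit. Your interval in general remains a nondegenerate interval containing $\lambda_\prec(\rho,\vecv')$, so the claimed collapse to \eqref{eshscpreccond:eq} does not follow, and the ``common refinement'' remark does not repair this, because the extremum over $\{\vecv\succeq\vecv'\}$ is still attained at $\vecv=\vecv'$.

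What is missing is the observation the paper uses to let the outer extremum range over all of $S^*(\rho,\vecv')$: for every $\vecw\in S^*(\rho,\vecv')$ the outcomes agree on a left-neighborhood of $\rho$ (by \eqref{openset:lemma}), so $\lim_{\eta\to\rho^-}(\rho-\eta)^{-1}\int_\eta^\rho\hat f(u,\vecw)\,du$ is the \emph{same} quantity as for $\vecv'$. One then applies the in-range condition for $Z=\cl_\prec(\vecw)$ at $\vecw$ itself, for each $\vecw\in S^*(\rho,\vecv')$ --- including $\vecw$ that precede $\vecv'$ or are incomparable --- and transfers all of these constraints to the common limit. This yields the lower bound $\sup_{\vecw\in S^*}\inf_{\vecz\in\cl_\prec(\vecw)\cap S^*}\lambda(\rho,\vecz)$ and the upper bound $\inf_{\vecw\in S^*}\sup_{\vecz\in\cl_\prec(\vecw)\cap S^*}\lambda(\rho,\vecz)$, which by \eqref{supinfdefpreclim} and the $\prec$-convergence of $\lambda(\rho,\cdot)$ guaranteed under \eqref{pwconv} both equal $\lambda_\prec(\rho,\vecv')$, giving \eqref{eshscpreccond:eq}. (The commutation of the $\prec$-limit with $\inf_\eta$ that you flag as delicate is how the paper defines $\lambda_\prec$ and is taken to follow from \eqref{pwconv}; the structural issue above is the real obstruction in your argument.)
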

\begin{proof}
Lemma~\ref{varpprecp:lemma} states that an estimator is $\prec^+$-optimal
if and only if $\forall \vecw\in V$ it is admissible with respect to $\cl_\prec(\vecw)$.
Applying Lemma~\ref{rvarplus}, the latter holds only if
\begin{eqnarray}
\lefteqn{ \forall \vecv\in V\, \forall \rho\in (0,1]}  \label{iff1}\\
 \lim_{\eta\rightarrow \rho^-} \frac{\int_\eta^{\rho} \hat{f}(u,\vecv)du}{\rho-\eta} & \geq &\inf_{\vecz\in \cl_\prec(\vecv)\cap S^*(\rho,\vecv)}
 \lambda(\rho,\vecz)  \nonumber \\
& \leq &  \sup_{\vecz\in \cl_\prec(\vecv)\cap
 S^*(\rho,\vecv)} \lambda(\rho,\vecz) \nonumber
\end{eqnarray}
From definition,
$S(\rho,\vecz)\equiv S(\rho,\vecv)$
for all vectors $\vecz\in S^*(\rho,\vecv)$.
Moreover, for $\vecz\in S^*(\rho,\vecv)$ there is a nonempty
interval $(\eta_z,\rho]$ such that $\forall u\in (\eta_z,\rho]$,
$S^*(u,\vecz\equiv S^*(u,\vecv)$.
Therefore,  for all $\vecz\in S^*(\rho,\vecv)$, the limits
$\lim_{\eta\rightarrow \rho^-} \frac{\int_\eta^{\rho} \hat{f}(u,\vecz)du}{\rho-\eta}$ are the same.
Therefore, \eqref{iff1} $\iff$
\begin{eqnarray}
\lefteqn{ \forall \vecv\in V\, \forall \rho\in (0,1]}  \label{iff2}\\
 \lim_{\eta\rightarrow \rho^-} \frac{\int_\eta^{\rho} \hat{f}(u,\vecv)du}{\rho-\eta} & \geq & \sup_{\vecw\in S^*(\rho,\vecv)} \inf_{\vecz\in \cl_\prec(\vecw)\cap S^*(\rho,\vecv)}
 \lambda(\rho,\vecz)  \nonumber \\
& \leq &  \inf_{\vecw\in S^*(\rho,\vecv)} \sup_{\vecz\in \cl_\prec(\vecw)\cap S^*(\rho,\vecv)}
 \lambda(\rho,\vecz)  \nonumber
\end{eqnarray}
\end{proof}

  We leave open the question of determining the most inclusive
  conditions on $f$ and $\prec$ 
under which a
  $\prec^+$-optimum exists, and thus
the solution of \eqref{shscpreccond:eq}
  is $\prec^+$-optimal.  
We  show that any solution of \eqref{shscpreccond:eq}
is unbiased and nonnegative when $f$ has a nonnegative unbiased estimator.
\begin{lemma}
When $f$ and $\prec$ satisfy \eqref{nec_req} and \eqref{pwconv},
a solution $\hat{f}^{(\prec+)}$  of
\eqref{shscpreccond:eq} is unbiased and nonnegative.
\end{lemma}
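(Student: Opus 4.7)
My plan is to reduce this statement directly to Lemma~\ref{rangeunon}, which already does the heavy lifting: any in-range estimator is automatically unbiased and nonnegative whenever $f$ satisfies \eqref{nec_req}. So the whole task is to argue that a solution $\hat{f}^{(\prec+)}$ of \eqref{shscpreccond:eq} is in-range on every outcome, i.e.\ that for all $S=S(\rho,\vecv)$, with $M=\int_\rho^1 \hat{f}^{(\prec+)}(u,\vecv)du$,
\begin{equation*}
\lambda_L(S,M) \;\leq\; \lambda_\prec(S,M) \;\leq\; \lambda_U(S,M).
\end{equation*}

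The key tool for the sandwich inequality is the alternative characterization \eqref{supinfdefpreclim} of $\prec$-$\lim$. For any function $h$ defined on a set $Z$ on which the $\prec$-limit exists, I note that $\vecv\in\cl_\prec(\vecv)$, so $\sup_{\vecz\in \cl_\prec(\vecv)\cap Z}h(\vecz)\geq h(\vecv)$ and $\inf_{\vecz\in \cl_\prec(\vecv)\cap Z}h(\vecz)\leq h(\vecv)$. Taking $\inf$ and $\sup$ over $\vecv\in Z$ respectively and invoking \eqref{supinfdefpreclim} gives
\begin{equation*}
\inf_{\vecz\in Z} h(\vecz) \;\leq\; \prec\text{-}\lim(h(\cdot),Z) \;\leq\; \sup_{\vecz\in Z} h(\vecz).
\end{equation*}
Applying this with $h(\vecz)=\lambda(\rho,\vecz,M)$ and $Z=S^*(\rho,\vecv)$, and recalling the definitions \eqref{lambdaULdef}, \eqref{lambdaMLdef} of $\lambda_U$ and $\lambda_L$, yields exactly the in-range inequality above. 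Hypothesis \eqref{pwconv} ensures the $\prec$-$\lim$ exists for every $\eta\leq\rho$, so that $\lambda_\prec(S,M)$ is a well-defined quantity and the definition \eqref{shscpreccond:eq} is not vacuous.

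With in-range established almost everywhere, I would then invoke Lemma~\ref{rangeunon} verbatim, using the assumption \eqref{nec_req} that $f$ admits some nonnegative unbiased estimator. The lemma's conclusion that $\hat{f}^{(\prec+)}$ is nonnegative and unbiased is then immediate. I do not expect any genuine obstacle in this argument; the only mildly delicate point is to confirm that the two expressions for $\lambda_\prec(S,M)$ in the paper (the $\prec$-$\lim$ of $\lambda(\rho,\cdot,M)$ and the infimum over $\eta$ of a quotient involving $\prec\text{-}\lim(\underline{f}(\eta,\cdot),S^*)$) are both the same quantity that is being sandwiched, which follows from the monotonicity of $\underline{f}(\eta,\cdot)$ in $\eta$ together with condition \eqref{pwconv} so that the $\prec$-$\lim$ commutes with the infimum over $\eta$.
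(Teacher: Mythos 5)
Your proposal is correct and follows essentially the same route as the paper: the paper's own proof simply observes that a solution of \eqref{shscpreccond:eq} has all its values in the optimal range and then invokes Lemma~\ref{rangeunon}. Your explicit sandwich argument, showing via \eqref{supinfdefpreclim} that the $\prec$-$\lim$ of $\lambda(\rho,\cdot,M)$ over $S^*$ lies between $\lambda_L(S,M)$ and $\lambda_U(S,M)$, just spells out the "all values are in-range" step that the paper leaves implicit.
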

\begin{proof}
From Lemma~\ref{rangeunon},
since all values are in-range, the solution
is unbiased and nonnegative.
\end{proof}

\notinproc{
\begin{lemma}
When $f$ and $\prec$ satisfy \eqref{nec_req} and \eqref{pwconv},
the solution $\hat{f}^{(\prec+)}$  of
\eqref{shscpreccond:eq} exists, is unique,  and is unbiased and nonnegative.
\end{lemma}
\begin{proof}
From Lemma~\ref{rangeunon},
since all values are in-range, the solution
is unbiased and nonnegative.

 It remains to establish existence and uniqueness.
 The optimal range is always nonempty, so fixing
$\int_\rho^1 \hat{f}(u,\vecv)du \leq \underline{f}(u,\vecv)$, the
 solution at $\rho$ is always defined.

We first argue that if a solution is defined for $u>  \rho$, we can
extend it to $(\rho-\epsilon,\rho]$ for some $\epsilon>0$ and the
extension is left continuous at $\rho$.
We denote a solution
of \eqref{shscpreccond:eq} by
$\hat{f}^{(\prec+)}(u,\vecv)$.

 Fixing the solution for $S(u,\vecv)$ for $u>\rho$, we consider the
 solution at $\rho$ and some neighborhood to the left for vectors in
 $S^*(\rho,\vecv)$.
From definition of $\lambda_{\prec}(\rho,\vecv)$, for any $\delta>0$
there is $\vecz\in S^*(\rho,\vecv)$ such that for all $\vecw\in
\cl_\prec(\vecz \cap S^*(\rho,\vecv)$, $|\lambda(\rho,\vecw)-\lambda_{\prec}(\rho,\vecv)|\leq \delta$.  We have
$$\sup_{\vecz \in S^*(\rho,\vecv)}    \leq \frac{\max_{\vecz \in
   S^*(\rho,\vecv)} f(\vecv)-\int_\rho^1   \hat{f}^{(\prec+)}(u,\vecv)du}{\rho}\ .$$  Also, all functions
$\hat{f}^{(\vecz)}$ are left continuous.

\end{proof}
}

\notinproc{
  To establish sufficiency, we
require a stronger property of $f$, precisely defined below, which is $\prec$-convergence
  of the L2-norm of the variance optimal estimators.
For $S(\rho,\vecv)$ and
$M\leq \underline{f}(S)$,
we define  $f^{(\vecz,S,M)}$ for $\vecz\in S^*$ as the
 minimum variance (for $\vecz$)
extension of an estimator $\hat{f}$
satisfying $\rho_v=\rho$ and $\int_\rho^1 \hat{f}(u,\vecv)du=M$ to
outcomes $S(u,\vecz)$ $u\in (0,\rho]$.
Adapting \eqref{condoptv2}, we obtain that it is a solution of
\begin{align} \label{condoptv2p}
\forall \mu\in (0,\rho],\ \quad\quad\quad \nonumber\\
\lim_{\eta\rightarrow \mu^{-}} \frac{\int_\eta^\mu \hat{f}(u,\vecv)du}{\mu-\eta} = \inf_{0\leq \eta < \mu}
\frac{\underline{f}(\eta,\vecv)-M-\int_{\mu}^\rho \hat{f}(u,\vecv)
  du}{\mu-\eta}\ .
\end{align}
To gain geometric intuition, recall that
$\hat{f}^{(\vecz,S,M)}$ is such that $M+\int_x^\rho \hat{f}^{(\vecz,S,M)}(u,\vecz)du$ is the lower boundary of  the convex hull
of $\underline{f}(u,\vecz)$ for $u\in (0,\rho)$ and the point
$(\rho,M)$.
\begin{definition} \label{convar}
We say that the L2-norm of the optimal estimator $\prec$-converges for
$f$ when
\begin{align}
& \forall S(\rho,\vecv), \forall M\leq
\underline{f}(S)\label{squarelim} \\
& \prec\text{-}\lim\bigg( \int_0^\rho (\hat{f}^{(\cdot,S,M)}(u,\cdot))^2du,S^*\bigg)
\text{ exists and is finite}
\end{align}
\end{definition}

 Another condition that holds for the L function with respect to a
 partially specified estimator:
$$\forall \vecv, \lim_{\rho \rightarrow 0} \sup_{M\leq
  \underline{f}(\rho,\vecv) } \frac{\lambda_\prec(\rho,\vecv,M)}{\lambda_L(\rho,\vecv,M)}< \infty$$

This condition means that the contribution to the variance of an
interval $(0,\epsilon)$
from $\{hat{f}_\prec$ is at most that of the optimum one.  So it
suffices to show that for some data vector,
on an interval $(\epsilon,\rho)$, the two
estimator are close in variance.  The latter we can show by showing
there are vectors with almost the same LB function as the
$\prec$-$\lim$ lower bound function.

 This condition also implies that if the optimum functions are all
 square integrable, convergence of L2 will follow.  So we can actually
require square integrability (which is iff existence of bounded
variance for all vectors) and the above.

 At outcome $S(\rho,\vecv)$ we consider the $\prec$-$\lim$ lower
 bound function:
$$\underline{f}_{*,S} (\eta)= \ $$ for $\eta\in (0,\rho]$.
We can now consider the ``variance optimal'' function with respect to
this function and a point $(\rho,M)$.  We denote it by
$\hat{f}^{\prec\text{-}\lim,\rho,\vecv,M}$.  This function is
determined by the convex hull of the point $(\rho,M)$
and $\underline{f}_{*,S} (\eta)$.
We can show that
If $f$ and $\prec$ satisfy \eqref{nec_req} and \eqref{pwconv}
 and $\prec$-convergence of L2 norms, then the limit
\eqref{squarelim} converges to
$$\int_0^\rho \hat{f}^{\prec\text{-}\lim,S,M}(u)^2 du\ .$$
\begin{proof}
We look at some $\epsilon$, and at vectors in $S^*(\rho,\vecv)$ for
which the lower bound function and integral of squares is very close
to $\underline{f}_{*,S} $
and $$\int_\epsilon^\rho \hat{f}^{\prec\text{-}\lim,S,M}(u)^2 du\ $$
on $(\epsilon,\rho]$.

****  to complete
\end{proof}

 \begin{lemma} \label{varconvlem}
If $f$ and $\prec$ satisfy \eqref{nec_req} and \eqref{pwconv}
 and $\prec$-convergence of L2 norms, and
$\hat{f}^{(\prec+)}$ is a solution of \eqref{eshscpreccond:eq}, then
\begin{align*}
&\forall \rho\in (0,1] \forall \vecv,\\
&\prec\text{-}\lim(\var[f^{(\cdot)}|\cdot],S^*(\rho,\vecv)) =
\prec\text{-}\lim(\var[f^{(\prec+)}|\cdot],S^*(\rho,\vecv))\ .
\end{align*}
where $\hat{f}^{(\vecz)}(u,\vecz)$ is the extension of
the restriction of $\hat{f}^{(\prec)}$ to $S(u,\vecv)$ $u\in
(\rho,1]$ with minimum variance on $\vecz$.
\end{lemma}
\begin{proof}

\end{proof}

 \begin{theorem}
If $f$ and $\prec$ satisfy \eqref{nec_req} and \eqref{pwconv}
 and $\prec$-convergence L2 norms,
then a solution $\hat{f}^{(\prec+)}$ of \eqref{eshscpreccond:eq}
must be $\prec^+$-optimal.
 \end{theorem}
\begin{proof}
Suppose it is not $\prec^+$-optimal.  Let $\vecv$ be such that it is possible to obtain an alternative estimator with lower variance on $\vecv$ without increasing variance to preceding vectors.
Let $\rho$ be supremum of $S(u,\vecv)$ on which the alternative estimator
does not satisfy \eqref{eshscpreccond:eq} and suppose the limit of the
alternative estimator is different by at least $\delta$.
There must be some interval $(\rho-\epsilon,\rho)$ where $\hat{f}^{(\prec+)}(u,\vecv)$ is within $\delta/20$ of $\hat{f}^{(\prec+)}(\rho,\vecv)$ but
on every subinterval, the expectation of the alternative estimator diverges
by at least $\delta/2$ from $\hat{f}^{(\prec+)}(\rho,\vecv)$.

 We now consider $S^*(\rho-\epsilon,\vecv)$ and claim that there is
a $\vecw$ such that for all $\vecz\in\cl_{\prec}(\vecw)$, the optimal
solution $\hat{f}^{(\vecz)}$ (fixing the solution on $S(u,\vecv)$ $u> \rho$)
is within $\delta/10$ of $\hat{f}^{(\prec+)}(\rho,\vecv)$ for $u\in (\rho-\epsilon,\rho)$:  clearly, there is such a vector $\vecw$ such that
every $\vecz\in \cl_{\prec}(\vecw)\cap S^*(\rho-\epsilon,\vecv)\equiv Z$
has $\hat{f}^{(\vecz)}(\rho-\epsilon,\vecz)$ within $\delta/10$ of
$\hat{f}^{(\prec+)}(\rho,\vecv)$.  We now claim that for $\vecz\in Z$,
$\hat{f}^{(\vecz)}(\rho,\vecz)$ could not have been much higher, since
a higher value will result in a lower one at $\rho-\epsilon$.  Similarly,
it could not have been much lower, since a lower value would have resulted
in a higher one at $\rho-\epsilon$.

 From $\prec$-convergence of variance and Lemma~\ref{varconvlem},
 at the points $u\in (0,\rho-\epsilon)$  the estimator $\hat{f}^{(\prec+)}$
is arbitrarily close to variance optimal for some vectors in $Z$.
Convergence in the following sense:
for any $\delta>0$ and outcome $S(\rho,\vecv)$, there is $\vecz\in
S^*$ for which the estimator $\hat{f}^{(\prec+)}$ on $S(u,\vecz)$
$u\in (0,\rho]$ is ``within $\delta$'' of the minimum variance
extension for $\vecz$.

  This optimality means that we can bound the maximum ``probability mass movement'' that can potentially reduce variance.

  The alternative estimator on the interval $(\rho-\epsilon,\rho)$ would result in a deviation of the integral at $\rho-\epsilon$ which can not be balanced without increasing variance.

\end{proof}
}

\section{The \U\ Estimator} \label{Uest:sec}

The estimator
$\hat{f}^{(U)}$  satisfies \eqref{b3:ineq} with equality.
\begin{equation} \label{Udefeq:eq}
\forall S(\rho,\vecv),\ \hat{f}(\rho,\vecv) = \sup_{\vecz\in S^*}\inf_{0\leq \eta < \rho} \frac{\underline{f}(\eta,\vecz)-\int_{\rho}^1 \hat{f}(u,\vecv) du}{\rho-\eta}
 \end{equation}

 The \U\ estimator is not always admissible. We do show, however,
that under a natural condition, it is
order-optimal with respect to an order that prioritizes vectors
with higher $f$ values (and hence also admissible).
 The condition states that
for all $S(\rho,\vecv)$ and $\eta<\rho$, the 
supremum of the lower bound function $\underline{f}(\eta,\vecz)$
over $\vecz\in S^*$ is attained (in the limiting sense) at
vectors that maximize $f$ on $S^*$.
Formally:
\begin{equation}  \label{UGcond}
\forall \eta < \rho,\ \lim_{x\rightarrow \overline{f}(S)} \sup_{\vecz\in S^* | f(\vecz)\geq x} \underline{f}(\eta,\vecz) = \sup_{\vecz\in S^*}
\underline{f}(\eta,\vecz)\ ,
\end{equation}
where $\overline{f}(S)=\sup_{\vecz\in S^*} f(\vecz)$.

\begin{lemma}
If $f$ satisfies  \eqref{UGcond}, then the \U\ estimator
is $\prec^+$-optimal with respect to the order
$\vecz\prec \vecv \iff f(\vecz) > f(\vecv)$.
\end{lemma}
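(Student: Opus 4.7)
The plan is to connect the \U\ estimator with the $\prec^+$-optimality characterization developed in Section~\ref{estPREC:sec}. Under the order $\vecz\prec\vecv \iff f(\vecz)>f(\vecv)$, the $\prec$-closure $\cl_\prec(\vecv)$ is the set of vectors with $f$-value at least $f(\vecv)$, and the $\prec$-minimal elements of any set are those maximizing $f$. I would establish $\prec^+$-optimality by first showing that the defining fixed-point equation \eqref{Udefeq:eq} for $\hat{f}^{(U)}$ coincides with the $\prec^+$-characterization \eqref{shscpreccond:eq}, and then arguing that this coincidence yields admissibility on every closure $\cl_\prec(\vecv)$.

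The first technical step evaluates the $\prec$-limit via its supremum-infimum formulation \eqref{supinfdefpreclim}. Since $\cl_\prec(\vecv)\cap S^* = \{\vecz\in S^* : f(\vecz)\geq f(\vecv)\}$, optimizing over $\vecv\in S^*$ amounts to letting $f(\vecv)\to\overline{f}(S)$, and condition \eqref{UGcond} forces both the outer sup-of-infs and the outer inf-of-sups appearing in \eqref{supinfdefpreclim} to collapse to $\sup_{\vecz\in S^*}\underline{f}(\eta,\vecz)$. Substituting into \eqref{shscpreccond:eq} one obtains $\lambda_\prec(S) = \inf_\eta (\sup_\vecz \underline{f}(\eta,\vecz) - M)/(\rho - \eta)$, whereas \eqref{Udefeq:eq} writes $\hat{f}^{(U)}(S) = \sup_\vecz\inf_\eta (\underline{f}(\eta,\vecz) - M)/(\rho - \eta)$. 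The weak inequality $\sup\inf \leq \inf\sup$ holds for free; the reverse direction is the main obstacle. I would extract a sequence $\vecz_n \in S^*$ with $f(\vecz_n)\to\overline{f}(S)$ whose lower-bound functions approach the upper envelope on the relevant range of $\eta$, and use the monotonicity and left-continuity of $\underline{f}$ recorded in \eqref{proplb} to argue that the minimizing $\eta$ in the inf-sup expression stays in a bounded region where the pointwise approximation guaranteed by \eqref{UGcond} transfers to an approximation of the negated-slope quantity $\lambda(\rho,\vecz_n,M)$.

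Because Section~\ref{estPREC:sec} establishes only necessity of \eqref{shscpreccond:eq} for $\prec^+$-optimality in general, the last step needs a direct sufficiency argument tailored to this setting. Suppose a nonnegative unbiased $\hat{g}$ has strictly lower variance than $\hat{f}^{(U)}$ on some $\vecv$; the task is to produce $\vecw$ with $f(\vecw) > f(\vecv)$ on which $\hat{g}$ has strictly higher variance. The key leverage is that $\int_\rho^1 \hat{f}^{(U)}(u,\vecv)du$ is maximal over in-range solutions of \eqref{inporange}: at the supremum $\rho$ of outcomes $S(u,\vecv)$ where $\hat{g}$ diverges downward from $\hat{f}^{(U)}$, the resulting deficit in $\hat{g}$'s cumulative integral is transported, via the in-range constraint of Theorem~\ref{domshared:thm} together with \eqref{UGcond}, into a forced violation of in-range at some less-informative outcome consistent with a high-$f$ vector $\vecw$, yielding the desired strict inferiority on $\vecw$ and completing the $\prec^+$-optimality conclusion in conjunction with Lemma~\ref{varpprecp:lemma}.
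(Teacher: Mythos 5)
Your overall skeleton matches the paper's: the paper's entire proof is the single claim that under \eqref{UGcond} the defining equation \eqref{Udefeq:eq} of $\hat{f}^{(U)}$ coincides with the order-optimality equation \eqref{shscpreccond:eq}, with $\prec^+$-optimality then inherited from the Section~\ref{estPREC:sec} machinery. However, the way you propose to establish that coincidence has a genuine gap. First, \eqref{UGcond} directly controls only the $\inf$-of-$\sup$s side of \eqref{supinfdefpreclim}; it says nothing by itself about the $\sup$-of-$\inf$s side, so your assertion that both collapse to $\sup_{\vecz\in S^*}\underline{f}(\eta,\vecz)$ (i.e.\ that the $\prec$-limit in \eqref{pwconv} exists and equals the envelope) is stated, not proved. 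Second, and more seriously, your plan for the minimax step --- choose $\vecz_n\in S^*$ with $f(\vecz_n)\rightarrow\overline{f}(S)$ whose lower-bound functions approach the envelope and conclude that $\lambda(\rho,\vecz_n,M)$ approaches $\inf_{\eta}\bigl(\sup_{\vecz}\underline{f}(\eta,\vecz)-M\bigr)/(\rho-\eta)$ --- does not follow from pointwise approximation: the minimizing $\eta$ for $\vecz_n$ can sit precisely where $\underline{f}(\eta,\vecz_n)$ drops to $M$, so the slope-infimum need not converge even when the lower-bound functions do (for instance, functions of the shape $\max\{0,\overline{f}(S)-1/n-\eta\}$ converge pointwise to $\overline{f}(S)-\eta$ while every $\lambda(\rho,\vecz_n,0)$ is $0$). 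Your claim that ``the minimizing $\eta$ stays in a bounded region'' is exactly the missing argument, and it needs structural input beyond \eqref{UGcond}; this is the step the paper leaves implicit.

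The closing sufficiency argument is also not sound as written. An in-range violation, via Theorem~\ref{domshared:thm}, yields only non-admissibility on a set $Z$ --- the existence of some dominating extension --- not that your competitor $\hat{g}$ has strictly higher variance than $\hat{f}^{(U)}$ on a specific preceding vector $\vecw$; and ``transporting the deficit'' from the first downward divergence to a less-informative outcome cannot work, since on less-informative outcomes (larger $u$) the two estimators may simply agree. What actually supplies sufficiency in the paper is the greedy construction of Lemma~\ref{discssprecopt} (processing $\prec$-minimal representatives and applying the $\vecv$-optimal extension of Theorem~\ref{voptlh} at each step), which is why the paper immediately notes that for $\range_p$ and $\range_{p+}$ the conditions of Lemma~\ref{discssprecopt} are satisfied; in continuous generality the paper explicitly leaves the sufficiency question open. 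So your write-up shares the paper's first step, but the two places where you go beyond the paper's one-line assertion --- the $\sup\inf=\inf\sup$ exchange and the direct sufficiency argument --- are precisely the parts that require proof, and as sketched neither goes through.
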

\begin{proof}
We can show that when \eqref{UGcond} holds then \eqref{Udefeq:eq}
is the same as \eqref{shscpreccond:eq}.
\end{proof}

The condition \eqref{UGcond} is satisfied by $\range_p$ and $\range_{p+}$.  
In this case, the
conditions of Lemma~\ref{discssprecopt} are also satisfied and thus
the \U\ estimator is $\prec^+$ optimal.

\ignore{
\begin{lemma}
 When $f$ satisfies\eqref{UGcond}
$\hat{f}^{(U)}$ is
$\prec^+$-optimal
estimator, for $\prec$ satisfying
$\vecz\prec \vecv \iff f(\vecz) > f(\vecv)$.
\end{lemma}
\begin{proof}
When \eqref{UGcond} holds then \eqref{Udefeq:eq}
is the same as \eqref{shscpreccond:eq}.
\end{proof}
}


\section{Conclusion} \label{conclu:sec}


We take  an optimization  approach to the 
derivation of estimators, targeting both 
worst-case and common-case variance. 
We explore this for monotone sampling, deriving novel and powerful estimators. 

 An interesting open question for monotone sampling is bounding 
the universal ratio: What is the lowest ratio we can guarantee on any 
monotone estimation problem for which there is an estimator with 
finite variances ?   Our L$^*$ estimator show that the ratio is at 
most $4$.  On the other end, one can construct examples where the 
ratio is at least 1.4.  We partially address this problem in follow-up work. 
 
 Another natural question is to find efficient constructions of estimators with 
{\em instance optimal} competitive ratio.  This question is 
interesting even in the context of specific functions 
(such as exponentiated range, which facilitates $L_p$ difference 
estimation \cite{sdiff_arxiv:2014}). 


Our general treatment of arbitrary functions 
facilitates the design of automated tools 
which derive estimators according to specifications. 
Our motivating applications of monotone sampling are for the important
special case of coordinated shared-seed 
sampling.  We expect more applications in that domain, but also
believe that the monotone sampling formulation will find further
applications in pattern  recognition, and plan to explore this in
future work.
Beyond monotone sampling, we hope that the foundations we provided can 
lead to a better understanding of other sampling schemes, and better estimators.  In 
particular, the projection on a single item of {\em independent}
(rather than coordinated) PPS or bottom-$k$ samples of instances is 
essentially an extended monotone estimation problem with $r$
independent seeds instead of a single seed.

Lastly,  the relevance of our work for the analysis of massive data 
sets is demonstrated (in follow-up work) on two basic 
problems:

\begin{trivlist}
\item {\bf Estimating $L_p$ difference from sampled data 
\cite{sdiff_arxiv:2014}:}
Extending Example \ref{example4}, we derive closed for expressions 
for 
$\range_p$ estimators and their variances, with focus on $p=1,2$. 
We estimate $L_p$ as the $p$th roots of sums of our \L\ and \U\ estimators for exponentiated range 
functions $\range_p$ ($p>0$).   These estimators, for $L_1$ and $L_2$,
were applied to samples of data sets with different characteristics: IP flow 
records exhibited larger differences between bandwidth usage assumed 
by a flow key (IP source destination pair, port, and protocol) in 
different times.  The surnames dataset (frequencies of surnames in 
published books in different years) had more similar values. 
Accordingly, the \U\ estimator, which is optimized for large differences 
dominated on the IP flow records dataset whereas the \L\ estimator 
dominated on the surnames dataset.  This demonstrates the potential 
value in selecting a custom estimator.   The \L\ estimator, however,
which is competitive (the ratio turns out to be 2.5 for $L_1$ and 2 
for $L_2$), never exceedingly underperformed  the \U\
estimator, whereas the \U\ estimator could perform much worse than the 
\L.   This shows the value of variance competitiveness and selecting 
a competitive estimator when there is no understanding of 
patterns in data. For the $L_1$ and $L_2$ differences, we also computed (via a program)  the optimally 
competitive estimator. 
Prior to our work, there were no good 
estimators for L$_p$ differences over coordinated samples for any 
 $p\not=1$. Only a non-optimal 
estimator was known  for $L_1$ \cite{multiw:VLDB2009} and 
for the special case of 0/1 
values for the related Jaccard coefficient 
\cite{Broder:CPM00,BRODER:sequences97}.  Our 
study demonstrates that we obtain accurate estimates even 
when only a small fraction of entries is sampled. 

\item 
{\bf Sktech-based similarity estimation in social networks 
  \cite{CDFGGW:COSN2013}:}
We applied our \L\
estimator to obtain sketch-based closeness similarities between nodes in social networks. 
  As mentioned in the introduction, 
a set of all-distances sketches (ADS) can be computed for all nodes 
in near-linear time \cite{ECohen6f}.  The ADS of 
a node $v$  is essentially a sample of other nodes, where a node $u$
is included with probability inversely proportional to 
their Dijkstra rank (neighbor rank) with respect to $v$.  
As mentioned, ADSs of  different nodes are coordinated samples. 
 Closeness similarity \cite{CDFGGW:COSN2013}  between nodes measures 
the similarity of their distance relation to other nodes:
$$\text{sim}(u,v) = \frac{\sum_i \alpha(\max\{d_{vi},d_{ui}\})}{\sum_i 
\alpha(\min\{d_{vi},d_{ui}\})}\ ,$$ where $\alpha$ is non-increasing. 
To estimate closeness similarity of $u$ and $v$ from ADS$(u)$ and 
ADS$(v)$,
we use the HIP inclusion probabilities (which with conditioning allow 
us to consider one item at a time) \cite{ECohenADS:PODS2014}. We then applied 
the \L\ estimator to estimate, for each node $i$, 
$\alpha(\min\{d_{vi},d_{ui}\})$.  These unbiased nonnegative estimates 
were then added up to obtain an estimate for the sum. 
\end{trivlist}





\subsection*{Acknowledgement}
The author would like to thank Micha Sharir for his help in the proof 
of Lemma~\ref{Lestbound}. 
The author 
is grateful to Haim Kaplan for many comments and helpful feedback.

{\small 
\bibliographystyle{plain}
\bibliography{cycle} 
}


\end{document}